\title[Computation of annular capacity by Hamiltonian Floer theory]%
	{Computation of annular capacity by Hamiltonian Floer theory of non-contractible periodic trajectories}
\author{Morimichi Kawasaki} 
\address[Morimichi Kawasaki]{Center for Geometry and Physics, Institute for Basic Science (IBS), Pohang 790-784, Republic of Korea}
\email{kawasaki@ibs.re.kr}
\author{Ryuma Orita} 
\address[Ryuma Orita]{Graduate School of Mathematical Sciences, the University of Tokyo, Tokyo 153-0041, Japan}
\email{orita@ms.u-tokyo.ac.jp}
\urladdr{https://sites.google.com/site/oritaryuma/}
\subjclass[2010]{53D40, 37J10, 37J45}
\newtheorem{theorem}{Theorem}[section]
\newtheorem{lemma}[theorem]{Lemma}
\newtheorem{proposition}[theorem]{Proposition}
\newtheorem{conjecture}[theorem]{Conjecture}
\newtheorem{step}{Step}
\newtheorem{claim}{Claim}
\theoremstyle{definition}
\newtheorem{definition}[theorem]{Definition}
\theoremstyle{remark}
\newtheorem{remark}[theorem]{Remark}
\newcommand{\grad}{\mathop{\mathrm{grad}}\nolimits}
\newcommand{\Ker}{\mathop{\mathrm{Ker}}\nolimits}
\newcommand{\Image}{\mathop{\mathrm{Im}}\nolimits}
\newcommand{\SH}{\mathop{\mathrm{SH}}\limits_{\longleftarrow}}
\newcommand{\RSH}{\mathop{\mathrm{SH}}\limits_{\longrightarrow}}
\newcommand{\relmiddle}[1]{\mathrel{}\middle#1\mathrel{}}
\begin{document}

\maketitle


\begin{abstract}
The first author \cite{Ka} introduced a relative symplectic capacity $C$ for a symplectic manifold $(N,\omega_N)$
and its subset $X$ which measures the existence of non-contractible periodic trajectories
of Hamiltonian isotopies on the product of $N$ with the annulus $A_R=(-R,R)\times\mathbb{R}/\mathbb{Z}$.
In the present paper, we give an exact computation of the capacity $C$ of the $2n$-torus $\mathbb{T}^{2n}$ relative to
a Lagrangian submanifold $\mathbb{T}^n$ which implies the existence of non-contractible Hamiltonian periodic trajectories
on $A_R\times\mathbb{T}^{2n}$.
Moreover, we give a lower bound on the number of such trajectories.
\end{abstract}

\tableofcontents

\section{Introduction}

Let $(N,\omega_N)$ be a closed connected symplectic manifold and $X\subset N$ a compact subset.
For $R>0$ let $A_R$ denote the annulus $(-R,R)\times\mathbb{R}/\mathbb{Z}$,
and for $u\in(-R,R)$ we define its subset $L_u=\{u\}\times\mathbb{R}/\mathbb{Z}\subset A_R$.
For $\ell\in\mathbb{Z}$ we put
\[
	\alpha_{\ell}=[t\mapsto (0,\ell t)]\in [S^1,A_R].
\]
We consider the product symplectic manifold $\bigl(A_R\times N,(dp_0\wedge dq_0)\oplus\omega_N\bigr)$
and the free homotopy class $(\alpha_{\ell},0_N)\in[S^1,A_R\times N]$,
where $(p_0,q_0)\in A_R=(-R,R)\times\mathbb{R}/\mathbb{Z}$ and $0_N\in[S^1,N]$ is the free homotopy class of trivial loops in $N$.
Our main result is Theorem \ref{theorem}.
We consider $(N,\omega_N)=(\mathbb{T}^{2n},\omega_{\mathrm{std}})=%
\bigl((\mathbb{R}/2\mathbb{Z}\times\mathbb{R}/\mathbb{Z})^n,\omega_{\mathrm{std}}\bigr)$
and $\mathbb{T}^n$ stands for $(\{0\}\times\mathbb{R}/\mathbb{Z})^n$
where $\omega_{\mathrm{std}}$ is the standard symplectic form on $\mathbb{T}^{2n}$.

\begin{theorem}\label{theorem}
Let $R>0$ and $u$ be real numbers such that $u\in(-R,R)$.
For any smooth Hamiltonian $H\colon [0,1]\times A_R\times \mathbb{T}^{2n}\to\mathbb{R}$ with compact support
and any $\ell\in\mathbb{Z}$ such that
\[
	\max\{R\lvert\ell\rvert+u\ell,0\}\leq c=\inf_{[0,1]\times L_u\times \mathbb{T}^n}H,
\]
there exists a Hamiltonian periodic trajectory $x$ in the homotopy class $(\alpha_{\ell},0_{\mathbb{T}^{2n}})\in[S^1,A_R\times\mathbb{T}^{2n}]$ with action $\mathcal{A}_H(x)\geq c-u\ell$,
where $\mathcal{A}_H$ is the action functional defined in Section \ref{section:2}.
Moreover, if $H$ is non-degenerate, then the number of such $x$'s is at least
\[
	b_{\mathbb{T}^{n+1}}=\sum_{k=0}^{n+1}b_k(\mathbb{T}^{n+1})=\sum_{k=0}^{n+1}\dim_{\mathbb{Z}/2\mathbb{Z}}\left(H_k(\mathbb{T}^{n+1};\mathbb{Z}/2\mathbb{Z})\right).
\]
\end{theorem}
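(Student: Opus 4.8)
The plan is to derive both assertions from one statement about the action-filtered Hamiltonian Floer homology of $A_R\times\mathbb{T}^{2n}$ in the free homotopy class $\alpha=(\alpha_\ell,0_{\mathbb{T}^{2n}})$ (built in Section~\ref{section:2}): for every $\varepsilon>0$,
\[
\dim_{\mathbb{Z}/2\mathbb{Z}}HF^{(c-u\ell-\varepsilon,\,+\infty)}_*\bigl(H;\alpha\bigr)\ \geq\ b_{\mathbb{T}^{n+1}}.
\]
Granting this, observe that since $H$ has compact support the $1$-periodic orbits of $H$ in class $\alpha$ of action $>c-u\ell-\varepsilon$ lie in a fixed compact set, hence when $H$ is non-degenerate there are finitely many of them; as $HF^{(a,+\infty)}_*$ is computed by the quotient complex generated by the orbits of action $>a$, the displayed inequality produces $\geq b_{\mathbb{T}^{n+1}}$ such orbits for every $\varepsilon>0$, and taking $\varepsilon$ smaller than the gap below $c-u\ell$ gives $\geq b_{\mathbb{T}^{n+1}}$ orbits of action $\geq c-u\ell$. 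This is the second conclusion and, in particular, the first one for non-degenerate $H$; the general case of the first conclusion follows by a limiting argument (add a small non-negative bump supported near $L_u\times\mathbb{T}^n$ to keep the inequality on $c$ strict, then a smaller non-degenerate perturbation, apply the above, and extract a limit orbit by Gromov compactness as the perturbations $\to 0$).

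To prove the displayed inequality I would compute with a model and then transfer. Write $\mathbb{T}^{2n}=(\mathbb{R}/2\mathbb{Z}\times\mathbb{R}/\mathbb{Z})^n$ with coordinates $(x_1,\dots,x_n,y_1,\dots,y_n)$, so that $\mathbb{T}^n=\{x=0\}$, and consider the autonomous Hamiltonian
\[
G(p_0,q_0,x,y)=f(p_0)+\phi(x_1,\dots,x_n),
\]
where $f$ is compactly supported in $(-R,R)$ and $\phi$ is a $C^2$-small Morse function on $(\mathbb{R}/2\mathbb{Z})^n$ with minimum at $x=0$. The inequality $c\geq\max\{R|\ell|+u\ell,0\}$ is exactly the condition under which one can choose such an $f$ with $f(u)$ as close to $c$ as desired and with a transverse zero $p_*$ of $f'-\ell$ satisfying $f(p_*)-\ell p_*\geq c-u\ell$ (for $\ell>0$ let $f$ climb from $0$ with average slope $\geq\ell$ over $(-R,u)$ and then descend to $0$; the cases $\ell<0$ and $\ell=0$ are symmetric). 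One then arranges $\|\phi\|$ small enough that these estimates survive up to $\varepsilon$.

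The $1$-periodic orbits of $G$ in class $\alpha$ occur in Morse--Bott families indexed by pairs consisting of a critical point $x^{(j)}$ of $\phi$ and a zero $p_*$ of $f'-\ell$:
\[
\bigl\{(p_*,\,q_0(0)+\ell t,\,x^{(j)},\,y)\ :\ q_0(0)\in\mathbb{R}/\mathbb{Z},\ y\in(\mathbb{R}/\mathbb{Z})^n\bigr\}\ \cong\ \mathbb{T}^{n+1},
\]
on which $\mathcal{A}_G$ is constant and equals $f(p_*)-\ell p_*+\phi(x^{(j)})$. Since $G$ does not involve the $y_i$, the Lagrangian directions of $\mathbb{T}^n$ persist and, together with the $q_0$-circle, form the torus $\mathbb{T}^{n+1}$; the family at $(p_*,x^{(j)})=(p_*,0)$ has image the Lagrangian torus $L_{p_*}\times\mathbb{T}^n$, a translate of $L_u\times\mathbb{T}^n$, and action $\geq c-u\ell$. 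A standard Morse--Bott computation (or a further small non-degenerate perturbation of $G$) shows that this distinguished family contributes a copy of $H_*(\mathbb{T}^{n+1};\mathbb{Z}/2\mathbb{Z})$, of total dimension $b_{\mathbb{T}^{n+1}}$, to $HF^{(c-u\ell-\varepsilon,+\infty)}_*(G;\alpha)$.

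The remaining, and hardest, step is to transfer this $b_{\mathbb{T}^{n+1}}$-dimensional contribution from $G$ to $H$. The naive route --- a monotone continuation map $HF^{(c-u\ell-\varepsilon,+\infty)}_*(G;\alpha)\to HF^{(c-u\ell-\varepsilon,+\infty)}_*(H;\alpha)$ injective on the distinguished summand --- is not directly available because $H$ need not dominate $G$ (it may be negative away from $L_u\times\mathbb{T}^n$), so bare continuation/spectral-number estimates do not suffice. What should make the transfer work is that the distinguished summand is carried by the Lagrangian torus $L=L_u\times\mathbb{T}^n$, which bounds no holomorphic disk in $A_R\times\mathbb{T}^{2n}$ (indeed $\pi_2(A_R\times\mathbb{T}^{2n},L)=0$), so its Lagrangian Floer homology is $H_*(\mathbb{T}^{n+1};\mathbb{Z}/2\mathbb{Z})\neq0$ and is stable under Hamiltonian perturbation, while the hypothesis $H\geq c$ on $L$ pins the corresponding (Lagrangian) spectral numbers at $\geq c$, equivalently the resulting periodic-orbit classes at action $\geq c-u\ell$. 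I would therefore carry out the transfer by combining a Lagrangian-control estimate for $L_u\times\mathbb{T}^n$ with an open--closed-type comparison to Hamiltonian Floer homology in class $\alpha$ (alternatively, by sandwiching $H$ between $G$ and a large model and tracking the induced maps on the filtered groups). The main obstacle is exactly this non-annihilation of the $b_{\mathbb{T}^{n+1}}$ classes of $L_u\times\mathbb{T}^n$ through the comparison while preserving the action bound $c-u\ell$; the analytic prerequisites (transversality and $C^0$-compactness for Floer cylinders on the non-compact $A_R\times\mathbb{T}^{2n}$, and the Conley--Zehnder index bookkeeping for the non-contractible class) are those established in Section~\ref{section:2}.
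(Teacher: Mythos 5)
Your overall skeleton matches the paper's: a model Hamiltonian concentrated near $L_u\times\mathbb{T}^n$ whose orbits in class $\hat\alpha_\ell$ form Morse--Bott tori $\mathbb{T}^{n+1}$ of action close to $c-u\ell$, Po\'zniak's theorem to read off $H_*(\mathbb{T}^{n+1};\mathbb{Z}/2\mathbb{Z})$, and then a transfer of this $b_{\mathbb{T}^{n+1}}$-dimensional contribution to the given $H$. You also correctly diagnose where the difficulty lies. But the proof has a genuine gap precisely at the step you flag as ``the remaining, and hardest, step'': you do not actually carry out the transfer, and the two routes you gesture at (a Lagrangian/open--closed comparison, or a sandwich argument) are left entirely undeveloped. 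This transfer is not a routine technicality here --- it is the main content of the paper. The structural device that makes the sandwich work is Proposition \ref{proposition:diagram}: the map $T_{\hat\alpha_\ell}^{[a,\infty);c}\colon{\SH}^{[a,\infty)}\to{\RSH}^{[a,\infty);c}$ factors as $\iota_H\circ\pi_H$ through $\mathrm{HF}^{[a,\infty)}(H;\hat\alpha_\ell)$ for \emph{every} $H$ with $\inf_{S^1\times L_u\times\mathbb{T}^n}H>c$, because ${\SH}$ is an inverse limit over all compactly supported Hamiltonians and ${\RSH}^{;c}$ is a direct limit over those bounded below by $c$ on $L_u\times\mathbb{T}^n$; so one never needs $H$ to dominate or be dominated by a specific model, only the non-vanishing of $T$ itself. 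Proving that non-vanishing (with image of dimension $b_k(\mathbb{T}^{n+1})$ in each degree) is Theorem \ref{theorem:main2}, and it cannot be done by a soft isomorphism argument because, as the authors stress, $T$ is \emph{not} an isomorphism here: the source is $H_*(\mathbb{T}^{2n+1})$ and the target $H_*(\mathbb{T}^{n+1})$. The paper's argument requires the auxiliary Morse functions $F_{\pm T}$, an intermediate Hamiltonian $\widetilde H$ with no orbits in an action window $(a,b)$, and a diagram chase through the long exact sequences of the triple $a<b<\infty$ (Claims \ref{claim:1}--\ref{claim:6}), culminating in the computation $\mathrm{HF}_k^{[a,b)}(\widetilde H_T)\cong(\mathbb{Z}/2\mathbb{Z})^{b_k(\mathbb{T}^{2n+1})-b_k(\mathbb{T}^{n+1})}$ which shows $[\iota^F]_k$ is not surjective. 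None of this is present or replaceable by the Lagrangian Floer sketch as stated (which would in any case need its own open--closed comparison theorem with action control, a result not available off the shelf in this setting).

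A secondary, smaller point: even granting the Floer-homological lower bound, deducing the sharp threshold $c\geq\max\{R|\ell|+u\ell,0\}$ (rather than a strict inequality) requires the boundary-case statement of Proposition \ref{proposition:hat} (\cite[Proposition 4.9.1]{BPS}); your limiting argument for degenerate $H$ is fine in spirit but should be routed through that proposition rather than through an ad hoc Gromov compactness argument, since the latter does not by itself preserve the count of $b_{\mathbb{T}^{n+1}}$ orbits in the non-degenerate case at the exact level $c$.
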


The result in Theorem \ref{theorem} is sharp in the sense that for any $\epsilon>0$
there exists a Hamiltonian $H\colon [0,1]\times A_R\times \mathbb{T}^{2n}\to\mathbb{R}$
with $\inf_{[0,1]\times L_u\times \mathbb{T}^n}H=\max\{R\lvert\ell\rvert+u\ell,0\}-\epsilon$
without periodic trajectory in $(\alpha_{\ell},0_{\mathbb{T}^{2n}})$
(see the proof of Theorem \ref{theorem:main3} in Subsection \ref{subsection:proofofmain}).

To obtain Theorem \ref{theorem}, we will prove the non-zeroness of the homomorphism $T_{\alpha}^{[a,\infty);c}$ which is defined in Subsection \ref{subsection:symp}.
To prove the non-zeroness, we use Po\'{z}niak's theorem (Theorem \ref{theorem:Pozniak}) several times.
Biran, Polterovich and Salamon \cite{BPS}, Niche \cite{Ni} and Xue \cite{Xu} also used
Po\'{z}niak's theorem to get an upper bound of the Biran--Polterovich--Salamon capacity.
In their papers, the homomorphism $T_{\alpha}^{[a,\infty);c}$ is an isomorphism.
However in our case, $T_{\alpha}^{[a,\infty);c}$ is not an isomorphism.
Thus we need more sophisticated arguments.

We define a capacity $C$ introduced by the first author in \cite{Ka}
in terms of the Biran--Polterovich--Salamon capacity \cite[Subsection 3.2]{BPS} (see also \cite{Ni,We,Xu}).
Let $(M,\omega)$ be an open symplectic manifold and $A\subset M$ a compact subset.
For $\alpha\in [S^1,M]$ and $a\geq -\infty$ we define the \textit{Biran--Polterovich--Salamon capacity} $C_{\mathrm{BPS}}$ by
\[
	C_{\mathrm{BPS}}(M,A;\alpha,a)=%
	\inf\{\, c>0\mid \forall H\in\mathcal{H}_c(M,A)\ \exists x\in\mathcal{P}(H;\alpha)\ %
	\text{s.t.}\ \mathcal{A}_H(x)\geq a\,\}%
	\geq 0,
\]
where $\mathcal{H}_c(M,A)$ is the set of time-dependent Hamiltonian functions
$H\colon S^1\times M\to\mathbb{R}$ with compact support such that
\[
	\inf_{S^1\times A}H\geq c,
\]
and $\mathcal{P}(H;\alpha)$ is the set of periodic trajectories
of the Hamiltonian isotopy associated to $H$ representing $\alpha$.
For $R>0$, $u\in(-R,R)$, $\ell\in\mathbb{Z}$ and $a\geq -\infty$,
we then define a relative symplectic capacity $C(N,X;R,u,\ell,a)$ by
\[
	C(N,X;R,u,\ell,a)=C_{\mathrm{BPS}}(A_R\times N,L_u\times X;(\alpha_{\ell},0_N),a).
\]
By using the capacity $C$,
we can rewrite Theorem \ref{theorem} as Theorem \ref{theorem:main3} (see Subsection \ref{subsection:proofofmain} for details).

\begin{theorem}\label{theorem:main3}
For any $R>0$, $u\in(-R,R)$, $\ell\in\mathbb{Z}$ and $a\geq -\infty$, we have
\[
	C(\mathbb{T}^{2n},\mathbb{T}^n;R,u,\ell,a)=\max\{R\lvert\ell\rvert+u\ell,a+u\ell\}.
\]
\end{theorem}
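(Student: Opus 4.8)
The plan is to prove Theorem~\ref{theorem:main3} by establishing the two inequalities
\[
	C(\mathbb{T}^{2n},\mathbb{T}^n;R,u,\ell,a)\geq\max\{R\lvert\ell\rvert+u\ell,a+u\ell\}
\]
and
\[
	C(\mathbb{T}^{2n},\mathbb{T}^n;R,u,\ell,a)\leq\max\{R\lvert\ell\rvert+u\ell,a+u\ell\}
\]
separately. For the lower bound, which is the ``sharpness'' direction, I would construct for each $\epsilon>0$ an explicit compactly supported Hamiltonian $H$ on $[0,1]\times A_R\times\mathbb{T}^{2n}$ with $\inf_{[0,1]\times L_u\times\mathbb{T}^n}H$ close to (but just below) $\max\{R\lvert\ell\rvert+u\ell,a+u\ell\}$ that has \emph{no} periodic trajectory in the class $(\alpha_\ell,0_{\mathbb{T}^{2n}})$ with action $\geq a$. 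The natural candidate is a Hamiltonian depending only on the $p_0$-coordinate of the annulus, say $H=f(p_0)$ on the relevant region: its Hamiltonian vector field is $f'(p_0)\,\partial_{q_0}$, so the periodic trajectories in class $\alpha_\ell$ sit on circles $\{p_0=\text{const}\}$ where $f'(p_0)=-\ell$ (with the sign convention fixed in Section~\ref{section:2}), and their actions are computed directly from $f$ and the winding. Choosing the profile of $f$ so that this slope is attained only where the action is $<a$, while keeping $\inf H$ on $L_u\times\mathbb{T}^n$ as large as allowed, gives the desired example; the bookkeeping of the $u\ell$ shift and the compact-support cutoff is routine but must be done carefully to land exactly on $R\lvert\ell\rvert+u\ell$ for the first term and on $a+u\ell$ for the second.

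For the upper bound I would invoke Theorem~\ref{theorem} (the main theorem of the paper, already proved by the non-vanishing of $T_\alpha^{[a,\infty);c}$ and repeated use of Po\'zniak's theorem). Concretely, suppose $H\in\mathcal{H}_c(A_R\times\mathbb{T}^{2n},L_u\times\mathbb{T}^n)$, i.e. $c\leq\inf_{[0,1]\times L_u\times\mathbb{T}^n}H$. If $c>\max\{R\lvert\ell\rvert+u\ell,a+u\ell\}$, then in particular $c\geq\max\{R\lvert\ell\rvert+u\ell,0\}$ (using $a\geq-\infty$, so $a+u\ell$ can be taken $\geq 0$ or the bound is vacuous there), so Theorem~\ref{theorem} applies and produces a periodic trajectory $x$ in class $(\alpha_\ell,0_{\mathbb{T}^{2n}})$ with $\mathcal{A}_H(x)\geq c-u\ell>a$. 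Hence every such $H$ admits a trajectory with action $\geq a$, which by definition of $C_{\mathrm{BPS}}$ forces $C(\mathbb{T}^{2n},\mathbb{T}^n;R,u,\ell,a)\leq\max\{R\lvert\ell\rvert+u\ell,a+u\ell\}$. One subtlety is the boundary case where $c$ equals the max rather than exceeding it, and the $a=-\infty$ case; these are handled by taking infima/limits and noting $C_{\mathrm{BPS}}$ is defined as an infimum over $c>0$, so strict inequalities suffice.

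The translation between Theorem~\ref{theorem} and Theorem~\ref{theorem:main3} is the remaining ingredient: I would spell out that ``$\forall H$ with $\inf_{L_u\times\mathbb{T}^n}H\geq c$ there is an $x$ with $\mathcal{A}_H(x)\geq a$'' is exactly the defining condition whose infimal $c$ is $C_{\mathrm{BPS}}$, and that Theorem~\ref{theorem}'s hypothesis $\max\{R\lvert\ell\rvert+u\ell,0\}\leq c$ together with its conclusion $\mathcal{A}_H(x)\geq c-u\ell$ yields precisely the claimed formula once one checks that $c-u\ell\geq a$ is equivalent to $c\geq a+u\ell$. I expect the main obstacle to be not any deep argument here — the hard analytic work is packaged in Theorem~\ref{theorem} — but rather getting every sign and additive constant right in the lower-bound example, since the value $\max\{R\lvert\ell\rvert+u\ell,a+u\ell\}$ must be matched on the nose from both sides, and the presence of the two competing terms (the ``width'' term $R\lvert\ell\rvert+u\ell$ and the ``action-threshold'' term $a+u\ell$) means the extremal Hamiltonian must be designed to saturate whichever term dominates, with a smooth interpolation when they are comparable.
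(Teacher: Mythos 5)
Your two-inequality strategy is exactly the paper's: the lower bound is obtained there by the same explicit construction $H(p,q)=f(p_0)$ with $f$ shaped so that the slope-$\ell$ locus only occurs where the action $f(r)-r\ell$ falls below $a$ (note the paper's convention gives $\dot q_0=f'(p_0)$, so the relevant condition is $f'(r)=\ell$, not $-\ell$), and the upper bound comes from the Floer-homological non-vanishing of $T_{\hat\alpha_\ell}^{[s,\infty);c}$. The one point you must repair is a circularity: you invoke Theorem \ref{theorem} to get the upper bound, but in the paper Theorem \ref{theorem} is itself \emph{deduced from} Theorem \ref{theorem:main3} (by setting $a=c-u\ell$), so it is not available at this stage. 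The fix is exactly what the paper does: use Theorem \ref{theorem:main2} to compute the homological capacity $\widehat{C}(\mathbb{T}^{2n},\mathbb{T}^n;R,u,\ell,a)=\max\{R\lvert\ell\rvert+u\ell,a+u\ell\}$ (Proposition \ref{proposition:capacity}), and then apply the Biran--Polterovich--Salamon comparison $\widehat{C}\geq C$ (Proposition \ref{proposition:hat}) to get the upper bound directly, rather than routing through Theorem \ref{theorem}. Since you already identify the non-vanishing of $T_{\alpha}^{[a,\infty);c}$ as the substantive input, this is a reordering of the deduction rather than a missing idea, but as written your argument would not stand inside the paper's logical architecture.
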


After the first draft of this paper was completed, Ishiguro \cite{Is} pointed out the following proposition.

\begin{proposition}[{\cite[Proposition 5.1]{Is}}]
For any $R>0$, $u\in(-R,R)$, $\ell\in\mathbb{Z}$, $a\geq -\infty$ and $\beta\neq0\in[S^1,\mathbb{T}^{2n}]$, we have
\[
	C_{\mathrm{BPS}}(A_R\times \mathbb{T}^{2n},L_u\times \mathbb{T}^n;(\alpha_\ell,\beta),a)=\infty.
\]
\end{proposition}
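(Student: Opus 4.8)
\emph{Plan of proof.} Recall that $C_{\mathrm{BPS}}(M,A;\alpha,a)$ is by definition the infimum of the set $S$ of those $c>0$ for which \emph{every} $H\in\mathcal{H}_c(M,A)$ admits some $x\in\mathcal{P}(H;\alpha)$ with $\mathcal{A}_H(x)\geq a$; hence $C_{\mathrm{BPS}}=\infty$ exactly when $S=\emptyset$, i.e.\ when for every $c>0$ there exists $H\in\mathcal{H}_c(M,A)$ for which no $x\in\mathcal{P}(H;\alpha)$ satisfies $\mathcal{A}_H(x)\geq a$. The plan is to establish this for $M=A_R\times\mathbb{T}^{2n}$, $A=L_u\times\mathbb{T}^n$ and $\alpha=(\alpha_\ell,\beta)$ with $\beta\neq0$ by exhibiting, for each $c>0$, a compactly supported Hamiltonian that is large on $L_u\times\mathbb{T}^n$ but has \emph{no} $1$-periodic trajectory in the class $(\alpha_\ell,\beta)$ whatsoever; the requirement ``$\exists\, x\in\mathcal{P}(H;(\alpha_\ell,\beta))$ with $\mathcal{A}_H(x)\geq a$'' then fails vacuously for every $a\geq-\infty$.

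First I would fix $c>0$ and choose a smooth function $f\colon(-R,R)\to\mathbb{R}$ with compact support and $f(u)\geq c$ (possible since $u\in(-R,R)$; e.g.\ a scaled bump function centred at $u$), and set
\[
	H\colon S^1\times A_R\times\mathbb{T}^{2n}\to\mathbb{R},\qquad H(t,(p_0,q_0),z)=f(p_0).
\]
This $H$ is smooth and compactly supported, is independent of $t$ and of $z\in\mathbb{T}^{2n}$, and satisfies $\inf_{S^1\times L_u\times\mathbb{T}^n}H=f(u)\geq c$, so $H\in\mathcal{H}_c(A_R\times\mathbb{T}^{2n},L_u\times\mathbb{T}^n)$.

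Next I would identify the $1$-periodic trajectories of $H$. For the symplectic form $(dp_0\wedge dq_0)\oplus\omega_{\mathrm{std}}$ the Hamiltonian vector field is $X_H=f'(p_0)\,\partial_{q_0}$ (with the sign convention of Section~\ref{section:2}), so its flow fixes $p_0$ and $z$ and translates $q_0$ at constant speed $f'(p_0)$. Therefore every $1$-periodic trajectory of $H$ has the form $t\mapsto\bigl((p_0,q_0+f'(p_0)t),z_0\bigr)$ with $f'(p_0)\in\mathbb{Z}$ and $z_0\in\mathbb{T}^{2n}$ constant, and hence represents the free homotopy class $(\alpha_{f'(p_0)},0_{\mathbb{T}^{2n}})\in[S^1,A_R\times\mathbb{T}^{2n}]$. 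Since $0_{\mathbb{T}^{2n}}\neq\beta$, none of these lies in $(\alpha_\ell,\beta)$, i.e.\ $\mathcal{P}(H;(\alpha_\ell,\beta))=\emptyset$. Thus $c\notin S$; as $c>0$ was arbitrary, $S=\emptyset$ and $C_{\mathrm{BPS}}(A_R\times\mathbb{T}^{2n},L_u\times\mathbb{T}^n;(\alpha_\ell,\beta),a)=\infty$.

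I expect no serious obstacle here: the only points requiring care are the convention that the infimum over the empty set is $+\infty$ (so that producing a single ``bad'' Hamiltonian for each $c$ suffices) and the sign of $X_H$ fixed in Section~\ref{section:2}, which is immaterial to the conclusion, since the $\mathbb{T}^{2n}$-component of every periodic trajectory of a Hamiltonian depending only on $p_0$ is a constant loop regardless of sign. Conceptually, the proposition records that — in contrast with Theorem~\ref{theorem}, where $\beta=0$ — a periodic trajectory non-contractible in the $\mathbb{T}^{2n}$-direction can never be forced, and a split Hamiltonian $f(p_0)$ already witnesses this.
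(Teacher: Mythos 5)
Your argument is correct. Note that the paper does not prove this proposition at all --- it is quoted from Ishiguro \cite[Proposition 5.1]{Is} --- so there is no in-text proof to compare against; your construction (a split Hamiltonian $H=f(p_0)$ with $f(u)\geq c$, whose periodic trajectories all have constant $\mathbb{T}^{2n}$-component and hence never represent $(\alpha_\ell,\beta)$ with $\beta\neq 0$, making the defining set of $C_{\mathrm{BPS}}$ empty) is the natural one and mirrors the split Hamiltonians the authors themselves use in the proof of Theorem \ref{theorem:main3}.
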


On the other hand, the first author essentially showed Theorem \ref{theorem:kawasaki2} in \cite{Ka}.

\begin{theorem}[{\cite[Theorem 1.2]{Ka}}]\label{theorem:kawasaki2}
For any $R>0$ and $\ell\in\mathbb{Z}$ we have
\[
	C(\mathbb{T}^{2n},\mathbb{T}^n;R,0,\ell,-\infty)\leq 2R\lvert \ell\rvert.
\]
\end{theorem}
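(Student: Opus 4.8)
The case $\ell=0$ is immediate: the class $(\alpha_0,0_{\mathbb{T}^{2n}})$ is trivial and is represented by a constant trajectory lying outside the compact set $\mathrm{supp}\,H$, so $C(\mathbb{T}^{2n},\mathbb{T}^n;R,0,0,-\infty)=0=2R\lvert 0\rvert$. Assume $\ell\neq0$; applying the symplectomorphism $(p_0,q_0,z)\mapsto(-p_0,-q_0,-z)$, which fixes $L_0\times\mathbb{T}^n$ and sends the class $(\alpha_\ell,0)$ to $(\alpha_{-\ell},0)$, we may take $\ell>0$. Unwinding the definition of $C_{\mathrm{BPS}}$, what has to be shown is: for every $c>2R\lvert\ell\rvert$ and every compactly supported $H\colon S^1\times A_R\times\mathbb{T}^{2n}\to\mathbb{R}$ with $\inf_{S^1\times L_0\times\mathbb{T}^n}H\geq c$, one has $\mathcal{P}\bigl(H;(\alpha_\ell,0_{\mathbb{T}^{2n}})\bigr)\neq\emptyset$.

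The plan is to run the Biran--Polterovich--Salamon scheme for the non-contractible class $(\alpha_\ell,0_{\mathbb{T}^{2n}})$, as in \cite{BPS,Ni,Xu}, adapted to the Lagrangian torus $\mathbb{T}^n\subset\mathbb{T}^{2n}$. One first sets up the action-filtered Hamiltonian Floer homology $HF^{[a,\infty)}_{\ast}\bigl(H;(\alpha_\ell,0_{\mathbb{T}^{2n}})\bigr)$ of the open manifold $A_R\times\mathbb{T}^{2n}$ (which is symplectically aspherical, so there is no sphere bubbling); since $H$ is compactly supported it is flat near $\partial A_R$, and a $C^0$-confinement estimate keeps the relevant Floer cylinders in a fixed compact set, so that continuation maps, invariance, and the comparison homomorphism $T_\alpha^{[a,\infty);c}$ are all available. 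Next one fixes a model $K=f(p_0)+g(a_1,\dots,a_n)$, where $(a_i,b_i)$ denote coordinates on the $i$-th factor $\mathbb{R}/2\mathbb{Z}\times\mathbb{R}/\mathbb{Z}$ of $\mathbb{T}^{2n}$: here $f$ is compactly supported in $A_R$, rising from $0$ near $p_0=-R$ to a value just below $c$ at $p_0=0$ and falling back to $0$ near $p_0=R$, while $g$ is a $C^1$-small Morse function of $a$ alone with a nondegenerate maximum along $\{a=0\}=\mathbb{T}^n$. Because $c>R\lvert\ell\rvert$, the mean value theorem gives $p_\ast\in(-R,0)$ with $f'(p_\ast)=\ell$; the $1$-periodic trajectories of $K$ in the class $(\alpha_\ell,0_{\mathbb{T}^{2n}})$ lying over the critical point $a=0$ sweep out a clean (Morse--Bott) torus of starting points, namely the product of the circle $\{p_0=p_\ast\}$ in $A_R$ with the $b$-torus $\mathbb{T}^n$, i.e.\ a copy of $\mathbb{T}^{n+1}$. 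By Po\'{z}niak's theorem (Theorem \ref{theorem:Pozniak}) this family contributes a copy of $H_\ast(\mathbb{T}^{n+1};\mathbb{Z}/2)\neq0$ to the model's filtered Floer homology for a suitable $a$ --- this is also the source of the Betti-number count $b_{\mathbb{T}^{n+1}}$ in Theorem \ref{theorem}. Finally one shows that the hypothesis $\inf_{S^1\times L_0\times\mathbb{T}^n}H\geq c$ forces $T_\alpha^{[a,\infty);c}$, which relates $HF^{[a,\infty)}_{\ast}(H;\,\cdot\,)$ to the model's filtered homology, to be an \emph{isomorphism} --- the situation being analogous to \cite{BPS,Ni,Xu}, where this homomorphism is an isomorphism --- so that $HF^{[a,\infty)}_{\ast}\bigl(H;(\alpha_\ell,0_{\mathbb{T}^{2n}})\bigr)\neq0$ and hence $\mathcal{P}\bigl(H;(\alpha_\ell,0_{\mathbb{T}^{2n}})\bigr)\neq\emptyset$.

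The main obstacle is this last comparison, and it is precisely here that the constant $2R\lvert\ell\rvert=\lvert\ell\rvert\cdot\mathrm{Area}(A_R)$ is forced: since $H$ is controlled only \emph{on} the Lagrangian $L_0\times\mathbb{T}^n$ and not on a neighbourhood of it, any monotone homotopy comparing $H$ with the model must traverse a full width-$R$ collar of $A_R$ on each side of $L_0$, and the resulting action estimate asks for $c$ to exceed $R\lvert\ell\rvert+R\lvert\ell\rvert$. Turning this into a proof requires (i) the $C^0$-confinement of Floer cylinders away from $\partial A_R$ (arranged by making $f$ steep in a collar outside $\mathrm{supp}\,H$), (ii) a careful bookkeeping of action values along a homotopy joining $H$ to $K$, and (iii) Po\'{z}niak's theorem, to handle the Morse--Bott degeneracy of the model and to verify that $H_\ast(\mathbb{T}^{n+1};\mathbb{Z}/2)$ persists in the limit --- which at bottom rests on $L_0\times\mathbb{T}^n$ bounding no holomorphic disk in $A_R\times\mathbb{T}^{2n}$ (so that $\mathbb{T}^n$ is non-displaceable in $\mathbb{T}^{2n}$). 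The sharp value $R\lvert\ell\rvert$ of Theorem \ref{theorem} is out of reach for this cruder argument because there $T_\alpha^{[a,\infty);c}$ is no longer an isomorphism but only non-zero, which is why Po\'{z}niak's theorem has to be applied repeatedly as in the body of the paper; in particular Theorem \ref{theorem:kawasaki2} is the special case $u=0$, $a=-\infty$ of Theorem \ref{theorem}.
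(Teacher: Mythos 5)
First, a remark on provenance: the paper does not prove Theorem \ref{theorem:kawasaki2} at all --- it is quoted from \cite{Ka} and, within this paper, is only recovered a posteriori as a weakening of Theorem \ref{theorem:main3} (setting $u=0$, $a=-\infty$ gives $C(\mathbb{T}^{2n},\mathbb{T}^n;R,0,\ell,-\infty)=\max\{R\lvert\ell\rvert,-\infty\}=R\lvert\ell\rvert\leq 2R\lvert\ell\rvert$). Your proposal instead attempts a direct BPS-style proof, and its decisive step is wrong. You claim that for $c>2R\lvert\ell\rvert$ the comparison homomorphism $T_{\hat{\alpha}_{\ell}}^{[a,\infty);c}$ becomes an \emph{isomorphism}, ``as in \cite{BPS,Ni,Xu}''. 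This contradicts the paper's own computation: by Theorems \ref{theorem:main} and \ref{theorem:main2}, for every admissible $a$ and $c$ the source is $H_{\ast}(\mathbb{T}^{2n+1};\mathbb{Z}/2\mathbb{Z})$ and the target is $H_{\ast}(\mathbb{T}^{n+1};\mathbb{Z}/2\mathbb{Z})$; the total dimensions are $2^{2n+1}$ and $2^{n+1}$, so for $n\geq1$ the map is never an isomorphism, no matter how large $c$ is. The introduction says this in so many words (``in our case $T_{\alpha}^{[a,\infty);c}$ is not an isomorphism''). The obstruction is structural, not a matter of constants: the cofinal Hamiltonians computing ${\SH}$ have critical manifolds $\cong\mathbb{T}^{2n+1}$ (the $p_1,\dots,p_n$ directions are unconstrained), while those computing ${\RSH}$ relative to $L_0\times\mathbb{T}^n$ have critical manifolds $\cong\mathbb{T}^{n+1}$, so no enlargement of $c$ makes the two limits agree.

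Second, even if one replaces ``isomorphism'' by the weaker (and sufficient) assertion $T_{\hat{\alpha}_{\ell}}^{[a,\infty);c}\neq0$, your proposal offers no argument for it: the sentence ``Finally one shows that\dots'' is precisely the content of the entire proof of Theorem \ref{theorem:main2} (Claims \ref{claim:1}--\ref{claim:6}), where one perturbs by the Morse functions $F_{\pm T}$, introduces the intermediate action level $b$ and the auxiliary Hamiltonian $\widetilde{H}$, and chases the long exact sequences \eqref{eq:diagram} to see that the $\mathbb{T}^{n+1}$-summand survives the monotone homomorphism $\sigma_{H_{-T}H_T}$. Your heuristic that the factor $2$ in $2R\lvert\ell\rvert$ arises from ``traversing a width-$R$ collar on each side of $L_0$'' is never converted into an action estimate and plays no role in the machinery you set up. So the proposal correctly identifies the framework (filtered Floer homology of $A_R\times\mathbb{T}^{2n}$, a split model Hamiltonian, Po\'{z}niak's theorem), but the step that would actually produce a periodic trajectory is both unproved and, as stated, false. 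The honest route within this paper is the one your last sentence gestures at: deduce the inequality from Theorem \ref{theorem:main3}.
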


Therefore Theorem \ref{theorem:main3} improves Theorem \ref{theorem:kawasaki2}.
Moreover, the first author proposed the following conjecture.

\begin{conjecture}[{\cite[Conjecture 3.1]{Ka}}]\label{conjecture}
Let $X$ be a stably non-displaceable compact subset of a closed symplectic manifold $(M,\omega)$.
Show that the equality
\[
	C(M,X;R,0,\ell,-\infty)=R\lvert\ell\rvert
\]
holds for any $R>0$ and $\ell\in\mathbb{Z}$.
\end{conjecture}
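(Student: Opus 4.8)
The plan is to split the claimed equality $C(M,X;R,0,\ell,-\infty)=R\lvert\ell\rvert$ into the two inequalities, to dispose of the lower bound by an explicit construction, and to attack the upper bound by transporting the filtered Floer-homology machinery of the present paper to the stabilized setting $M\times T^*S^1$, using stable non-displaceability of $X$ as the sole geometric input.

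\emph{Lower bound.} The inequality $C(M,X;R,0,\ell,-\infty)\ge R\lvert\ell\rvert$ needs no hypothesis on $X$ and is proved exactly as in the sharpness part of Theorem \ref{theorem:main3}. For $\ell\ne 0$ and any $c<R\lvert\ell\rvert$, pick a compactly supported $f\colon(-R,R)\to\mathbb{R}$ with $f'\equiv 0$ near $\pm R$, with $\lvert f'\rvert<\lvert\ell\rvert$ everywhere, and with $f(0)\ge c$; this is possible since $\bigl\lvert\int_{-R}^{0}f'\bigr\rvert$ can be made arbitrarily close to $R\lvert\ell\rvert$ under these constraints. Then $H(p_0,q_0,m)=f(p_0)$ lies in $\mathcal{H}_c(A_R\times M,L_0\times X)$, every $1$-periodic orbit of $H$ sits over a level $p_0$ with $f'(p_0)\in\mathbb{Z}$, and hence lies in a free homotopy class $(\alpha_k,0_M)$ with $\lvert k\rvert<\lvert\ell\rvert$; so $\mathcal{P}(H;(\alpha_\ell,0_M))=\emptyset$. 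Letting $c\uparrow R\lvert\ell\rvert$ gives the bound.

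\emph{Reduction of the upper bound.} The content of the conjecture is $C(M,X;R,0,\ell,-\infty)\le R\lvert\ell\rvert$, i.e.\ that every compactly supported $H$ on $A_R\times M$ with $\inf_{S^1\times L_0\times X}H\ge R\lvert\ell\rvert$ carries an orbit in $(\alpha_\ell,0_M)$. First I would remove the winding by conjugating with the loop of Hamiltonian diffeomorphisms generated by the autonomous function $-\ell p_0$ (its time-$t$ flow rotates the $q_0$-coordinate by $\ell t$, and its time-$1$ flow is the identity on $A_R$). An orbit of $H$ in $(\alpha_\ell,0_M)$ then corresponds bijectively to a contractible $1$-periodic orbit of
\[
	K_t(p_0,q_0,m)=H_t(p_0,q_0+\ell t,m)+\ell p_0 ,
\]
so $\mathcal{P}(H;(\alpha_\ell,0_M))=\emptyset$ if and only if $K$ has no contractible $1$-periodic orbit. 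Since $\ell p_0$ vanishes on $L_0$ and $q_0\mapsto q_0+\ell t$ is a measure-preserving bijection of $L_0$, one has $\inf_{S^1\times L_0\times X}K=\inf_{S^1\times L_0\times X}H$; and $\lvert\ell p_0\rvert<R\lvert\ell\rvert$ on $A_R$. Absorbing the reparametrization $H_t(\cdot,\cdot+\ell t,\cdot)=:\beta$ (a bijection of compactly supported Hamiltonians preserving $\inf_{S^1\times L_0\times X}$), the upper bound becomes: \emph{for every compactly supported $\beta$ on $A_R\times M$ with $\inf_{S^1\times L_0\times X}\beta\ge R\lvert\ell\rvert$, the Hamiltonian $\beta+\ell p_0$ has a contractible $1$-periodic orbit.} Viewing $A_R\subset T^*S^1$ with $A_R\times M\subset M\times T^*S^1$, this Hamiltonian has slope $\ell$ in the cotangent direction at infinity and is anchored at the zero-section circle over $X$, i.e.\ at $X\times S^1$ — precisely the object whose non-displaceability defines stable non-displaceability of $X$.

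\emph{Floer-theoretic core.} To produce this orbit I would rerun the filtered symplectic-homology argument of Section \ref{section:2} with $\mathbb{T}^{2n}$ replaced by $M$ and $\mathbb{T}^n$ by $X$, aiming at non-vanishing of the transfer-type homomorphism $T_{(\alpha_0,0_M)}^{[a,\infty);c}$ (Subsection \ref{subsection:symp}) attached to $\beta+\ell p_0$; equivalently, at a spectral estimate of the form
\[
	c\bigl(\mathbf{1}_M;\,\beta+\ell p_0\bigr)\ \ge\ \inf_{S^1\times L_0\times X}\beta\ -\ R\lvert\ell\rvert
\]
for the unit and a suitable relative Biran--Polterovich--Salamon type spectral invariant (cf.\ \cite{BPS}). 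By monotonicity of spectral invariants, positivity of the right-hand side under our hypothesis would force a contractible orbit of $\beta+\ell p_0$, hence an orbit of $H$ in $(\alpha_\ell,0_M)$; Po\'zniak's theorem (Theorem \ref{theorem:Pozniak}) would be used, as in the torus case, to reduce this to an explicit model Hamiltonian with $\beta$ a large bump on $L_0\times X$. The one admissible input here is stable non-displaceability of $X$: by the standard criterion it is equivalent to the non-vanishing of the fundamental-class spectral invariant of $X\times S^1\subset M\times T^*S^1$, and it is exactly this non-vanishing that should pin the model estimate down; the $S^1$-factor in the definition of stable (non-)displaceability is, by design, the circle direction of $A_R$, so the two notions match up.

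\emph{Main obstacle.} The hard part is carrying out the model estimate using \emph{only} stable non-displaceability. In the torus case one exploits that the relevant model Hamiltonians on $A_R\times\mathbb{T}^{2n}$ have explicit Morse--Bott fixed-point manifolds to which Po\'zniak's theorem applies repeatedly, and that $\mathbb{T}^n$ is heavy in $\mathbb{T}^{2n}$, so that its Lagrangian Floer homology is as large as possible; for a general stably non-displaceable compact $X\subset(M,\omega)$ — which need not be Lagrangian and whose Floer homology may well vanish — neither ingredient is at hand. One would therefore need either a Po\'zniak-type computation valid for arbitrary $(M,\omega)$, or a genuinely new argument extracting the sharp constant $R\lvert\ell\rvert$ — rather than $2R\lvert\ell\rvert$, which is what the cruder stable-displacement-energy method à la \cite{Ka} seems to yield — directly from the non-vanishing of a stable partial symplectic quasi-state detecting $X$. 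Bridging the gap between $2R\lvert\ell\rvert$ and $R\lvert\ell\rvert$ in this generality is, as far as we can see, the crux of the conjecture, and is why it remains open.
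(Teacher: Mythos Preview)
The statement is a \emph{conjecture}, not a theorem, and the paper does not prove it: it is quoted from \cite{Ka} and the paper establishes only the special case $(M,X)=(\mathbb{T}^{2n},\mathbb{T}^n)$, as an immediate consequence of Theorem~\ref{theorem:main3}. There is therefore no proof in the paper to compare your proposal against.

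Your write-up reflects this accurately. The lower bound $C(M,X;R,0,\ell,-\infty)\ge R\lvert\ell\rvert$ that you give is correct and is essentially the same construction as the sharpness half of the proof of Theorem~\ref{theorem:main3} (specialized to $u=0$, $a=-\infty$): an autonomous Hamiltonian $f(p_0)$ with $\lvert f'\rvert<\lvert\ell\rvert$ has no orbit in the class $(\alpha_\ell,0_M)$, and $f(0)$ can be pushed arbitrarily close to $R\lvert\ell\rvert$. This part needs no hypothesis on $X$ and is not in dispute.

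For the upper bound you do not claim a proof, and rightly so. Your conjugation by the loop generated by $-\ell p_0$ is a standard and legitimate reduction, but the resulting Hamiltonian is no longer compactly supported (you implicitly acknowledge this by moving to the $T^*S^1$ picture with linear slope at infinity), so the filtered Floer machinery of Section~\ref{section:2} does not apply as stated. More substantively, your diagnosis of the obstruction is on target: the paper's argument in the torus case rests on explicit Morse--Bott computations (Theorem~\ref{theorem:Pozniak}) for model Hamiltonians on $A_R\times\mathbb{T}^{2n}$, together with the specific topology of $\mathbb{T}^n\subset\mathbb{T}^{2n}$, and none of this is available for an arbitrary stably non-displaceable compact $X$. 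The gap you identify between the $2R\lvert\ell\rvert$ bound obtainable from displacement-energy arguments (Theorem~\ref{theorem:kawasaki2}) and the conjectured $R\lvert\ell\rvert$ is precisely the open content of the conjecture.
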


Theorem \ref{theorem:main3} proves Conjecture \ref{conjecture} for $(M,X)=(\mathbb{T}^{2n},\mathbb{T}^n)$.

The paper is organized as follows.
In Section \ref{section:2}, we introduce the Floer homology and the symplectic homology for non-contractible trajectories which are the main tools to prove our main theorems.
In Section \ref{section:3}, we calculate the dimensions of the Floer homology and the symplectic homology to prove our main theorems.
In Section \ref{section:4}, we prove our main theorems (Theorems \ref{theorem} and \ref{theorem:main3}).


\section{Symplectic homology}\label{section:2}

In this section, we define the Floer homology for non-contractible periodic trajectories (see \cite[Section 4]{BPS} for details).

Let $(\overline{M},\omega)$ be a compact symplectic manifold with convex boundary $\partial\overline{M}$
(i.e., there exists a Liouville vector field defined on an open neighborhood of $\partial\overline{M}$ in $\overline{M}$ and pointing outward along $\partial\overline{M}$)
and denote $M=\overline{M}\setminus\partial\overline{M}$.
Although the product of compact symplectic manifolds with convex boundary need not have convex boundary,
we can still define the Floer homology of the product
according to \cite[\textsc{Products}, Section 3]{FS}.
In Section \ref{section:3}, we will consider the Floer homology of the product $\overline{A_R}\times\mathbb{T}^{2n}$
which has no convex boundary, where $\overline{A_R}=[-R,R]\times\mathbb{R}/\mathbb{Z}$.

\subsection{Action functional}

For a free homotopy class $\alpha\in [S^1,M]$
denote by $\mathcal{L}_{\alpha}M$ the space of free loops $S^1\to M$ representing $\alpha$.
In addition, we assume that our manifold $(\overline{M},\omega)$ is \textit{symplectically $\alpha$-atoroidal}, i.e.,
for any free loop $u$ in $\mathcal{L}_{\alpha}M$, that is for $u\colon S^1\to\mathcal{L}_{\alpha}M$
considered as the map $u\colon \mathbb{T}^2\to M$ from the two-torus,
\[
	\int_{\mathbb{T}^2} u^{\ast}\omega=0 \quad \text{and} \quad \int_{\mathbb{T}^2} u^{\ast}c_1=0
\]
hold where $c_1$ is the first Chern class.

Let $H\in\mathcal{H}=C_0^{\infty}(S^1\times M)$ be a Hamiltonian with compact support.
Let $H_t$ denote $H(t,\cdot)$ for $t\in S^1=\mathbb{R}/\mathbb{Z}$.
The \textit{Hamiltonian vector field} $X_H\in \mathfrak{X}(M)$ associated to $H$ is defined by
\[
	\iota_{X_H}\omega=-dH.
\]
The \textit{Hamiltonian isotopy} $\{\varphi_H^t\}_{t\in [0,1]}$ associated to $H$ is defined by
\[
	\begin{cases}
		\varphi_H^0=\mathrm{id},\\
		\frac{d}{dt}\varphi_H^t=X_{H_t}\circ\varphi_H^t\quad \text{for all}\ t\in [0,1],
	\end{cases}
\]
and its time-one map $\varphi_H=\varphi_H^1$ is referred to as the \textit{Hamiltonian diffeomorphism} of $H$.
Let $\mathcal{P}(H;\alpha)$ be the set of one-periodic trajectories of $\varphi_H$ representing $\alpha$.
A one-periodic trajectory $x\in\mathcal{P}(H;\alpha)$ is called \textit{non-degenerate}
if it satisfies $\det\bigl(d\varphi_H(x(0))-\mathrm{id}\bigr)\neq 0$.

Fix a reference loop $z\in\alpha$.
We define the \textit{action functional} $\mathcal{A}_H\colon \mathcal{L}_{\alpha}M\to \mathbb{R}$ by
\[
	\mathcal{A}_H(x)=-\int_{[0,1]\times S^1} \bar{x}^{\ast}\omega +\int_{0}^{1}H_t\bigl(x(t)\bigr)\, dt,
\]
where $H_t=H(t,\cdot)$ and
$\bar{x}$ is a path in $\mathcal{L}_{\alpha}M$ between $z$ and $x$
considered as a map $\bar{x}\colon [0,1]\times S^1\to M$ from the annulus $[0,1]\times S^1$ to $M$.
Since our manifold $(\overline{M},\omega)$ is symplectically $\alpha$-atoroidal,
the functional $\mathcal{A}_H$ is well-defined as a real-valued function.
Note that $\mathcal{P}(H;\alpha)$ is equal to the set of critical points of $\mathcal{A}_H$.

We define the \textit{action spectrum} of $\mathcal{A}_H$ by
\[
	\mathrm{Spec}(H;\alpha)=\mathcal{A}_H\bigl(\mathcal{P}(H;\alpha)\bigr).
\]
Let $a$ and $b$ be real numbers such that $-\infty\leq a<b\leq\infty$.
Suppose that the Hamiltonian $H$ satisfies $a,b\not\in\mathrm{Spec}(H;\alpha)$
and that it is \textit{regular}, i.e., every one-periodic trajectory $x\in\mathcal{P}(H;\alpha)$ is non-degenerate.
We define $\mathcal{P}^{[a,b)}(H;\alpha)=\mathcal{P}^b(H;\alpha)\setminus \mathcal{P}^a(H;\alpha)$
where $\mathcal{P}^a(H;\alpha)=\{\,x\in\mathcal{P}(H;\alpha)\mid \mathcal{A}_H(x)<a\,\}$.


\subsection{Filtered Floer chain complex}

We define the chain group of our Floer chain complex to be the $\mathbb{Z}/2\mathbb{Z}$-vector space
\[
	\mathrm{CF}^{[a,b)}(H;\alpha)=\mathrm{CF}^b(H;\alpha)/\mathrm{CF}^a(H;\alpha),
\]
where
\[
	\mathrm{CF}^a(H;\alpha)=\bigoplus_{x\in\mathcal{P}^{a}(H;\alpha)}\mathbb{Z}/2\mathbb{Z}\, x.
\]

Let $J_t=J_{t+1}\in\mathcal{J}(M,\omega)$ be a time-dependent smooth family of $\omega$-compatible almost complex structures on $\overline{M}$ such
that $J_t$ is convex and independent of $t$ near the boundary $\partial\overline{M}$.
Consider the Floer differential equation
\begin{equation}\label{eq:1}
	\partial_s u+J_t(u)\bigl(\partial_t u-X_{H_t}(u)\bigr)=0.
\end{equation}
Here we note that
\[
	\grad\mathcal{A}_H(u(s,\cdot))=J_t\bigl(u(s,\cdot)\bigr)\left(\partial_t u(s,\cdot)-X_{H_t}\bigl(u(s,\cdot)\bigr)\right)
\]
for all $s$.
For a smooth solution $u$ to \eqref{eq:1} we define the energy by the formula
\[
	E(u)=\int_0^1\int_{-\infty}^{\infty}\lvert\partial_s u\rvert^2\,dsdt.
\]
Then we have the following lemma.

\begin{lemma}[\cite{Sa}]
Let $u\colon \mathbb{R}\times S^1\to M$ be a smooth solution to \eqref{eq:1} with finite energy.
\begin{enumerate}
	\item There exist periodic solutions $x^{\pm}\in\mathcal{P}(H;\alpha)$ such that
		\[
			\lim_{s\to\pm\infty}u(s,t)=x^{\pm}(t)\quad \text{and}\quad \lim_{s\to\pm\infty}\partial_s u(s,t)=0,
		\]
		where both limits are uniform in the $t$-variable.
	\item The energy identity holds:
		\[
			E(u)=\mathcal{A}_H(x^-)-\mathcal{A}_H(x^+).
		\]
\end{enumerate}
\end{lemma}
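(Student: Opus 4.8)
The statement is the standard asymptotic-convergence and energy identity for finite-energy Floer cylinders, and the plan is to reproduce the argument of \cite{Sa}, keeping track of the convex boundary. The first task is the a priori estimates. Because $H$ is compactly supported and $J_t$ is convex near $\partial\overline M$, a maximum-principle argument (the composition of $u$ with a plurisubharmonic defining function for the boundary is subharmonic) confines the image of any finite-energy solution $u$ to a fixed compact subset of $M$. The key estimate is then the uniform gradient bound $\sup_{\mathbb R\times S^1}\lvert\nabla u\rvert<\infty$: if it failed, rescaling near a blow-up point would produce a nonconstant finite-energy $J$-holomorphic sphere, which is excluded in the present setting (symplectic asphericity, and in particular for the products $\overline{A_R}\times\mathbb T^{2n}$ considered in Section \ref{section:3}). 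Given the $C^1$ bound, elliptic bootstrapping applied to \eqref{eq:1} yields uniform $C^k$ bounds on $u$ for every $k$.

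Next I would show $\partial_s u(s,\cdot)\to 0$ uniformly in $t$ as $s\to\pm\infty$. Set $g(s)=\tfrac12\int_0^1\lvert\partial_s u(s,t)\rvert^2\,dt$, so that $\int_{\mathbb R}g(s)\,ds=\tfrac12 E(u)<\infty$. The uniform higher-derivative bounds make $g$ uniformly continuous, and a nonnegative, integrable, uniformly continuous function on $\mathbb R$ must tend to $0$ at $\pm\infty$; hence $\lVert\partial_s u(s,\cdot)\rVert_{L^2(S^1)}\to 0$, and the $C^k$ bounds upgrade this to uniform convergence in $t$.

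Then I would extract the asymptotic orbits. Given $s_j\to+\infty$, Arzel\`a--Ascoli together with a diagonal argument give a subsequence along which $u(s_j,\cdot)\to x^+$ in $C^\infty(S^1,M)$; letting $j\to\infty$ in \eqref{eq:1}, using $\partial_s u\to 0$ and the invertibility of $J_t$, forces $\partial_t x^+-X_{H_t}(x^+)=0$, and $x^+$ represents $\alpha$ since it is $C^0$-close to loops representing $\alpha$, so $x^+\in\mathcal P(H;\alpha)$. To pass from subsequential to genuine convergence, note (established below) that $s\mapsto\mathcal A_H(u(s,\cdot))$ is monotone and bounded, hence convergent, so the set of limit points of $u(s,\cdot)$ as $s\to+\infty$ is a connected subset of $\mathcal P(H;\alpha)$ contained in a single level set of $\mathcal A_H$; when $H$ is regular this set is discrete and the limit $x^+$ is unique, while in general one appeals to the local structure of $\mathcal A_H$ near its critical loops (or works in the real-analytic category via a \L{}ojasiewicz inequality). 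The case $s\to-\infty$ is symmetric and produces $x^-$.

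Finally, the energy identity follows from a chain-rule computation. By the identity displayed just after \eqref{eq:1}, along a solution of \eqref{eq:1} one has $\grad\mathcal A_H(u(s,\cdot))=J_t(u)\bigl(\partial_t u-X_{H_t}(u)\bigr)=-\partial_s u(s,\cdot)$, so $\tfrac{d}{ds}\mathcal A_H(u(s,\cdot))=-\lVert\partial_s u(s,\cdot)\rVert_{L^2(S^1)}^2$; integrating over $s\in\mathbb R$ and using $\mathcal A_H(u(s,\cdot))\to\mathcal A_H(x^\pm)$ from the previous step gives $E(u)=\int_{\mathbb R}\lVert\partial_s u(s,\cdot)\rVert_{L^2(S^1)}^2\,ds=\mathcal A_H(x^-)-\mathcal A_H(x^+)$. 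The genuinely hard point is the a priori gradient bound, i.e.\ the exclusion of bubbling, which is the only place where the geometry of $(\overline M,\omega,J)$ really enters; everything else is soft analysis, with a secondary subtlety arising only if one wants the statement without assuming $H$ regular, where a possible continuum of limit orbits has to be ruled out.
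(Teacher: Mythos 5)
Your argument is the standard one from Salamon's lectures, which is exactly what the paper relies on: the lemma is quoted from \cite{Sa} without proof, and your outline (compactness via convexity and the maximum principle, gradient bounds via exclusion of bubbling, the integrable--uniformly-continuous argument for $\partial_s u\to 0$, Arzel\`a--Ascoli plus monotonicity of the action for the asymptotics, and the chain-rule computation for the energy identity) reproduces that reference's proof. The approach is correct and essentially identical to the cited source.
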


We call a family of almost complex structures \textit{regular}
if the linearized operator for \eqref{eq:1} is surjective for any finite-energy solution of \eqref{eq:1}
in the homotopy class $\alpha$.
We denote by $\mathcal{J}_{\mathrm{reg}}(H;\alpha)$ the space of regular families of almost complex structures.
This subspace is generic in $\mathcal{J}(M,\omega)$ (see \cite{FHS}).
For any $J\in\mathcal{J}_{\mathrm{reg}}(H;\alpha)$ and any pair $x^{\pm}\in\mathcal{P}(H;\alpha)$
the space
\[
	\mathcal{M}(x^-,x^+;H,J)=\{\,\text{solution of \eqref{eq:1} satisfying (i)}\,\}
\]
is a smooth manifold whose dimension near such a solution $u$ is given by
the difference of the Conley--Zehnder indices (see \cite{SZ}) of $x^-$ and $x^+$ relative to $u$.
The subspace of solutions of relative index 1 is denoted by $\mathcal{M}^1(x^-,x^+;H,J)$.
For $J\in\mathcal{J}_{\mathrm{reg}}(H;\alpha)$ the quotient $\mathcal{M}^1(x^-,x^+;H,J)/\mathbb{R}$
is a finite set for any pair $x^{\pm}\in\mathcal{P}(H;\alpha)$.
We define the boundary operator $\partial^{H,J}\colon \mathrm{CF}^b(H;\alpha)\to \mathrm{CF}^b(H;\alpha)$ by
\[
	\partial^{H,J}(x)=\sum \#_2\left(\mathcal{M}^1(x,y;H,J)/\mathbb{R}\right)\,y
\]
for $x\in \mathcal{P}^b(H;\alpha)$ where $\#_2$ denotes the modulo 2 counting.

\begin{theorem}[\cite{Fl}]
If $J$ is regular, then the operator $\partial^{H,J}$ is well-defined and satisfies $\partial^{H,J}\circ\partial^{H,J}=0$.
\end{theorem}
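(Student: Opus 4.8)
The plan is to run Floer's original argument in the $\alpha$-atoroidal, non-contractible setting of \cite{BPS,Sa}. For well-definedness I would first fix $x,y\in\mathcal{P}^b(H;\alpha)$ and show that the coefficient $\#_2\bigl(\mathcal{M}^1(x,y;H,J)/\mathbb{R}\bigr)$ makes sense, i.e.\ that this $0$-dimensional manifold is finite. The energy identity from the preceding lemma gives $E(u)=\mathcal{A}_H(x)-\mathcal{A}_H(y)$ for every $u\in\mathcal{M}(x,y;H,J)$; since $H$ has compact support every element of $\mathcal{P}(H;\alpha)$ lies in a fixed compact set and, $J$ being regular, there are only finitely many of them, so this energy bound is uniform over the moduli space. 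With a uniform energy bound in hand, Gromov--Floer compactness says that any sequence in $\mathcal{M}^1(x,y;H,J)/\mathbb{R}$ subconverges to a broken trajectory from $x$ to $y$: no bubbling occurs under our hypotheses (vanishing of $\omega$ on the relevant $2$-cycles, which in the product case of Section \ref{section:3} is guaranteed by the asphericity of $A_R$ and $\mathbb{T}^{2n}$), and no trajectory escapes through $\partial\overline{M}$ by the convexity of $J$ there together with the maximum principle (respectively by the product construction of \cite{FS}).

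Next I would note that the relative Conley--Zehnder index of every nonconstant component of such a broken limit is at least $1$: for regular $J$ the space $\mathcal{M}(x',y';H,J)$ is a manifold of dimension equal to the relative index, and when $x'\neq y'$ the free $\mathbb{R}$-translation forces this dimension to be positive for the component to be nonempty. Since the relative indices add up to $1$ along the broken configuration, there is exactly one component and hence no genuine breaking; thus $\mathcal{M}^1(x,y;H,J)/\mathbb{R}$ is compact, and a compact $0$-manifold is finite. This makes $\partial^{H,J}$ well-defined; moreover the energy identity yields $\mathcal{A}_H(y)\leq\mathcal{A}_H(x)$ whenever $\mathcal{M}^1(x,y;H,J)\neq\emptyset$, so $\partial^{H,J}$ decreases the action, preserves $\mathrm{CF}^b(H;\alpha)$, and descends to the quotient $\mathrm{CF}^{[a,b)}(H;\alpha)$.

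For $\partial^{H,J}\circ\partial^{H,J}=0$ I would use the usual one-dimensional cobordism. Given $x,z\in\mathcal{P}^b(H;\alpha)$, the coefficient of $z$ in $\partial^{H,J}\bigl(\partial^{H,J}(x)\bigr)$ is $\sum_{y}\#_2\bigl(\mathcal{M}^1(x,y;H,J)/\mathbb{R}\bigr)\cdot\#_2\bigl(\mathcal{M}^1(y,z;H,J)/\mathbb{R}\bigr)$, which counts modulo $2$ the once-broken trajectories from $x$ to $z$. The space $\mathcal{M}^2(x,z;H,J)/\mathbb{R}$ is a $1$-manifold whose only non-compactness comes from such once-broken trajectories (a second breaking would need total relative index $\geq 3$, and bubbling is again excluded), and Floer's gluing theorem identifies each once-broken trajectory with a single end of this $1$-manifold. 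Hence the once-broken trajectories from $x$ to $z$ biject with the ends of a compact $1$-manifold with boundary, so there is an even number of them, giving $\partial^{H,J}\circ\partial^{H,J}=0$.

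The main technical point is the compactness input in the product setting of Section \ref{section:3}: because $\overline{A_R}\times\mathbb{T}^{2n}$ has no convex boundary, one cannot apply the maximum principle directly and must instead invoke the product construction of \cite{FS} to obtain the a priori estimates excluding trajectories that run off to the boundary; transversality for generic $J$, the index count, and the gluing are then entirely standard, with $\alpha$-atoroidality used only to keep $\mathcal{A}_H$ and the relative Conley--Zehnder index single-valued.
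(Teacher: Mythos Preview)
The paper does not give its own proof of this statement: it is stated as a theorem of Floer with the citation \cite{Fl} and no argument is supplied. Your sketch is therefore not competing with anything in the paper; it is the standard Floer argument (as in \cite{Fl,Sa}) specialized to the $\alpha$-atoroidal, convex-at-infinity setting described in Section~\ref{section:2}, and it is essentially correct as an outline.

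Two small remarks. First, the finiteness of $\mathcal{P}(H;\alpha)$ comes from the assumed non-degeneracy of $H$ (regularity of the Hamiltonian), not from regularity of $J$; you should attribute it accordingly. Second, the $\alpha$-atoroidal hypothesis controls integrals over \emph{tori} and is what makes $\mathcal{A}_H$ and the relative Conley--Zehnder index single-valued; it does not by itself exclude \emph{sphere} bubbling. In the concrete situation of Section~\ref{section:3} this is a non-issue since $\pi_2(A_R\times\mathbb{T}^{2n})=0$, as you note parenthetically, but if you want the argument to stand at the generality of Section~\ref{section:2} you should either invoke an asphericity assumption on $(\overline{M},\omega)$ or defer the point to \cite{BPS,Sa} where the relevant compactness is established.
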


The energy identity (ii) implies that $\mathrm{CF}^a(H;\alpha)$ is invariant under the boundary operator $\partial^{H,J}$.
Thus we get an operator $[\partial^{H,J}]$ on the quotient $\mathrm{CF}^{[a,b)}(H;\alpha)$.

\begin{definition}
The \textit{filtered Floer homology group} is defined to be
\[
	\mathrm{HF}^{[a,b)}(H,J;\alpha)=\Ker{[\partial^{H,J}]}/\Image{[\partial^{H,J}]}.
\]
\end{definition}

\begin{theorem}[\cite{Fl,Sa,SZ}]
If $J_0, J_1\in\mathcal{J}(H;\alpha)$ are two regular almost complex structures, then there exists a natural isomorphism
\[
	\mathrm{HF}^{[a,b)}(H,J_0;\alpha)\to \mathrm{HF}^{[a,b)}(H,J_1;\alpha).
\]
\end{theorem}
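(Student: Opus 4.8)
The plan is to realize this isomorphism as a \emph{continuation homomorphism} attached to a homotopy of almost complex structures. First I would fix a smooth family $\{J^s\}_{s\in\mathbb{R}}$ in $\mathcal{J}(M,\omega)$, each $J^s$ convex and $t$-independent near $\partial\overline{M}$, with $J^s\equiv J_0$ for $s\le-1$ and $J^s\equiv J_1$ for $s\ge1$; after a generic perturbation of the family we may also assume that the linearized operator of the $s$-dependent Floer equation
\[
	\partial_s u+J^s_t(u)\bigl(\partial_t u-X_{H_t}(u)\bigr)=0
\]
is surjective at every finite-energy solution in the class $\alpha$, by the same Sard--Smale argument (\cite{FHS}) that produces $\mathcal{J}_{\mathrm{reg}}(H;\alpha)$.

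The point that makes the argument clean is that the Hamiltonian $H$ does not vary with $s$, so along any solution $u$ of the displayed equation one still has $\frac{d}{ds}\mathcal{A}_H(u(s,\cdot))=-\lVert\partial_s u(s,\cdot)\rVert^2\le0$; hence the asymptotic orbits $x^{\pm}=\lim_{s\to\pm\infty}u(s,\cdot)$ satisfy $\mathcal{A}_H(x^-)\ge\mathcal{A}_H(x^+)$ and $E(u)=\mathcal{A}_H(x^-)-\mathcal{A}_H(x^+)$. Counting with $\mathbb{Z}/2\mathbb{Z}$-coefficients the isolated solutions of relative index $0$ — there is no longer an $\mathbb{R}$-action to quotient by — therefore defines a homomorphism $\Phi\colon \mathrm{CF}^b(H;\alpha)\to\mathrm{CF}^b(H;\alpha)$ which \emph{decreases} the action, hence preserves the subcomplex $\mathrm{CF}^a(H;\alpha)$ and descends to $\mathrm{CF}^{[a,b)}(H;\alpha)$. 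Inspecting the ends of the one-dimensional moduli spaces of relative index $1$ (which break into a Floer trajectory and a continuation trajectory, at either end) shows $\partial^{H,J_1}\circ\Phi=\Phi\circ\partial^{H,J_0}$, so $\Phi$ is a chain map and induces $[\Phi]$ on $\mathrm{HF}^{[a,b)}$.

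To prove $[\Phi]$ is an isomorphism I would run the usual two-step comparison. Reversing the roles of $J_0$ and $J_1$ produces a continuation map $\Psi$ in the other direction, and concatenating the two homotopies with gluing length $\rho\to\infty$ gives a one-parameter family of moduli problems interpolating between the glued homotopy and the constant homotopy $J\equiv J_0$; the isolated solutions of relative index $-1$ in this parametrized family assemble into an operator $K$ with $\partial^{H,J_0}\circ K+K\circ\partial^{H,J_0}=\Psi\circ\Phi-\mathrm{id}$, and the same energy estimate shows $K$ respects the filtration, so $[\Psi\circ\Phi]=\mathrm{id}$ on $\mathrm{HF}^{[a,b)}(H,J_0;\alpha)$. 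Symmetrically $[\Phi\circ\Psi]=\mathrm{id}$, whence $[\Phi]$ is an isomorphism. Finally, any two admissible homotopies from $J_0$ to $J_1$ are themselves homotopic, and a homotopy-of-homotopies yields a chain homotopy between the corresponding continuation maps; this makes $[\Phi]$ canonical, and the analogous bookkeeping for a triple $J_0,J_1,J_2$ gives functoriality, i.e.\ the \emph{natural} isomorphism in the statement.

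I expect the one genuinely analytic obstacle to be the a priori $C^0$-bound: one must show that every finite-energy solution of the $s$-dependent equation, and of the parametrized equations used for $K$, stays in a fixed compact subset of $M=\overline{M}\setminus\partial\overline{M}$, uniformly in $s$ (and in $\rho$), so that Gromov--Floer compactness applies and the relevant moduli spaces are compact up to breaking. This is precisely where the convexity of each $J^s$ near $\partial\overline{M}$ enters, through a maximum-principle argument pushing solutions away from the boundary. In the product situation of \cite[Section 3]{FS} relevant to $\overline{A_R}\times\mathbb{T}^{2n}$, where $\partial\overline{A_R}\times\mathbb{T}^{2n}$ is not convex, this estimate is not proved directly but imported from that reference; with the compactness granted, the remaining steps are the standard algebra of Floer continuation maps.
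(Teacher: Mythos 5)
Your proposal is correct and is precisely the standard continuation argument that the paper delegates to the cited references \cite{Fl,Sa,SZ}: a homotopy of regular almost complex structures with $H$ fixed yields an action-non-increasing chain map (since the equation remains a negative gradient flow for the instantaneous metric at each $s$), which therefore preserves the filtration and is inverted up to filtered chain homotopy by the reverse homotopy. The key observations — that no monotonicity hypothesis is needed because only $J$ varies, and that the $C^0$-bound via convexity near $\partial\overline{M}$ (or via \cite{FS} in the product case) is the genuine analytic input — are exactly the right ones.
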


We refer to $\mathrm{HF}^{[a,b)}(H;\alpha)=\mathrm{HF}^{[a,b)}(H,J;\alpha)$ as the Floer homology associated to $H$.


\subsection{Continuation}

We define the set
\[
	\mathcal{H}^{a,b}(M;\alpha)=\{\,H\in\mathcal{H}\mid a,b\not\in\mathrm{Spec}(H;\alpha)\,\}.
\]

\begin{proposition}[{\cite[Remark 4.4.1]{BPS}}]\label{proposition:nbd}
Every Hamiltonian $H\in\mathcal{H}^{a,b}(M;\alpha)$ has a neighborhood $\mathcal{U}$
such that the Floer homology groups $\mathrm{HF}^{[a,b)}(H',J';\alpha)$, for any regular $H'\in \mathcal{U}$
and any regular almost complex structure $J'\in\mathcal{J}_{\mathrm{reg}}(H';\alpha)$, are naturally isomorphic.
\end{proposition}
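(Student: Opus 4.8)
The plan is to carry out the standard filtered continuation argument; the only point requiring care is that $H$ itself is not assumed regular, so one must first produce a neighborhood of $H$ on which the action spectrum stays a definite distance from $a$ and $b$. Since $H$ is compactly supported, every orbit in $\mathcal{P}(H;\alpha)$ has image in the fixed compact set $\operatorname{supp}H$, so $\mathcal{P}(H;\alpha)$ is compact and $\mathrm{Spec}(H;\alpha)=\mathcal{A}_H(\mathcal{P}(H;\alpha))$ is a compact subset of $\mathbb{R}$ missing $a$ and $b$. Fix $\varepsilon>0$ with
\[
	\bigl([a-\varepsilon,a+\varepsilon]\cup[b-\varepsilon,b+\varepsilon]\bigr)\cap\mathrm{Spec}(H;\alpha)=\emptyset .
\]
By upper semicontinuity of the action spectrum under $C^{\infty}$-small perturbations of the Hamiltonian --- which again uses the confinement of the relevant orbits to a fixed compact set --- there is a neighborhood $\mathcal{U}$ of $H$ in $\mathcal{H}$ such that every $H'\in\mathcal{U}$ satisfies
\[
	\bigl([a-\varepsilon,a+\varepsilon]\cup[b-\varepsilon,b+\varepsilon]\bigr)\cap\mathrm{Spec}(H';\alpha)=\emptyset
	\qquad\text{and}\qquad
	\int_{0}^{1}\lVert H'_t-H_t\rVert_{C^{0}(M)}\,dt<\frac{\varepsilon}{4}.
\]
In particular $\mathcal{U}\subset\mathcal{H}^{a,b}(M;\alpha)$.

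Next, let $H_0,H_1\in\mathcal{U}$ be regular and let $J_i\in\mathcal{J}_{\mathrm{reg}}(H_i;\alpha)$ for $i=0,1$. I would take the homotopy $H_s=(1-\chi(s))H_0+\chi(s)H_1$, with $\chi$ a monotone cut-off function equal to $0$ near $-\infty$ and to $1$ near $+\infty$, together with a generic path $J_s$ of $\omega$-compatible almost complex structures, convex near $\partial\overline{M}$, joining $J_0$ to $J_1$. For generic such data the moduli spaces of finite-energy solutions of $\partial_s u+J_s(u)\bigl(\partial_t u-X_{H_s}(u)\bigr)=0$ are transversally cut out \cite{FHS}, the required $C^{0}$-bounds, and hence Gromov--Floer compactness, coming from the convexity near $\partial\overline{M}$ exactly as for \eqref{eq:1}; counting the zero-dimensional components modulo $2$ yields a chain map $\Phi_{10}$ from the Floer complex of $(H_0,J_0)$ to that of $(H_1,J_1)$. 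Integrating the identity $\frac{d}{ds}\mathcal{A}_{H_s}(u(s,\cdot))=-\lVert\partial_s u(s,\cdot)\rVert^{2}+\int_0^1(\partial_s H_s)(t,u(s,t))\,dt$ over $s\in\mathbb{R}$, and using $\chi'\geq0$ and $\int_{\mathbb{R}}\chi'=1$, one obtains for a solution asymptotic to $x^{\pm}$
\[
	\mathcal{A}_{H_1}(x^{+})\leq\mathcal{A}_{H_0}(x^{-})+\int_{0}^{1}\max_{M}(H_1-H_0)_t\,dt<\mathcal{A}_{H_0}(x^{-})+\frac{\varepsilon}{2}.
\]
Hence if $\mathcal{A}_{H_0}(x^{-})<a$ then, since $\mathrm{Spec}(H_0;\alpha)$ avoids $[a-\varepsilon,a+\varepsilon]$, in fact $\mathcal{A}_{H_0}(x^{-})<a-\varepsilon$, so $\mathcal{A}_{H_1}(x^{+})<a-\varepsilon/2<a$; and likewise with $b$ in place of $a$. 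Therefore $\Phi_{10}$ carries $\mathrm{CF}^{a}(H_0;\alpha)$ into $\mathrm{CF}^{a}(H_1;\alpha)$ and $\mathrm{CF}^{b}(H_0;\alpha)$ into $\mathrm{CF}^{b}(H_1;\alpha)$, hence descends to a chain map $\mathrm{CF}^{[a,b)}(H_0;\alpha)\to\mathrm{CF}^{[a,b)}(H_1;\alpha)$ and induces a homomorphism $(\Phi_{10})_{\ast}\colon\mathrm{HF}^{[a,b)}(H_0,J_0;\alpha)\to\mathrm{HF}^{[a,b)}(H_1,J_1;\alpha)$.

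Finally, the usual homotopy-of-homotopies arguments \cite{Fl,Sa,SZ,BPS} --- keeping the homotopies linear in the Hamiltonian variable with monotone cut-offs, so that every intervening energy estimate again produces a shift $<\varepsilon/2$ and therefore, by the band argument above, respects the window $[a,b)$ --- show that $(\Phi_{10})_{\ast}$ is independent of the choices made, that $(\Phi_{00})_{\ast}=\mathrm{id}$, and that $(\Phi_{21})_{\ast}\circ(\Phi_{10})_{\ast}=(\Phi_{20})_{\ast}$ for regular $H_0,H_1,H_2\in\mathcal{U}$. In particular each $(\Phi_{10})_{\ast}$ is an isomorphism with inverse $(\Phi_{01})_{\ast}$, and these isomorphisms form a coherent system; this is the asserted natural identification of the groups $\mathrm{HF}^{[a,b)}(H',J';\alpha)$. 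I expect the main obstacle to be the content of the first paragraph: turning ``$a,b\notin\mathrm{Spec}(H;\alpha)$'' into a \emph{uniform} spectral gap valid on an entire neighborhood of $H$, i.e.\ the requisite compactness and upper semicontinuity for one-periodic orbits of a compactly supported Hamiltonian; once that is in hand, the remainder is the standard filtered continuation machinery.
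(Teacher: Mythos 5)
The paper offers no proof of this proposition; it is quoted directly from \cite[Remark 4.4.1]{BPS}, so there is nothing internal to compare against. Your argument --- compactness of $\mathrm{Spec}(H;\alpha)$ yielding a uniform spectral gap $\varepsilon$ around $a$ and $b$ on a whole neighborhood of $H$, followed by filtered continuation maps whose action shift is bounded by the Hofer-type norm $\int_0^1\max_M(H_1-H_0)_t\,dt<\varepsilon/2$ so that the window $[a,b)$ is respected --- is precisely the standard justification indicated in that remark, and it is correct.
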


According to Proposition \ref{proposition:nbd}, one can define the Floer homology $\mathrm{HF}^{[a,b)}(H;\alpha)$
whether $H$ is regular or not.

\begin{definition}\label{definition:degenerate}
For $H\in\mathcal{H}^{a,b}(M;\alpha)$ we define $\mathrm{HF}^{[a,b)}(H;\alpha)=\mathrm{HF}^{[a,b)}(\widetilde{H};\alpha)$,
where $\widetilde{H}$ is any regular Hamiltonian sufficiently close to $H$.
\end{definition}

\begin{remark}[{\cite[Remark 4.4.2]{BPS}}]\label{remark:0}
We can define the filtered Floer homology $\mathrm{HF}^{[a,b)}(H;0_M)$ if the action interval $[a,b)$ does not contain zero,
i.e, either $-\infty\leq a<b<0$ or $0<a<b\leq\infty$.
\end{remark}


\subsection{Monotone homotopies}\label{subsection:monotone}
We introduce a bidirected partial order on $\mathcal{H}=C_0^{\infty}(S^1\times M)$ by
\[
	H_0\preceq H_1\quad\iff\quad H_0(t,x)\geq H_1(t,x)\qquad\text{for all}\ (t,x)\in S^1\times M.
\]
Then there exists a homotopy $\{H_s\}_s$ from $H_0$ to $H_1$ such that $\partial_s H_s\leq 0$.
We call such a homotopy of Hamiltonians \textit{monotone}.
Let $\alpha\in [S^1,M]$ be a nontrivial free homotopy class and $a,b\in\mathbb{R}\cup\{\infty\}$ such that $a<b$.
It follows from the energy identity
\[
	E(u)=\mathcal{A}_{H_0}(x^-)-\mathcal{A}_{H_1}(x^+)+\int_0^1\int_{-\infty}^{\infty}\partial_s H\bigl(s,t,u(s,t)\bigr)\,dsdt
\]
that the Floer chain map $\Phi_{H_1H_0}\colon\mathrm{CF}(H_0;\alpha)\to\mathrm{CF}(H_1;\alpha)$,
defined in terms of the solutions of the equation
\[
	\partial_s u+J_{s,t}(u)\bigl(\partial_t u-X_{H_{s,t}}(u)\bigr)=0,
\]
preserves the subcomplexes $\mathrm{CF}^a(H_0;\alpha)$ and $\mathrm{CF}^b(H_0;\alpha)$.
Hence every monotone homotopy $\{H_s\}_s$ induces a natural homomorphism
\[
	\sigma_{H_1H_0}\colon \mathrm{HF}^{[a,b)}(H_0;\alpha)\to \mathrm{HF}^{[a,b)}(H_1;\alpha)
\]
whenever $H_0,H_1\in\mathcal{H}^{a,b}(M;\alpha)$ satisfy $H_0\preceq H_1$ (see \cite[Subsection 4.5]{BPS}).
The homomorphism $\sigma_{H_1H_0}$ is called the \textit{monotone homomorphism} from $H_0$ to $H_1$.
We call a monotone homotopy $\{H_s\}_s$ \textit{action-regular}
if $H_s$ takes values in a connected component of $\mathcal{H}^{a,b}(M;\alpha)$.

\begin{lemma}[\cite{FH,CFH}]\label{lemma:functoriality}
The monotone homomorphism is independent of the choice of the monotone homotopy used to define it and
\[
	\sigma_{H_2H_1}\circ \sigma_{H_1H_0}=\sigma_{H_2H_0},\quad \sigma_{H_0H_0}=\mathrm{id},
\]
whenever $H_0,H_1,H_2\in\mathcal{H}^{a,b}(M;\alpha)$ satisfy $H_0\preceq H_1\preceq H_2$.
\end{lemma}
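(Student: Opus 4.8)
The plan is to run the standard Floer continuation argument, the only extra care being that the action window $[a,b)$ is respected at every stage. Fix regular Hamiltonians close to $H_0,H_1$ (chosen, as in \cite{BPS}, so that the pointwise inequality $H_0\geq H_1$ still holds) and a monotone homotopy $\{H_s\}_s$ between them, together with a generic $s$-dependent family $\{J_{s,t}\}$ of almost complex structures that are convex near $\partial\overline{M}$. Counting modulo $2$ the isolated solutions $u$ of $\partial_s u+J_{s,t}(u)(\partial_t u-X_{H_{s,t}}(u))=0$ with $\lim_{s\to-\infty}u=x^-\in\mathcal{P}(H_0;\alpha)$ and $\lim_{s\to+\infty}u=x^+\in\mathcal{P}(H_1;\alpha)$ defines the chain map $\Phi_{H_1H_0}$. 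The energy identity displayed in Subsection \ref{subsection:monotone}, together with $\partial_s H\leq 0$, gives $E(u)\leq\mathcal{A}_{H_0}(x^-)-\mathcal{A}_{H_1}(x^+)$, hence $\mathcal{A}_{H_1}(x^+)\leq\mathcal{A}_{H_0}(x^-)$; since $a,b\notin\mathrm{Spec}(H_i;\alpha)$ this inequality is effective, so $\Phi_{H_1H_0}$ sends $\mathrm{CF}^a(H_0;\alpha)$ into $\mathrm{CF}^a(H_1;\alpha)$ and $\mathrm{CF}^b(H_0;\alpha)$ into $\mathrm{CF}^b(H_1;\alpha)$, and therefore descends to the quotients $\mathrm{CF}^{[a,b)}$. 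Compactness of the relevant moduli spaces follows from the energy bound together with symplectic $\alpha$-atoroidality, which excludes sphere and disk bubbling in the class $\alpha$, and from the maximum principle, which confines all solutions to a fixed compact subset of $M$; inspecting the boundary of the one-dimensional moduli spaces (broken configurations) in the usual way yields $\partial^{H_1,J_1}\Phi_{H_1H_0}=\Phi_{H_1H_0}\partial^{H_0,J_0}$, so $\sigma_{H_1H_0}$ is well-defined on $\mathrm{HF}^{[a,b)}$.

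For independence of the homotopy, observe that the set of monotone homotopies from $H_0$ to $H_1$ with $s$-support in a fixed interval is convex, hence contractible: a convex combination of two such homotopies again satisfies $\partial_s(\cdot)\leq 0$ and has the same endpoints. Thus, given two choices $\{H_s^0\}$ and $\{H_s^1\}$, the straight-line path between them, paired with a generic two-parameter family of almost complex structures, produces via the parametrized index-$(-1)$ moduli space a chain homotopy $K$ with $\partial^{H_1}K+K\partial^{H_0}=\Phi_{H_1H_0}^0+\Phi_{H_1H_0}^1$; the same monotonicity estimate as above shows $K$ preserves the filtration, so the two induced maps on $\mathrm{HF}^{[a,b)}$ coincide. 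Applying this with $\{H_s^0\}$ the constant homotopy at $H_0$ (monotone, since $\partial_s H_s\equiv 0$) and $J$ independent of $s$, the isolated solutions are exactly the $s$-independent ones, i.e.\ the elements of $\mathcal{P}(H_0;\alpha)$ each counted once, whence $\Phi_{H_0H_0}=\mathrm{id}$ and $\sigma_{H_0H_0}=\mathrm{id}$.

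For the composition law, assume $H_0\preceq H_1\preceq H_2$ and pick monotone homotopies $\{H_s^{01}\}$ from $H_0$ to $H_1$ and $\{H_s^{12}\}$ from $H_1$ to $H_2$. For a parameter $T>0$ form the concatenated homotopy obtained by inserting a neck of length $T$ on which the Hamiltonian equals $H_1$; since each piece satisfies $\partial_s(\cdot)\leq 0$ and they match smoothly at $H_1$, the concatenation is again a monotone homotopy from $H_0$ to $H_2$, so by the independence just established its continuation map represents $\sigma_{H_2H_0}$ for every $T$. On the other hand, a standard gluing and neck-stretching analysis shows that for $T$ sufficiently large the isolated solutions of the concatenated equation are in bijection with pairs of isolated solutions of the two pieces sharing an intermediate asymptote in $\mathcal{P}(H_1;\alpha)$; this says precisely that the concatenated chain map equals $\Phi_{H_2H_1}\circ\Phi_{H_1H_0}$, and passing to homology gives $\sigma_{H_2H_0}=\sigma_{H_2H_1}\circ\sigma_{H_1H_0}$ on $\mathrm{HF}^{[a,b)}$.

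None of the individual analytic inputs is new: transversality for generic families of data, Gromov compactness in the absence of bubbling, the maximum principle, and gluing are all standard. The genuine work is organizational. One must keep every intermediate object inside the filtered framework, i.e.\ check that all the action estimates produced by monotonicity are strict enough that $\mathrm{CF}^a$ and $\mathrm{CF}^b$ are honestly preserved by the chain maps and chain homotopies (this is where $a,b\notin\mathrm{Spec}(H_i;\alpha)$ enters), and one must arrange the various one- and two-parameter families of Hamiltonians and almost complex structures so that the low-dimensional moduli spaces entering the identities above are cut out transversally. The reduction from possibly degenerate $H_i$ to nearby regular Hamiltonians compatible with the pointwise inequalities $H_0\geq H_1\geq H_2$ must likewise be carried out with care. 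For all of these points I would follow the treatment in \cite{BPS} and the original references \cite{FH,CFH}.
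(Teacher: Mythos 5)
The paper does not prove this lemma; it quotes it from \cite{FH,CFH} (see also \cite[Subsection 4.5]{BPS}), and your outline --- filtration preservation via the monotone energy identity, homotopy independence via a convex interpolation of homotopies and the resulting filtered chain homotopy, the constant homotopy giving the identity, and gluing/neck-stretching for the composition law --- is exactly the standard continuation argument those references carry out. Your sketch is correct at the stated level of detail, so there is nothing to compare beyond noting that it matches the cited proof.
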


\begin{lemma}[\rm \cite{Vi}]\label{lemma:actionregular}
The monotone homomorphism associated to an action-regular monotone homotopy is an isomorphism.
\end{lemma}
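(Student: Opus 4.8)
The plan is to realize $\sigma_{H_1H_0}$ as a finite composition of ``local'' monotone homomorphisms, each of which coincides with the canonical identification of filtered Floer homologies furnished by Proposition~\ref{proposition:nbd} and is therefore an isomorphism, and then to invoke the composition rule of Lemma~\ref{lemma:functoriality}.

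First I would fix an action-regular monotone homotopy $\{H_s\}_{s\in[0,1]}$ from $H_0$ to $H_1$. By definition the path $s\mapsto H_s$ stays inside a single connected component of $\mathcal{H}^{a,b}(M;\alpha)$, and $\partial_sH_s\le0$ gives $H_s\preceq H_{s'}$ for $s\le s'$, so the monotone homomorphism $\sigma_{H_{s'}H_s}$ is defined for all $s\le s'$. For each $s$, Proposition~\ref{proposition:nbd} provides a neighborhood $\mathcal{U}_s$ of $H_s$, which we may take inside that connected component, on which all the groups $\mathrm{HF}^{[a,b)}(\,\cdot\,;\alpha)$ are canonically isomorphic via continuation. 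Using compactness of $[0,1]$ I would pick a partition $0=s_0<s_1<\dots<s_N=1$ such that the sub-path $\{H_s\}_{s\in[s_{i-1},s_i]}$ is contained in one such neighborhood $\mathcal{U}_i$ for each $i$.

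Next I would analyze a single step. The restriction $\{H_s\}_{s\in[s_{i-1},s_i]}$ is a monotone homotopy from $H_{s_{i-1}}$ to $H_{s_i}$ lying entirely in $\mathcal{U}_i$; since by Lemma~\ref{lemma:functoriality} the monotone homomorphism does not depend on the monotone homotopy chosen to define it, $\sigma_{H_{s_i}H_{s_{i-1}}}$ is computed by a continuation map of a homotopy supported in $\mathcal{U}_i$, hence it coincides with the canonical isomorphism $\mathrm{HF}^{[a,b)}(H_{s_{i-1}};\alpha)\to\mathrm{HF}^{[a,b)}(H_{s_i};\alpha)$ of Proposition~\ref{proposition:nbd}. (If an endpoint is degenerate, replace it by a nearby regular Hamiltonian still inside $\mathcal{U}_i$; by Definition~\ref{definition:degenerate} this changes neither the homology nor the homomorphism.) Therefore each $\sigma_{H_{s_i}H_{s_{i-1}}}$ is an isomorphism, and Lemma~\ref{lemma:functoriality} yields
\[
	\sigma_{H_1H_0}=\sigma_{H_{s_N}H_{s_{N-1}}}\circ\cdots\circ\sigma_{H_{s_1}H_{s_0}},
\]
a composition of isomorphisms, which is itself an isomorphism.

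The point I expect to be the main obstacle is the per-step assertion that a monotone homotopy confined to such a neighborhood induces an \emph{isomorphism} -- equivalently, the local invariance of filtered Floer homology along the family. Its proof is a Gromov-compactness argument: since $a$ and $b$ stay outside $\mathrm{Spec}(H_s;\alpha)$ along the whole family, there is a uniform energy gap preventing broken continuation trajectories from crossing the truncation levels, and one concludes that the continuation map on the truncated complex is a quasi-isomorphism. I would not reproduce this argument, citing \cite{Vi} and \cite[Subsection~4.5]{BPS} instead.
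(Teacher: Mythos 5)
Your argument is correct and is essentially the standard proof of this statement: the paper itself gives no proof but cites \cite{Vi} (see also \cite[Subsection 4.5]{BPS}), where the lemma is established by exactly your subdivision-and-composition scheme, with the per-step isomorphism coming from the local invariance of Proposition~\ref{proposition:nbd} and the functoriality of Lemma~\ref{lemma:functoriality}. You have also correctly isolated the only nontrivial analytic input --- that a continuation map between sufficiently close Hamiltonians, with $a,b$ staying off the action spectrum, respects the truncation and is a quasi-isomorphism --- and deferring that to the cited sources is appropriate here.
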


Given $-\infty\leq a<b<c\leq\infty$ and
two Hamiltonians $H_0,H_1\in\mathcal{H}^{a,b}(M;\alpha)\cap\mathcal{H}^{a,c}(M;\alpha)$
satisfying $H_0\preceq H_1$, we obtain the following commutative diagram,
whose rows are the short exact sequences for $H_0$ and for $H_1$.
\[
\xymatrix{
0 \ar[r] & \mathrm{CF}_{\ast}^{[a,b)}(H_0;\alpha) \ar[d]^{\Phi_{H_1H_0}} \ar[r]^{\iota^F}  & \mathrm{CF}_{\ast}^{[a,c)}(H_0;\alpha) \ar[d]^{\Phi_{H_1H_0}} \ar[r]^{\pi^F}  & \mathrm{CF}_{\ast}^{[b,c)}(H_0;\alpha) \ar[r] \ar[d]^{\Phi_{H_1H_0}} & 0 \\
0 \ar[r] & \mathrm{CF}_{\ast}^{[a,b)}(H_1;\alpha) \ar[r]^{\iota^F} & \mathrm{CF}_{\ast}^{[a,c)}(H_1;\alpha) \ar[r]^{\pi^F} & \mathrm{CF}_{\ast}^{[b,c)}(H_1;\alpha) \ar[r] & 0 \\
}
\]
where $\iota^F$ and $\pi^F$ denote the natural inclusion and projection, respectively.
The associated long exact sequences induce the following commutative diagram.
\begin{equation}\label{eq:longexactseq}
\xymatrix{
\cdots \ar[r] & \mathrm{HF}_{\ast}^{[a,b)}(H_0;\alpha) \ar[d]^{\sigma_{H_1H_0}} \ar[r]^{[\iota^F]}  & \mathrm{HF}_{\ast}^{[a,c)}(H_0;\alpha) \ar[d]^{\sigma_{H_1H_0}} \ar[r]^{[\pi^F]}  & \mathrm{HF}_{\ast}^{[b,c)}(H_0;\alpha) \ar[r] \ar[d]^{\sigma_{H_1H_0}} & \cdots \\
\cdots \ar[r] & \mathrm{HF}_{\ast}^{[a,b)}(H_1;\alpha) \ar[r]^{[\iota^F]} & \mathrm{HF}_{\ast}^{[a,c)}(H_1;\alpha) \ar[r]^{[\pi^F]} & \mathrm{HF}_{\ast}^{[b,c)}(H_1;\alpha) \ar[r] & \cdots \\
}
\end{equation}


\subsection{Symplectic homology}\label{subsection:symp}

In this subsection, we consider a homology introduced in \cite{FH,CFH,Ci}. We refer to \cite[Subsection 4.8]{BPS} for details.
Let $\alpha\in [S^1,M]$ be a nontrivial free homotopy class and $a,b\in\mathbb{R}\cup\{\infty\}$ such that $a<b$.
As mentioned in Subsection \ref{subsection:monotone}, there is a natural homomorphism
\[
	\sigma_{H_1H_0}\colon \mathrm{HF}^{[a,b)}(H_0;\alpha)\to \mathrm{HF}^{[a,b)}(H_1;\alpha)
\]
whenever $H_0,H_1\in\mathcal{H}^{a,b}(M;\alpha)$ satisfy $H_0\preceq H_1$.
These homomorphisms define an inverse system of Floer homology groups over $\bigl(\mathcal{H}^{a,b}(M;\alpha),\preceq\bigr)$.
We denote the \textit{symplectic homology} of $M$ in the homotopy class $\alpha$ for the action interval $[a,b)$ by
\[
	{\SH}^{[a,b)}(M;\alpha)=%
	\lim_{\substack{\longleftarrow\\ H\in\mathcal{H}^{a,b}(M;\alpha)}}\mathrm{HF}^{[a,b)}(H;\alpha).
\]
Fix a compact subset $A\subset M$ and a constant $c\in\mathbb{R}$.
We define the set
\[
	\mathcal{H}_c^{a,b}(M,A;\alpha)=%
	\left\{\,H\in\mathcal{H}^{a,b}(M;\alpha)\relmiddle| \inf_{S^1\times A}H>c\,\right\}.
\]
This defines a directed system of Floer homology groups over $\bigl(\mathcal{H}_c^{a,b}(M,A;\alpha),\preceq\bigr)$.
We denote the \textit{relative symplectic homology} of the pair $(M,A)$ at the \textit{level} $c$
in the homotopy class $\alpha$ for the action interval $[a,b)$ by
\[
	{\RSH}^{[a,b);c}(M,A;\alpha)=%
	\lim_{\substack{\longrightarrow\\ H\in\mathcal{H}_c^{a,b}(M,A;\alpha)}}\mathrm{HF}^{[a,b)}(H;\alpha).
\]

\begin{proposition}[{\cite[Proposition 4.8.2]{BPS}}]\label{proposition:diagram}
Let $\alpha\in [S^1,M]$ be a nontrivial homotopy class and suppose that $-\infty\leq a<b\leq\infty$.
Then for any $c\in\mathbb{R}$ there exists a unique homomorphism
\[
	T_{\alpha}^{[a,b);c}\colon {\SH}^{[a,b)}(M;\alpha)\to {\RSH}^{[a,b);c}(M,A;\alpha)
\]
such that for any two Hamiltonians $H_0,H_1\in\mathcal{H}_c^{a,b}(M,A;\alpha)$ with $H_0\geq H_1$
the following diagram commutes.
\[
	\xymatrix{
	{\SH}^{[a,b)}(M;\alpha) \ar[d]_{\pi_{H_0}} \ar[rr]^{T_{\alpha}^{[a,b);c}} & & {\RSH}^{[a,b);c}(M,A;\alpha) \\
	\mathrm{HF}^{[a,b)}(H_0;\alpha) \ar[rr]^{\sigma_{H_1H_0}} & & \mathrm{HF}^{[a,b)}(H_1;\alpha) \ar[u]_{\iota_{H_1}} \\
	}
\]
Here $\pi_{H_0}$ and $\iota_{H_1}$ are the canonical homomorphisms.
In particular, since $\sigma_{HH}=\mathrm{id}$ for any $H\in\mathcal{H}_c^{a,b}(M,A;\alpha)$, the following diagram commutes.
\[
	\xymatrix{
	{\SH}^{[a,b)}(M;\alpha) \ar[rd]_{\pi_H} \ar[rr]^{T_{\alpha}^{[a,b);c}} & & {\RSH}^{[a,b);c}(M,A;\alpha) \\
	& \mathrm{HF}^{[a,b)}(H;\alpha) \ar[ru]_{\iota_H} & \\
	}
\]
\end{proposition}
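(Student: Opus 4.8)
The plan is to construct $T_{\alpha}^{[a,b);c}$ by hand as a composite that factors through a single auxiliary Hamiltonian, and then to check that the composite does not depend on that choice; the commutativity of the two diagrams and uniqueness will then be immediate.

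First I would set up the canonical maps. The assignment $H\mapsto\mathrm{HF}^{[a,b)}(H;\alpha)$ together with the monotone homomorphisms $\sigma_{H_1H_0}$ is functorial by Lemma \ref{lemma:functoriality}, so its limit $\SH^{[a,b)}(M;\alpha)$ over $\mathcal{H}^{a,b}(M;\alpha)$ and the colimit $\RSH^{[a,b);c}(M,A;\alpha)$ of its restriction to $\mathcal{H}_c^{a,b}(M,A;\alpha)$ come equipped with canonical homomorphisms $\pi_H\colon\SH^{[a,b)}(M;\alpha)\to\mathrm{HF}^{[a,b)}(H;\alpha)$, one for each $H\in\mathcal{H}^{a,b}(M;\alpha)$, satisfying $\sigma_{H_1H_0}\circ\pi_{H_0}=\pi_{H_1}$ whenever $H_0\preceq H_1$, and $\iota_H\colon\mathrm{HF}^{[a,b)}(H;\alpha)\to\RSH^{[a,b);c}(M,A;\alpha)$, one for each $H\in\mathcal{H}_c^{a,b}(M,A;\alpha)$, satisfying $\iota_{H_1}\circ\sigma_{H_1H_0}=\iota_{H_0}$ whenever $H_0\preceq H_1$ inside $\mathcal{H}_c^{a,b}(M,A;\alpha)$. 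I would also recall the standard facts (see \cite{BPS}) that $\mathcal{H}_c^{a,b}(M,A;\alpha)$ is nonempty — a compactly supported bump equal to $c+1$ near the compact set $A$, perturbed slightly so that $a$ and $b$ lie outside its action spectrum, belongs to it — and $\preceq$-directed, a smoothing, slightly decreased, of the pointwise minimum of two of its elements again having infimum over $S^1\times A$ strictly above $c$ because $A$ is compact.

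Next I would define $T_{\alpha}^{[a,b);c}:=\iota_H\circ\pi_H$ for an arbitrary $H\in\mathcal{H}_c^{a,b}(M,A;\alpha)$ and verify that this is independent of $H$. Given $H,H'\in\mathcal{H}_c^{a,b}(M,A;\alpha)$, pick $H''\in\mathcal{H}_c^{a,b}(M,A;\alpha)$ with $H\preceq H''$ and $H'\preceq H''$ by directedness; then, by the compatibility relations above (both applicable because $H\preceq H''$ with $H,H''\in\mathcal{H}_c^{a,b}(M,A;\alpha)$),
\[
\iota_H\circ\pi_H=(\iota_{H''}\circ\sigma_{H''H})\circ\pi_H=\iota_{H''}\circ(\sigma_{H''H}\circ\pi_H)=\iota_{H''}\circ\pi_{H''},
\]
and symmetrically $\iota_{H'}\circ\pi_{H'}=\iota_{H''}\circ\pi_{H''}$, whence $\iota_H\circ\pi_H=\iota_{H'}\circ\pi_{H'}$. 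For the first asserted diagram, let $H_0,H_1\in\mathcal{H}_c^{a,b}(M,A;\alpha)$ with $H_0\geq H_1$, i.e.\ $H_0\preceq H_1$; then $\iota_{H_1}\circ\sigma_{H_1H_0}\circ\pi_{H_0}=\iota_{H_1}\circ\pi_{H_1}=T_{\alpha}^{[a,b);c}$ by $\pi$-compatibility and the definition of $T_{\alpha}^{[a,b);c}$. The second (``in particular'') diagram is the case $H_0=H_1=H$, which reads $T_{\alpha}^{[a,b);c}=\iota_H\circ\pi_H$ and holds by construction. For uniqueness, if $\widetilde{T}$ makes the first diagram commute for every admissible pair, then taking $H_0=H_1=H$ and using $\sigma_{HH}=\mathrm{id}$ forces $\widetilde{T}=\iota_H\circ\pi_H=T_{\alpha}^{[a,b);c}$.

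The whole argument is formal, using only the monotone homomorphisms and their functoriality (Lemma \ref{lemma:functoriality}); there is no analytic obstacle. The single delicate point is the independence of the auxiliary Hamiltonian, where the directedness of $\mathcal{H}_c^{a,b}(M,A;\alpha)$ and the two compatibility relations must be combined precisely — in particular one must not assume that $\mathcal{H}_c^{a,b}(M,A;\alpha)$ is cofinal in $\mathcal{H}^{a,b}(M;\alpha)$ (it is not), but merely restrict the projections $\pi_H$ to the subposet.
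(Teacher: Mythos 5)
Your argument is correct and is essentially the standard proof of this statement: the paper itself gives no proof (it quotes \cite[Proposition 4.8.2]{BPS}), and the BPS argument is exactly your formal one — define $T_{\alpha}^{[a,b);c}=\iota_H\circ\pi_H$, use directedness of $\mathcal{H}_c^{a,b}(M,A;\alpha)$ together with the limit/colimit compatibilities $\sigma_{H_1H_0}\circ\pi_{H_0}=\pi_{H_1}$ and $\iota_{H_1}\circ\sigma_{H_1H_0}=\iota_{H_0}$ to show independence of $H$, and read off commutativity and uniqueness from the case $H_0=H_1$.
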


\begin{remark}
As we noted in Remark \ref{remark:0}, we can still define the filtered symplectic homology for $\alpha=0_M$
and the conclusion of Proposition \ref{proposition:diagram} still holds if the action interval $[a,b)$ does not contain zero.
\end{remark}


\section{Computation}\label{section:3}

\subsection{Morse--Bott theory in Floer homology}

We refer to \cite[Subsection 5.2]{BPS} for details.

\begin{definition}[\cite{BPS}]
A subset $P\subset\mathcal{P}(H)$ is called a \textit{Morse--Bott manifold of periodic trajectories for} $H$
if the set $C_0=\{\,x(0)\mid x\in P\,\}$ is a compact submanifold of $M$ and
$T_{x_0}C_0=\Ker\bigl(d\varphi_H^1(x(0))-\mathrm{id}\bigr)$ for any $x_0\in C_0$.
\end{definition}

\begin{theorem}[{\cite[Theorem 5.2.2]{BPS}}]\label{theorem:Pozniak}
Let $-\infty\leq a<b\leq \infty$, $\alpha\in [S^1,M]$, and $H\in\mathcal{H}^{a,b}(M;\alpha)$.
Assume that the set $P=\{\,x\in\mathcal{P}(H;\alpha)\mid a<\mathcal{A}_H(x)<b\,\}$
is a connected Morse--Bott manifold of periodic trajectories for $H$.
Let $g$ be a Riemannian metric on $C_0$ and $f\colon C_0\to\mathbb{R}$ a Morse--Smale function.
Then the Floer homology $\mathrm{HF}_{\ast}^{[a,b)}(H;\alpha)=H_{\ast}\bigl(\mathrm{CF}_{\ast}^{[a,b)}(H;\alpha),\partial_{\ast}\bigr)$
coincides with the Morse homology $\mathrm{HM}_{\ast}(C_0,f,g)=H_{\ast}\bigl(\mathrm{CM}_{\ast}(C_0,f,g),\partial_{\ast}\bigr)$.
Namely, we have
\[
	\mathrm{HF}_{\ast}^{[a,b)}(H;\alpha)%
	\cong\mathrm{HM}_{\dim{C_0}-\ast}(C_0,f,g)%
	\cong H_{\dim{P}-\ast}(P;\mathbb{Z}/2\mathbb{Z}).
\]
\end{theorem}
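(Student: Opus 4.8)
The plan is to reduce Theorem~\ref{theorem:Pozniak} to the classical principle that a $C^2$-small, generic autonomous perturbation supported near the critical manifold turns the filtered Floer complex literally into the Morse complex of that perturbation. First I would record two elementary facts about $P$: since $P$ is connected and consists of critical points of $\mathcal{A}_H$, the functional $\mathcal{A}_H$ is constant on $P$, say $\mathcal{A}_H|_P\equiv e\in(a,b)$; and the evaluation $x\mapsto x(0)$ is a diffeomorphism between $P$ and $C_0$ (this uses exactly the Morse--Bott identity $T_{x(0)}C_0=\Ker(d\varphi_H^1(x(0))-\mathrm{id})$), so $\dim P=\dim C_0$. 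Given the Morse--Smale pair $(f,g)$ on $C_0$, extend $f$ to a function $F$ on $M$ supported in a small tubular neighborhood $U$ of $C_0$ and locally constant in the normal directions along $C_0$, and set $H_\delta=H+\delta F$. For $\delta>0$ small, $H_\delta$ lies in the neighborhood $\mathcal{U}$ of $H$ furnished by Proposition~\ref{proposition:nbd}, its one-periodic orbits with action in $[a,b)$ are exactly the orbits $x_p$ through the points $p\in\mathrm{Crit}(f)$ (with $\mathcal{A}_{H_\delta}(x_p)=e+\delta f(p)+O(\delta^2)$), and each $x_p$ is nondegenerate. A standard index computation shows that the Conley--Zehnder index of $x_p$ equals that of the Morse--Bott family $P$ corrected by the Morse index $\mathrm{ind}_f(p)$, so that in the homological Floer grading of Section~\ref{section:2} the orbit $x_p$ sits in degree $\dim C_0-\mathrm{ind}_f(p)$; this accounts for the shift $*\leftrightarrow\dim C_0-*$ in the statement.

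Next I would identify the differentials. By the energy identity, any Floer cylinder of $H_\delta$ joining two such orbits has energy equal to the difference of their actions, hence $O(\delta)$; a priori $C^0$/$C^1$ estimates then force every finite-energy Floer cylinder of relative index $1$ to lie inside $U$, where the Floer equation is a $\delta$-small perturbation of the time-independent negative gradient equation of $(f,g)$ on $C_0$. An adiabatic-limit argument together with a quantitative implicit function theorem --- choosing $J\in\mathcal{J}_{\mathrm{reg}}(H_\delta;\alpha)$ so that the linearized operator decouples into the (surjective, by Morse--Smale) linearized Morse operator along $C_0$ and an invertible operator in the normal directions --- yields, for $\delta$ small, a $\mathbb{Z}/2$-count-preserving bijection between the rigid Floer cylinders from $x_p$ to $x_q$ and the negative gradient flow lines of $f$ from $p$ to $q$. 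Hence $\mathrm{CF}_*^{[a,b)}(H_\delta;\alpha)\cong\mathrm{CM}_{\dim C_0-*}(C_0,f,g)$ as chain complexes, and invariance of Floer homology under the perturbation from $H$ to $H_\delta$ (Proposition~\ref{proposition:nbd}) gives
\[
	\mathrm{HF}_*^{[a,b)}(H;\alpha)\cong\mathrm{HM}_{\dim C_0-*}(C_0,f,g)\cong H_{\dim C_0-*}(C_0;\mathbb{Z}/2\mathbb{Z})=H_{\dim P-*}(P;\mathbb{Z}/2\mathbb{Z}),
\]
where the middle isomorphism is the fundamental theorem of Morse homology and the last equality uses the diffeomorphism $P\cong C_0$.

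The main obstacle is the analytic correspondence in the second paragraph: proving that the small-energy Floer cylinders near $C_0$ are, after reparametrization, $C^1$-close to Morse flow lines of $(f,g)$, and that this assignment is a genuine bijection uniformly in $\delta$. Concretely this requires (i) a priori estimates localizing the relevant cylinders inside $U$; (ii) a compactness statement ruling out energy concentration or escape from $U$ as $\delta\to0$; and (iii) a gluing / quantitative implicit-function-theorem construction producing from each Morse flow line a unique nearby Floer cylinder, with the $\mathbb{Z}/2$ counts matching. By comparison, the reduction to a neighborhood of $C_0$, the Morse-theoretic input $\mathrm{HM}\cong H_*$, and the Conley--Zehnder index bookkeeping are routine.
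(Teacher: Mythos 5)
This statement is not proved in the paper: it is quoted verbatim from \cite[Theorem 5.2.2]{BPS}, with the original due to Po\'{z}niak \cite[Theorem 3.4.11]{Po}, so there is no in-paper argument to compare against. Your outline reproduces the strategy of the cited proof (perturb by a Morse function extended constantly in the normal directions, identify filtered orbits with critical points of $f$, and match rigid Floer cylinders with gradient flow lines in the adiabatic limit), and it is the same mechanism the authors themselves deploy later via the juxtapositions $H_T\natural(\varepsilon\rho_T F_T)$. Be aware, though, that the step you defer --- the uniform-in-$\delta$ bijection between low-energy Floer cylinders and Morse trajectories --- is not a routine appendix but the entire technical content of Po\'{z}niak's theorem, so as a standalone proof your text is an accurate roadmap rather than a complete argument.
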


The original version of Theorem \ref{theorem:Pozniak} can be found in \cite[Theorem 3.4.11]{Po}.

\begin{remark}
The grading of the Floer homology groups is well-defined up to an additive constant.
More precisely, with a suitable choice of this grading,
$\mathrm{HF}_k^{[a,b)}(H;\alpha)$ is isomorphic to $\mathrm{HM}_{\dim{C_0}-k}(C_0,f,g)$,
and hence to $H_k(P;\mathbb{Z}/2\mathbb{Z})$ if $P$ is orientable.
In the present paper, we choose this grading for simplicity.
\end{remark}


\subsection{Dimensions of symplectic homology groups}

Let $(N,\omega_N)$ be a closed connected symplectic manifold and $X\subset N$ a compact subset.
Let $A_R$ denote the annulus $(-R,R)\times\mathbb{R}/\mathbb{Z}$,
and for $u\in(-R,R)$ we define its subset $L_u=\{u\}\times\mathbb{R}/\mathbb{Z}\subset A_R$.
For $\ell\in\mathbb{Z}$ we put
\[
	\alpha_{\ell}=[t\mapsto (0,\ell t)]\in [S^1,A_R].
\]
We consider the product symplectic manifold $\bigl(A_R\times N,(dp_0\wedge dq_0)\oplus\omega_N\bigr)$
and the free homotopy class $(\alpha_{\ell},0_N)\in[S^1,A_R\times N]$,
where $(p_0,q_0)\in A_R=(-R,R)\times\mathbb{R}/\mathbb{Z}$.

In this subsection, we give an explicit computation of symplectic homology groups in the case that
$(N,\omega_N)=(\mathbb{T}^{2n},\omega_{\mathrm{std}})=%
\bigl((\mathbb{R}/2\mathbb{Z}\times\mathbb{R}/\mathbb{Z})^n,\omega_{\mathrm{std}}\bigr)$
and $X=(\{0\}\times\mathbb{R}/\mathbb{Z})^n\cong \mathbb{T}^n$.
In the following theorem, let $0$ denote the trivial homotopy class $(\alpha_0,0_{\mathbb{T}^{2n}})\in[S^1,A_R\times\mathbb{T}^{2n}]$.

\begin{theorem}\label{theorem:main0}
Let $R>0$ and $u$ be real numbers such that $u\in(-R,R)$.
Then for any $a>0$ and any $c>0$, we have
\[
	{\SH}^{[a,\infty)}(A_R\times \mathbb{T}^{2n};0)\cong
	H_{\ast}(\mathbb{T}^{2n+1};\mathbb{Z}/2\mathbb{Z}),
\]
and
\[
	{\RSH}^{[a,\infty);c}(A_R\times \mathbb{T}^{2n},L_u\times \mathbb{T}^n;0)=%
	\begin{cases}
		H_{\ast}(\mathbb{T}^{n+1};\mathbb{Z}/2\mathbb{Z}) & \text{if\: $0<a\leq c$}, \\
		0 & \text{if\: $a>c$}.
	\end{cases}
\]
\end{theorem}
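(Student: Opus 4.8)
The plan is to compute each side by exhibiting an explicit Hamiltonian whose periodic trajectories in the class $0=(\alpha_0,0_{\mathbb{T}^{2n}})$ form a single Morse--Bott manifold in the appropriate action window, apply Po\'{z}niak's theorem (Theorem \ref{theorem:Pozniak}), and then let the Hamiltonian run through a cofinal family to pass to the (inverse or direct) limit.

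\textbf{Computation of ${\SH}^{[a,\infty)}(A_R\times\mathbb{T}^{2n};0)$.}
First I would take Hamiltonians depending only on the first coordinate $p_0$ of $\overline{A_R}=[-R,R]\times\mathbb{R}/\mathbb{Z}$, of the form $H=h(p_0)$ with $h$ a small convex bump supported in the interior, together with a small Morse perturbation. For such $H$ the Hamiltonian flow is $p_0=\mathrm{const}$, $q_0\mapsto q_0+h'(p_0)t$, and the dynamics on the $\mathbb{T}^{2n}$ factor is trivial; a one-periodic trajectory in the class $0$ sits on a level set $\{p_0=p_\ast\}$ with $h'(p_\ast)\in\mathbb{Z}$, and is contractible in the $A_R$ factor exactly when $h'(p_\ast)=0$. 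For $h$ a generic small bump there is a single such critical level, on which $\mathcal{P}(H;0)$ is the torus $\{p_\ast\}\times(\mathbb{R}/\mathbb{Z})\times\mathbb{T}^{2n}\cong\mathbb{T}^{2n+1}$, and the nondegeneracy condition $T_{x_0}C_0=\Ker(d\varphi_H^1-\mathrm{id})$ holds because $h''(p_\ast)\neq0$ kills the remaining direction. The action of a constant-$H$-value critical trajectory can be computed directly from the definition of $\mathcal{A}_H$ in Section \ref{section:2} and made to lie in $(0,\infty)$; so the whole Morse--Bott manifold $P\cong\mathbb{T}^{2n+1}$ lies in the window $(a,\infty)$ once $a>0$ is below the relevant value, and Theorem \ref{theorem:Pozniak} gives $\mathrm{HF}^{[a,\infty)}(H;0)\cong H_\ast(\mathbb{T}^{2n+1};\mathbb{Z}/2\mathbb{Z})$. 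Finally, such bump Hamiltonians with arbitrarily large $\sup h$ are cofinal in $(\mathcal{H}^{a,\infty}(A_R\times\mathbb{T}^{2n};0),\preceq)$, and along a monotone homotopy within this family the Morse--Bott picture persists, so each monotone map $\sigma_{H_1H_0}$ is an isomorphism; passing to the inverse limit yields ${\SH}^{[a,\infty)}(A_R\times\mathbb{T}^{2n};0)\cong H_\ast(\mathbb{T}^{2n+1};\mathbb{Z}/2\mathbb{Z})$. (One must take care that $A_R\times\mathbb{T}^{2n}$ has no convex boundary; as indicated in Section \ref{section:2}, the Floer homology of the product is nevertheless defined via \cite{FS}, and the usual compactness persists because the $h(p_0)$-dynamics is confined to a compact subset of the interior.)

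\textbf{Computation of ${\RSH}^{[a,\infty);c}(A_R\times\mathbb{T}^{2n},L_u\times\mathbb{T}^n;0)$.}
Here I would use Hamiltonians of the form $H=h(p_0)+g$, where $g$ is a small Morse function on the $\mathbb{T}^{2n}$ factor whose restriction to $\mathbb{T}^n=(\{0\}\times\mathbb{R}/\mathbb{Z})^n$ is nice, chosen so that $\inf_{S^1\times(L_u\times\mathbb{T}^n)}H>c$; the constraint $\inf H>c$ on the compact subset forces $h(u)+\min g>c$, i.e. essentially $h(u)>c$. The direct system $\mathcal{H}_c^{a,\infty}(M,A;0)$ is ordered by $\preceq$ (pointwise $\geq$), so a \emph{cofinal} family is obtained by taking $h$ as \emph{small as possible} subject to $h(u)>c$: concretely, let $h$ be a bump of height just over $c$ peaked near $p_0=u$ and tapering to $0$ at $p_0=\pm R$. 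For such minimal $h$, the critical trajectories in the class $0$ again live on the level(s) where $h'=0$; the "plateau"/peak at height $h(u)\approx c$ contributes a Morse--Bott manifold that, after incorporating the Morse function $g$ on $\mathbb{T}^{2n}$, is of the form (circle in $q_0$) $\times\,\mathbb{T}^{2n}$ but with the action constraint $a>0$ selecting only the part coming from the peak together with the $\mathbb{T}^n$-worth of $g$-critical points surviving in the window. The claim is that this surviving locus is $\mathbb{T}^{n+1}$, and that its action lies in $(0,c]$ precisely when $0<a\le c$ (so it is seen by $[a,\infty)$) and exceeds $c$ for no admissible $H$ when $a>c$ (so $\mathrm{HF}^{[a,\infty)}(H;0)=0$ for all cofinal $H$, giving $0$ in the limit). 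For $0<a\le c$, Theorem \ref{theorem:Pozniak} gives $\mathrm{HF}^{[a,\infty)}(H;0)\cong H_\ast(\mathbb{T}^{n+1};\mathbb{Z}/2\mathbb{Z})$ for each cofinal $H$, and the monotone maps between two cofinal such Hamiltonians are isomorphisms (the Morse--Bott manifolds and their actions vary continuously and stay in the window), so the direct limit is $H_\ast(\mathbb{T}^{n+1};\mathbb{Z}/2\mathbb{Z})$.

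\textbf{Main obstacle.}
The delicate point is the second computation: making the action bookkeeping completely rigorous. I must show that for a \emph{cofinal} family of admissible Hamiltonians the relevant periodic trajectories really do organize into a single connected Morse--Bott manifold diffeomorphic to $\mathbb{T}^{n+1}$ sitting inside the action window $(a,\infty)$, and simultaneously that \emph{no other} periodic orbits in class $0$ sneak into that window (e.g.\ orbits on other critical levels of $h$, or orbits using nonzero winding that happen to be nullhomotopic because $\mathbb{R}/2\mathbb{Z}$ has a $\mathbb{Z}/2$ worth of extra loops---this is exactly why the $\mathbb{T}^{2n}=(\mathbb{R}/2\mathbb{Z}\times\mathbb{R}/\mathbb{Z})^n$ normalization matters, and why one gets $\mathbb{T}^{n+1}$ rather than $\mathbb{T}^{2n+1}$). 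This requires a careful choice of the profile $h$ (so that its "descending" slopes do not produce critical levels, or produce only ones with action outside the window) and a precise evaluation of $\mathcal{A}_H$ on each trajectory, using the reference loop and the atoroidality hypothesis so that $\mathcal{A}_H$ is well-defined. Everything else---Po\'{z}niak's theorem, cofinality, invariance of monotone maps in a Morse--Bott family, the passage to the limit---is then routine given the results quoted in the excerpt.
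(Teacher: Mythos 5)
Your computation of ${\SH}^{[a,\infty)}(A_R\times\mathbb{T}^{2n};0)$ is essentially the paper's argument: tall compactly supported bumps $h(p_0)$, whose only contractible (hence constant) orbits of positive action form the critical level $\{p_0=p_\ast\}\cong\mathbb{T}^{2n+1}$, Po\'{z}niak's theorem, and cofinality as $\sup h\to\infty$. The second half, however, has a genuine gap, and it is not the action bookkeeping you flag as the ``main obstacle'' --- it is the choice of the family itself. A family of the form $H=h(p_0)+g$ with $g$ a \emph{small} function on $\mathbb{T}^{2n}$ and $h$ tapering to $0$ at $p_0=\pm R$ is \emph{not} cofinal in $\bigl(\mathcal{H}_c^{a,\infty}(M,L_u\times\mathbb{T}^n;0),\preceq\bigr)$: an admissible Hamiltonian need only exceed $c$ on $L_u\times\mathbb{T}^n=\{p_0=u,\ p_1=\cdots=p_n=0\}$ and may be arbitrarily negative at points of $\{p_0=u\}$ with $(p_1,\dots,p_n)\neq 0$, so no member of your family lies below it. Worse, even if one ignores cofinality, your Hamiltonians produce the wrong homology: the constant orbits of $h(p_0)+g$ with positive action form $\{p_0=p_\ast\}\times\mathrm{Crit}(g)$, whose Morse--Bott/Po\'{z}niak contribution assembles to $H_{\ast}(S^1)\otimes H_{\ast}(\mathbb{T}^{2n})\cong H_{\ast}(\mathbb{T}^{2n+1})$, not $H_{\ast}(\mathbb{T}^{n+1})$; a Morse $g$ has isolated critical points, so there is no ``$\mathbb{T}^n$-worth of $g$-critical points'' for the action window to select. (Your aside about $\mathbb{R}/2\mathbb{Z}$ contributing ``extra loops'' is also off the mark: loops in a circle of circumference $2$ are still classified by $\mathbb{Z}$.)

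The missing idea is the shape of the cofinal Hamiltonians. The paper takes, for $s\leq -1$, a bump $f_s\bigl(\lVert p-\overline{u}\rVert/m_u\bigr)$ in \emph{all} $n+1$ of the $p$-coordinates centered at $\overline{u}=(u,0,\dots,0)$, glued to the constant $s$ on the bulk of $A_R\times\mathbb{T}^{2n}$ (and to functions of $p_0$ alone near $p_0=\pm R$ to retain compact support), with peak height $f_s(0)\downarrow c$ and $s\to-\infty$. Cofinality then holds because the region where $H_s>c$ shrinks onto $L_u\times\mathbb{T}^n$ while $H_s\to-\infty$ elsewhere; and the critical manifold at the peak is $\{p=\overline{u}\}$, i.e.\ the torus of $q$-coordinates $(\mathbb{R}/\mathbb{Z})^{n+1}\cong\mathbb{T}^{n+1}$ with action $f_s(0)>c$ --- this, not any counting of loops, is why $\mathbb{T}^{n+1}$ appears. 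All other constant orbits (on the plateau $H_s=s$ or outside the support) have action $\leq 0$ and are excluded by $a>0$. With this family the case distinction is immediate: for $0<a\leq c$ every $H_s$ has $f_s(0)>c\geq a$ and $\iota_{H_s}$ is an isomorphism, giving $H_{\ast}(\mathbb{T}^{n+1};\mathbb{Z}/2\mathbb{Z})$; for $a>c$ one has $f_s(0)<a$ for $s\ll -1$, so the cofinal groups vanish and the direct limit is $0$.
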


For $\ell\neq 0$, we have the following result.
For the sake of brevity, we put
\[
	\hat{\alpha}_{\ell}=(\alpha_{\ell},0_{\mathbb{T}^{2n}})\in[S^1,A_R\times\mathbb{T}^{2n}].
\]

\begin{theorem}\label{theorem:main}
Let $R>0$ and $u$ be real numbers such that $u\in(-R,R)$, and $\ell\in\mathbb{Z}\setminus\{0\}$.
Then for any $a\in\mathbb{R}$ and $c>\max\{{u\ell,0\}}$, we have
\[
	{\SH}^{[a,\infty)}(A_R\times \mathbb{T}^{2n};\hat{\alpha}_{\ell})=%
	\begin{cases}
		0 & \text{if\: $a<R\lvert\ell\rvert$}, \\
		H_{\ast}(\mathbb{T}^{2n+1};\mathbb{Z}/2\mathbb{Z}) & \text{if\: $a\geq R\lvert\ell\rvert$},
	\end{cases}
\]
and
\[
	{\RSH}^{[a,\infty);c}(A_R\times \mathbb{T}^{2n},L_u\times \mathbb{T}^n;\hat{\alpha}_{\ell})=%
	\begin{cases}
		H_{\ast}(\mathbb{T}^{n+1};\mathbb{Z}/2\mathbb{Z}) & \text{if\: $0<a\leq c-u\ell$}, \\
		0 & \text{if\: $a>c-u\ell$}.
	\end{cases}
\]
\end{theorem}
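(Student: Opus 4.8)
The plan is to compute the two symplectic homologies in Theorem~\ref{theorem:main} by exhibiting, for each action interval, a cofinal family of Hamiltonians whose Floer homology can be evaluated by the Morse--Bott method of Theorem~\ref{theorem:Pozniak}, and then passing to the (co)limit. For the absolute symplectic homology ${\SH}^{[a,\infty)}(A_R\times\mathbb{T}^{2n};\hat\alpha_\ell)$ the natural model Hamiltonians are functions of the $A_R$-coordinate alone, of the form $H(p_0,q_0,z)=h(p_0)$ with $h$ a compactly supported profile that is negative and steeply decreasing near $p_0=\pm R$; the one-periodic orbits in class $\hat\alpha_\ell$ are those circles $\{p_0=p_*\}\times\{q_0\mapsto q_0+\ell t\}\times\{z_0\}$ where $h'(p_*)=\ell$, and such a circle sweeps out, as $z_0$ ranges over $\mathbb{T}^{2n}$, a copy of $\mathbb{T}^{2n+1}$ which is a Morse--Bott manifold. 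A direct computation of the action of this orbit family gives a value that, as the profile is made cofinally steep, tends to $-R|\ell|$ (or $R|\ell|$ depending on the sign side); so for $a<R|\ell|$ one can arrange all such orbits to have action below $a$, forcing $\mathrm{HF}^{[a,\infty)}=0$ cofinally, while for $a\ge R|\ell|$ a single Morse--Bott family of action above $a$ survives and contributes $H_*(\mathbb{T}^{2n+1};\mathbb{Z}/2\mathbb{Z})$; one then checks that the monotone maps in the inverse system are eventually isomorphisms (an action-regular argument via Lemma~\ref{lemma:actionregular}), which identifies the inverse limit.

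For the relative symplectic homology ${\RSH}^{[a,\infty);c}(A_R\times\mathbb{T}^{2n},L_u\times\mathbb{T}^n;\hat\alpha_\ell)$ the cofinal direction is towards Hamiltonians that are large (at least $c$) on a neighborhood of $L_u\times\mathbb{T}^n$ but still compactly supported in $A_R\times\mathbb{T}^{2n}$. The idea is to use model Hamiltonians that depend on $p_0$ and on the $\mathbb{T}^{2n}$-variables through the first ``long'' factor: roughly $H=h(p_0)+\sum_i f_i(z_i)$ where on the copies $\mathbb{R}/2\mathbb{Z}$ the profile $f_i$ is a bump peaked near the $\mathbb{T}^n=(\{0\}\times\mathbb{R}/\mathbb{Z})^n$ slice. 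One arranges that the only surviving one-periodic orbits in class $\hat\alpha_\ell$ lie over a Morse--Bott manifold diffeomorphic to $\mathbb{T}^{n+1}$ (the $L_u$-circle-with-winding together with the $\mathbb{T}^n$ worth of flat directions in $z$), giving $H_*(\mathbb{T}^{n+1};\mathbb{Z}/2\mathbb{Z})$, and that the action of this family is approximately $c-u\ell$; hence for $0<a\le c-u\ell$ this family lies in the window and contributes, while for $a>c-u\ell$ one can push the profile so that no orbit of action $\ge a$ exists, giving $0$. As before the direct-limit structure must be checked: the maps between cofinal Hamiltonians should be action-regular on the relevant range so the colimit is just the stable value.

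The cleanest way to organize both computations is to reduce everything to the split structure $A_R\times\mathbb{T}^{2n}=A_R\times(\mathbb{R}/2\mathbb{Z}\times\mathbb{R}/\mathbb{Z})^n$ and invoke the product formula for Floer homology of \cite[\textsc{Products}, Section 3]{FS}: the class $\hat\alpha_\ell$ is the product of $\alpha_\ell$ on $A_R$ with the trivial class on each $\mathbb{R}/2\mathbb{Z}\times\mathbb{R}/\mathbb{Z}$. Then ${\SH}^{[a,\infty)}$ factors as a tensor product of the one-dimensional annulus computation (which gives $0$ below $R|\ell|$ and $H_*(S^1)$ above, by the winding/action analysis above) with $n$ copies of $H_*(\mathbb{T}^2)$ and an extra $H_*(S^1)$ from the Novikov/$p_0$-direction, yielding $H_*(\mathbb{T}^{2n+1})$ in the appropriate range; ${\RSH}^{[a,\infty);c}$ similarly factors as the annulus-relative-to-$L_u$ computation tensored with $n$ copies of $H_*(\mathbb{T})$ coming from the $\mathbb{T}^n$-directions. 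The main obstacle is the careful bookkeeping of the action values: one must verify that with the specified reference loop in $\hat\alpha_\ell$ the action of the surviving Morse--Bott family equals (in the cofinal limit) exactly $R|\ell|$ in the absolute case and $c-u\ell$ in the relative case, including getting the linear-in-$u\ell$ shift right, and that all \emph{other} orbit families can genuinely be pushed out of the action window $[a,\infty)$ by an admissible (compactly supported, level-$\ge c$) choice --- this is where the interplay between the cofinality requirement $\inf_{S^1\times L_u\times\mathbb{T}^n}H>c$ and the constraint of compact support is delicate, and it is precisely this point that makes $T_\alpha^{[a,\infty);c}$ fail to be an isomorphism, so the argument cannot simply quote the earlier literature.
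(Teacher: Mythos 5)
Your overall strategy (cofinal model Hamiltonians, Po\'{z}niak's theorem applied to Morse--Bott families, action bookkeeping, passage to the limit) is indeed the paper's strategy, but two of your key steps would fail as stated. First, the vanishing of ${\SH}^{[a,\infty)}(A_R\times\mathbb{T}^{2n};\hat{\alpha}_{\ell})$ for $a<R\lvert\ell\rvert$ does \emph{not} come from pushing the orbits below the action level $a$. For the Hamiltonians $h(p_0)$ with a high, wide plateau that are cofinal for the inverse limit, every orbit in class $\hat{\alpha}_{\ell}$ sits in the steep region near $p_0=\mp R$ with $h\geq 0$ there, so its action $h(p_{\ast})-p_{\ast}\ell$ is bounded below by approximately $R\lvert\ell\rvert>a$ and cannot be made less than $a$; your own action computation (``tends to $R\lvert\ell\rvert$'') contradicts the arrangement you then claim. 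The correct mechanism is the opposite one: cofinally \emph{all} orbits have action $>a$, hence $\mathrm{HF}^{[a,\infty)}(H_s;\hat{\alpha}_{\ell})\cong\mathrm{HF}^{(-\infty,\infty)}(H_s;\hat{\alpha}_{\ell})$, and the latter vanishes because the total Floer homology in a nontrivial free homotopy class is independent of the Hamiltonian and is computed by a $C^2$-small one, which has no noncontractible orbits. Without this step you have no proof of the first case.

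Second, the proposed ``cleanest'' route via a K\"unneth-type product formula is not available: \cite[\textsc{Products}, Section 3]{FS} only serves to \emph{define} Floer homology on the product; the action of a product orbit is the sum of the actions, so the filtered groups do not split as a tensor product of filtered groups, and the level-$c$ constraint $\inf_{S^1\times L_u\times\mathbb{T}^n}H>c$ is not a product condition at all. Relatedly, your additive model $h(p_0)+\sum_i f_i(z_i)$ for the relative homology produces $2^n$ separate Morse--Bott components diffeomorphic to $\mathbb{T}^{n+1}$ (one for each choice of critical point of each $f_i$), not a single connected one, so Theorem~\ref{theorem:Pozniak} does not directly yield $H_{\ast}(\mathbb{T}^{n+1};\mathbb{Z}/2\mathbb{Z})$; the paper instead uses a radially symmetric bump $f_s\bigl(\lVert p-\overline{u}\rVert/m_u\bigr)$ in the $(p_0,p_1,\dots,p_n)$-variables around $\overline{u}$, whose orbit set in class $\hat{\alpha}_{\ell}$ is a single $\mathbb{T}^{n+1}$, together with a check (condition (viii)) that the extra boundary families $\mathcal{R}(r,\hat{\alpha}_{\ell})$ have negative action. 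Finally, in the surviving cases ($a\geq R\lvert\ell\rvert$, resp.\ $0<a\leq c-u\ell$) each slope condition is attained at \emph{two} tangency points, and one must verify, as the paper does via the max/min structure of $f_s(r)-Rr\ell$ (resp.\ $f_s(r)-(u+m_ur)\ell$), that exactly one of the two resulting families remains above $a$; your sketch does not address this.
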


\begin{theorem}\label{theorem:main2}
Let $R>0$ and $u$ be real numbers such that $u\in(-R,R)$ and let $\ell\in\mathbb{Z}$.
The homomorphism
\[
	\left(T_{\hat{\alpha}_{\ell}}^{[a,\infty);c}\right)_k\colon {\SH}{}_k^{[a,\infty)}(A_R\times \mathbb{T}^{2n};\hat{\alpha}_{\ell})%
	\to {\RSH}{}_k^{[a,\infty);c}(A_R\times \mathbb{T}^{2n},L_u\times \mathbb{T}^n;\hat{\alpha}_{\ell})
\]
is non-zero if and only if $R\lvert\ell\rvert<a\leq c-u\ell$ and $k=0,1,\ldots,n+1$.
Moreover, in this case, the homomorphism $\left(T_{\hat{\alpha}_{\ell}}^{[a,\infty);c}\right)_k$ is surjective and
\[
	\dim_{\mathbb{Z}/2\mathbb{Z}}\left(\Image\left(T_{\hat{\alpha}_{\ell}}^{[a,\infty);c}\right)_k\right)=%
	b_k(\mathbb{T}^{n+1}),
\]
where $b_k(\mathbb{T}^{n+1})$ is the $k$-th Betti number of $\mathbb{T}^{n+1}$.
\end{theorem}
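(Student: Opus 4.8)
The plan is to deduce everything from Theorems \ref{theorem:main0} and \ref{theorem:main} together with the defining commutative diagram for $T^{[a,\infty);c}_{\hat\alpha_\ell}$ in Proposition \ref{proposition:diagram}. First I would dispose of the trivial cases: whenever the source ${\SH}^{[a,\infty)}(A_R\times\mathbb{T}^{2n};\hat\alpha_\ell)$ vanishes (that is, $\ell\neq0$ and $a<R|\ell|$) or the target ${\RSH}^{[a,\infty);c}$ vanishes (that is, $a>c-u\ell$), the homomorphism is automatically zero, and these two conditions together with the standing hypothesis $c>\max\{u\ell,0\}$ carve out exactly the complementary region to $R|\ell|<a\le c-u\ell$. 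Note also that when $\ell=0$ the constraint $R|\ell|<a$ reads $0<a$, which is already part of the hypothesis in Theorem \ref{theorem:main0}, so the statement is uniform in $\ell$. The substantive content is therefore: in the range $R|\ell|<a\le c-u\ell$, the map $T^{[a,\infty);c}_{\hat\alpha_\ell}$ is surjective with image of rank $b_k(\mathbb{T}^{n+1})$ in each degree $k$, hence in particular non-zero precisely for $k=0,\dots,n+1$.

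For surjectivity I would argue as follows. By Proposition \ref{proposition:diagram}, for any single $H\in\mathcal{H}^{a,\infty}_c(A_R\times\mathbb{T}^{2n},L_u\times\mathbb{T}^n;\hat\alpha_\ell)$ the map $T^{[a,\infty);c}_{\hat\alpha_\ell}$ factors as $\iota_H\circ\pi_H$, where $\pi_H$ is the canonical map from the inverse limit and $\iota_H$ the canonical map into the direct limit. So it suffices to produce one admissible Hamiltonian $H$ for which $\pi_H$ is surjective and $\iota_H$ is an isomorphism — this is precisely the strategy used in \cite{BPS,Ni,Xu}, realized there via a Morse–Bott model computed by Po\'zniak's theorem (Theorem \ref{theorem:Pozniak}). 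I expect the proofs of Theorems \ref{theorem:main0} and \ref{theorem:main} to have already set up such cofinal Morse–Bott Hamiltonians whose Floer homology in the window $[a,\infty)$ is $H_*(\mathbb{T}^{n+1};\mathbb{Z}/2\mathbb{Z})$ on the relative side and $H_*(\mathbb{T}^{2n+1};\mathbb{Z}/2\mathbb{Z})$ on the absolute side, with the monotone homomorphisms between them induced by an inclusion $\mathbb{T}^{n+1}\hookrightarrow\mathbb{T}^{2n+1}$ of the critical manifolds. Identifying the chain-level maps $\iota_H$, $\pi_H$ with the maps induced on homology by this inclusion of tori, surjectivity of $T^{[a,\infty);c}_{\hat\alpha_\ell}$ reduces to surjectivity of $H_k(\mathbb{T}^{n+1})\to H_k(\mathbb{T}^{n+1})$, i.e.\ to the relevant leg being an isomorphism in the stable range. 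The rank statement $\dim\Image = b_k(\mathbb{T}^{n+1})$ then follows immediately, and since $b_k(\mathbb{T}^{n+1})>0$ exactly for $0\le k\le n+1$, the "non-zero" clause drops out.

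The main obstacle, and the point the authors flag in the introduction, is that $T^{[a,\infty);c}_{\hat\alpha_\ell}$ is \emph{not} an isomorphism here, so one cannot simply quote "action-regular homotopy" (Lemma \ref{lemma:actionregular}) to get the whole map at once; one must track which part of the absolute homology survives into the relative homology. Concretely, the delicate step is showing that the composite $\iota_H\circ\pi_H$ really does hit all of ${\RSH}^{[a,\infty);c}$ rather than a proper subspace — this requires checking that the relevant monotone continuation maps, computed via Theorem \ref{theorem:Pozniak} on the nested Morse–Bott manifolds, are the \emph{identity} (or an isomorphism) on $H_*(\mathbb{T}^{n+1})$ and not merely injective or some smaller map; equivalently, that no further quotient occurs when passing to the direct limit over $\mathcal{H}^{a,\infty}_c$. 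I would handle this by exhibiting an explicit cofinal sequence of admissible Hamiltonians (autonomous, radial in the $A_R$-factor, small in the $\mathbb{T}^{2n}$-factor) along which the continuation maps are visibly action-regular once restricted to the surviving generators, so that Lemma \ref{lemma:actionregular} applies to the relevant sub-quotient even though it fails for the full complex. The boundary behaviour of the non-convex product $\overline{A_R}\times\mathbb{T}^{2n}$ is dealt with by the recipe of \cite{FS} already invoked in Section \ref{section:2}, so no new compactness input is needed beyond what Theorems \ref{theorem:main0} and \ref{theorem:main} already require.
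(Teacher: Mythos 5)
Your reduction of the problem is correct as far as it goes: the vanishing statements in Theorem \ref{theorem:main0} and in Steps \ref{step:1} and \ref{step:3} of Theorem \ref{theorem:main} dispose of the range outside $R\lvert\ell\rvert<a\leq c-u\ell$, and Proposition \ref{proposition:diagram} together with the isomorphisms $\pi_{H_T}$ (Step \ref{step:2}) and $\iota_{H_{-T}}$ (Step \ref{step:4}) reduces everything to showing that the single monotone homomorphism $\sigma_{H_{-T}H_T}\colon \mathrm{HF}^{[a,\infty)}(H_T;\hat{\alpha}_{\ell})\cong H_{\ast}(\mathbb{T}^{2n+1})\to \mathrm{HF}^{[a,\infty)}(H_{-T};\hat{\alpha}_{\ell})\cong H_{\ast}(\mathbb{T}^{n+1})$ is surjective. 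This is exactly where the paper's proof also arrives. But your proposal stops at the point where the real work begins: the assertion that this continuation map ``is induced by an inclusion $\mathbb{T}^{n+1}\hookrightarrow\mathbb{T}^{2n+1}$ of the critical manifolds'' and is an isomorphism ``in the stable range'' is precisely the content of the theorem, not a step towards it. Your proposed justification --- that the homotopy is ``visibly action-regular once restricted to the surviving generators'' --- does not make sense as stated: action-regularity (Lemma \ref{lemma:actionregular}) is a property of the homotopy $\{H_s\}$ relative to a fixed action window, not of a subset of generators, and any monotone homotopy from $H_T$ to $H_{-T}$ \emph{must} cross the level $a$, since the critical manifold $\mathbb{T}^{2n+1}$ sitting above $a$ at $s=T$ has to shrink to $\mathbb{T}^{n+1}$ at $s=-T$. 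There is no sub-quotient to which Lemma \ref{lemma:actionregular} applies.

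What is missing is the mechanism by which the crossing of the level $a$ is controlled. The paper does this by: (1) perturbing $H_{\pm T}$ by juxtaposition with $\varepsilon\rho_{\pm T}F_{\pm T}$, where $F_T$ is a Morse function on $C_T\cong\mathbb{T}^{2n+1}$ engineered so that its superlevel set $\{F_T>-1\}$ is exactly $e^n\times\mathbb{T}^{n+1}$; (2) inserting an auxiliary action level $b$ that separates the perturbed orbits over $e^n\times\mathbb{T}^{n+1}$ from the rest, and an interpolating Hamiltonian $\widetilde{H}$ with $\widetilde{H}_T\preceq\widetilde{H}\preceq\widetilde{H}_{-T}$ having no orbits of action in $[a,b)$; (3) running the commutative ladder of long exact sequences \eqref{eq:longexactseq} for the windows $[a,b)$, $[a,\infty)$, $[b,\infty)$, where the two outer continuation maps \emph{are} action-regular (Claims \ref{claim:3} and \ref{claim:5}) and $[\pi^F]$ for $\widetilde{H}$ is an isomorphism (Claim \ref{claim:4}); and (4) converting surjectivity of $\sigma_{H_{-T}H_T}$ into non-surjectivity of $[\iota^F]_k\colon\mathrm{HF}_k^{[a,b)}(\widetilde{H}_T)\to\mathrm{HF}_k^{[a,\infty)}(\widetilde{H}_T)$, which is settled by the explicit dimension count $\dim\mathrm{HF}_k^{[a,b)}(\widetilde{H}_T)=b_k(\mathbb{T}^{2n+1})-b_k(\mathbb{T}^{n+1})$ of Claim \ref{claim:6}, via Po\'{z}niak's theorem applied to $(\mathbb{T}^n\setminus e^n)\times\mathbb{T}^{n+1}$. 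Without some version of steps (1)--(4) --- in particular without the auxiliary level $b$ and the dimension count that shows the part of $H_{\ast}(\mathbb{T}^{2n+1})$ killed before reaching $H_{\ast}(\mathbb{T}^{n+1})$ has exactly complementary rank --- your argument establishes only that the target has the right size, not that the map onto it is surjective or even non-zero.
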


We shall denote a point in $M=A_R\times \mathbb{T}^{2n}$ by
\[
	x=(p,q)=(p_0,q_0;p_1,q_1,\ldots,p_n,q_n),
\]
where $(p_0,q_0)\in A_R=(-R,R)\times\mathbb{R}/\mathbb{Z}$
and $(p_1,q_1,\ldots,p_n,q_n)\in \mathbb{T}^{2n}=(\mathbb{R}/2\mathbb{Z}\times\mathbb{R}/\mathbb{Z})^n$.
Then we have
\[
	L_u\times \mathbb{T}^n=\{\,p_0=u\ \text{and}\ p_1=\cdots=p_n=0\,\}.
\]
We put $m_u=\min\{1,R-\lvert u\rvert\}$
and $\overline{u}=(\overline{u}_0,\overline{u}_1,\ldots,\overline{u}_n)=(u,0,\ldots,0)\in (-R,R)\times (\mathbb{R}/2\mathbb{Z})^n$.


\subsection{Proof of Theorem \ref{theorem:main0}}

\begin{proof}
The proof of \cite[Theorem 5.1.1]{BPS} carries over almost literally.
Fix a positive real number $c>0$
and choose a smooth family of real functions $\{f_s(r)\}_{s\in\mathbb{R}}$,
defined for $r\in\mathbb{R}$, with the following properties (cf.\ \cite[Subsection 5.4]{BPS}).
\begin{enumerate}
	\item $f_s(-r)=f_s(r)$ for all $s$ and $r$.
	\item For any $s\in\mathbb{R}$
	\[
		f_s(0)>c,\quad f_s''(0)<0,\quad \lim_{s\to -\infty}f_s(0)=c,\quad \lim_{s\to\infty}f_s(0)=\infty.
	\]
	\item For all $s$ and $r$ we have $\partial_s f_s(r)\geq 0$.
	\item If $s\geq 1$, then
	\[
		f_s(r) =%
		\begin{cases}
			f_s(0)(1-r^2) & \text{if\: $0\leq r\leq 1-\frac{1}{4s}$}, \\
			0 & \text{if\: $r\geq 1-\frac{1}{8s}$},
		\end{cases}
	\]
	and $f'_s(r)\leq 0$ for all $r\geq 0$.
	\item If $s\leq -1$, then
	\[
		f_s(r) =%
		\begin{cases}
			f_s(0)(1-r^2) & \text{if\: $0\leq r\leq\frac{1}{8\lvert s\rvert}$}, \\
			s & \text{if\: $\frac{1}{4\lvert s\rvert}\leq r\leq 1-\frac{1}{4\lvert s\rvert}$}, \\
			0 & \text{if\: $r\geq 1-\frac{1}{8\lvert s\rvert}$},
		\end{cases}
	\]
	$f'_s(r)\leq 0$ for $r\leq 1/2$ and $f'_s(r)\geq 0$ for $r\geq 1/2$.
	\item For any $s$ the only critical point $r$ of $f_s$ with $f_s(r)>0$ is $r=0$.
\end{enumerate}

Now choose a family of Hamiltonians $H_s\colon A_R\times \mathbb{T}^{2n}\to\mathbb{R}$ with compact support so that
for $s\geq 1$
\[
	H_s(p,q)=f_s\left(\frac{p_0}{R}\right),
\]
and for $s\leq -1$
\[
	H_s(p,q) =%
	\begin{cases}
		f_s\left(\frac{\left\| p-\overline{u}\right\|}{m_u}\right) & \text{if\: $\left\| p-\overline{u}\right\|\leq \frac{m_u}{2}$}, \\
		f_s\left(\frac{p_0-\lvert u\rvert}{R-\lvert u\rvert}\right) & \text{if\: $p_0\geq \frac{R+\lvert u\rvert}{2}$}, \\
		f_s\left(\frac{p_0+\lvert u\rvert}{R-\lvert u\rvert}\right) & \text{if\: $p_0\leq -\frac{R+\lvert u\rvert}{2}$}, \\
		s & \text{otherwise}.
	\end{cases}
\]
We note that every contractible trajectory in $A_R\times\mathbb{T}^{2n}$ is constant.
By (vi), for $s\geq 1$ the set $\mathcal{P}(H_s;0)$ of contractible periodic trajectories is denoted by
\[
	\mathcal{P}(H_s;0)=\left\{\,t\mapsto \bigl(p(t),q(t)\bigr)\relmiddle|%
	\begin{aligned}%
	\dot{q}_0\equiv\dot{q}_1\equiv\cdots\equiv \dot{q}_n\equiv 0,\\ %
	p_0\equiv 0,\ \dot{p}_1\equiv\cdots\equiv\dot{p}_n\equiv 0
	\end{aligned}\,\right\},
\]
and for $s\leq -1$
\[
	\mathcal{P}(H_s;0)=\left\{\,t\mapsto \bigl(p(t),q(t)\bigr)\relmiddle|%
	\begin{aligned}%
	\dot{q}_0\equiv\dot{q}_1\equiv\cdots\equiv \dot{q}_n\equiv 0,\\ %
	p_0\equiv u,\ p_1\equiv\cdots\equiv p_n\equiv 0
	\end{aligned}\,\right\}.
\]

\begin{lemma}[{\cite[Lemma 5.3.1]{BPS}}]\label{lemma:MB0}
For every $s\in\mathbb{R}$ the set $\mathcal{P}(H_s;0)$ is
a Morse--Bott manifold of periodic trajectories for $H_s$ if and only if $f''_s(0)\neq 0$.
\end{lemma}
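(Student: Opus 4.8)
The plan is to verify directly, for each $s$, the two defining conditions of a Morse--Bott manifold of periodic trajectories, by computing the linearized time-one flow of $H_s$. The crucial feature is that $H_s$ depends only on the momentum variables $p=(p_0,p_1,\dots,p_n)$, so its Hamiltonian flow is a fibrewise shear in the $q$-directions and everything reduces to the nondegeneracy of the Hessian of a single radial function.

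First I would record the flow. With $\omega=dp_0\wedge dq_0+\sum_{i=1}^{n}dp_i\wedge dq_i$ and $H_s=H_s(p)$ one has $X_{H_s}=\sum_i(\partial_{p_i}H_s)\,\partial_{q_i}$, hence $\varphi_{H_s}^{t}(p,q)=\bigl(p,\,q+t\,\nabla_{p}H_s(p)\bigr)$, and in the splitting $T_{(p,q)}M=\mathbb{R}^{n+1}_{\delta p}\oplus\mathbb{R}^{n+1}_{\delta q}$ given by the coordinates $(p_0,\dots,p_n)$ and $(q_0,\dots,q_n)$,
\[
	d\varphi_{H_s}^{1}(p,q)=\begin{pmatrix} I & 0\\ \mathrm{Hess}_{p}H_s(p) & I\end{pmatrix},\qquad
	\Ker\bigl(d\varphi_{H_s}^{1}(p,q)-\mathrm{id}\bigr)=\Ker\bigl(\mathrm{Hess}_{p}H_s(p)\bigr)\oplus\mathbb{R}^{n+1}_{\delta q}.
\]
In particular every $x\in\mathcal{P}(H_s;0)$ is constant and lies over a critical point $p$ of $H_s$.

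Next I would analyze the set $\mathcal{P}(H_s;0)$ displayed above. For $s\ge1$ it is $P=\{p_0\equiv0\}$, whose evaluation $C_0=\{0\}\times(\mathbb{R}/\mathbb{Z})\times\mathbb{T}^{2n}$ is a compact submanifold of $M$ with $T_{x_0}C_0=\{\delta p_0=0\}\oplus\mathbb{R}^{n+1}_{\delta q}$; since $H_s=f_s(p_0/R)$ does not involve $p_1,\dots,p_n$, the matrix $\mathrm{Hess}_{p}H_s(0)$ has a single nonzero entry, $R^{-2}f_s''(0)$, so $\Ker\bigl(\mathrm{Hess}_{p}H_s(0)\bigr)$ equals $\{\delta p_0=0\}$ exactly when $f_s''(0)\ne0$ and equals $\mathbb{R}^{n+1}_{\delta p}$ otherwise. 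Hence the tangent-space condition $T_{x_0}C_0=\Ker\bigl(d\varphi_{H_s}^{1}(x_0)-\mathrm{id}\bigr)$ holds iff $f_s''(0)\ne0$. For $s\le-1$ the set is $P=\{p\equiv\overline{u}\}$, with $C_0=\{\overline{u}\}\times\mathbb{T}^{n+1}$ and $T_{x_0}C_0=\mathbb{R}^{n+1}_{\delta q}$; near $\overline{u}$ we have $H_s=f_s(\lVert p-\overline{u}\rVert/m_u)$, which is a smooth function of $p$ because $f_s$ is even (property (i)), and a short computation gives $\mathrm{Hess}_{p}H_s(\overline{u})=m_u^{-2}f_s''(0)\,I_{n+1}$; this matrix is nondegenerate — equivalently $\Ker\bigl(d\varphi_{H_s}^{1}(x_0)-\mathrm{id}\bigr)=T_{x_0}C_0$ — iff $f_s''(0)\ne0$. (Thanks to property (vi), every contractible trajectory over a point where $H_s>0$ belongs to this $P$; the remaining contractible trajectories lie over points with $H_s\le0$, so they have non-positive action and do not enter $\mathrm{CF}^{[a,\infty)}(H_s;0)$ for $a>0$.) For the values of $s$ for which $H_s$ has not been written down explicitly, $H_s$ interpolates monotonically through Hamiltonians of the same shape and the same computation applies; cf.\ \cite[Lemma 5.3.1]{BPS}.

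I expect the only real work to be the two Hessian computations, together with the verification that $f_s(\lVert p-\overline{u}\rVert/m_u)$ is smooth at $p=\overline{u}$; both are routine given that $f_s$ is even and the explicit formulas for $H_s$. The point that needs care is bookkeeping: one must compare $T_{x_0}C_0$ with $\Ker\bigl(d\varphi_{H_s}^{1}(x_0)-\mathrm{id}\bigr)$ as honest subspaces of $T_{x_0}M$, not merely their dimensions; in both cases, however, this comparison collapses to the single scalar condition $f_s''(0)\ne0$.
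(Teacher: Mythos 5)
Your proof is correct, and since the paper offers no argument of its own for this lemma (it simply cites \cite[Lemma 5.3.1]{BPS}), your direct verification --- computing $d\varphi^1_{H_s}$ as a shear by $\mathrm{Hess}_p H_s$ over the identity and reducing the tangent-space condition in both regimes $s\geq 1$ and $s\leq -1$ to the single scalar condition $f_s''(0)\neq 0$ --- is exactly the intended argument. Your parenthetical about the constant trajectories sitting in the locally constant regions of $H_s$, which the displayed description of $\mathcal{P}(H_s;0)$ silently discards but which carry non-positive action, is the right way to reconcile the lemma with the literal set of contractible one-periodic trajectories.
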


By (ii) and Lemma \ref{lemma:MB0},
the set $\mathcal{P}(H_s;0)$ is a Morse--Bott manifold of periodic trajectories for $H_s$.
Moreover, $\mathcal{P}(H_s;0)\cong\mathbb{T}^{2n+1}$ for $s\geq 1$
and $\mathcal{P}(H_s;0)\cong\mathbb{T}^{n+1}$ $s\leq -1$.
For every $s\in\mathbb{R}$, the action of $x\in\mathcal{P}(H_s;0)$ is
\[
	\mathcal{A}_H(x)=f_s(0)>c.
\]
By Theorem \ref{theorem:Pozniak}, for every $s\in\mathbb{R}$ we have
\[
	\mathrm{HF}_{\ast}^{[a,\infty)}(H_s;0)\cong%
	\begin{cases}%
		H_{\ast}\bigl(\mathcal{P}(H_s;0);\mathbb{Z}/2\mathbb{Z}\bigr) & \text{if\ $0<a<f_s(0)$},\\
		0 & \text{if\ $0<f_s(0)<a$}.
	\end{cases}
\]
By \cite[Proposition 4.5.1]{BPS}, the monotone homomorphism
\[
	\sigma_{H_{s_1}H_{s_0}}\colon \mathrm{HF}^{[a,\infty)}(H_{s_0};0)\to\mathrm{HF}^{[a,\infty)}(H_{s_1};0)
\]
is an isomorphism whenever $a\not\in[f_{s_1}(0),f_{s_0}(0)]$
and either $1\leq s_1\leq s_0$ or $s_1\leq s_0\leq -1$.
By \cite[Lemma 4.7.1 (ii)]{BPS}, the homomorphism
\[
	\pi_{H_s}\colon {\SH}^{[a,\infty)}(A_R\times \mathbb{T}^{2n};0)\to \mathrm{HF}^{[a,\infty)}(H_s;0)
\]
is an isomorphism for any $s\geq 1$ such that $f_s(0)>a$.
Therefore,
\[
	{\SH}^{[a,\infty)}(A_R\times \mathbb{T}^{2n};0)\cong%
	H_{\ast}(\mathbb{T}^{2n+1};\mathbb{Z}/2\mathbb{Z}).
\]
By \cite[Lemma 4.7.1 (i)]{BPS}, the homomorphism
\[
	\iota_{H_s}\colon \mathrm{HF}^{[a,\infty)}(H_s;0)\to {\RSH}^{[a,\infty);c}(A_R\times \mathbb{T}^{2n},L_u\times\mathbb{T}^n;0)
\]
is an isomorphism for any $s\leq -1$ if $a\leq c$,
and for any $s\leq -1$ with $f_s(0)<a$ if $a>c$.
Hence we obtain
\[
	{\RSH}^{[a,\infty);c}(A_R\times \mathbb{T}^{2n},L_u\times \mathbb{T}^n;0)=%
	\begin{cases}
		H_{\ast}(\mathbb{T}^{n+1};\mathbb{Z}/2\mathbb{Z}) & \text{if\: $0<a\leq c$}, \\
		0 & \text{if\: $a>c$}.
	\end{cases}
\]
Thus the proof of Theorem \ref{theorem:main0} is complete.
\end{proof}


\subsection{Proof of Theorem \ref{theorem:main}}

\begin{proof}
We assume, without loss of generality, that $\ell>0$.
Fix a positive real number $c>\max\{u\ell,0\}$ and choose a smooth family of real functions $\{f_s(r)\}_{s\in\mathbb{R}}$,
defined for $r\in\mathbb{R}$, with the following properties (cf.\ \cite[Subsection 5.5]{BPS}).
\begin{enumerate}
	\item $f_s(-r)=f_s(r)$ for all $s$ and $r$.
	\item For any $s\in\mathbb{R}$
	\[
		f_s(0)>c,\quad \lim_{s\to -\infty}f_s(0)=c,\quad \lim_{s\to\infty}f_s(0)=\infty.
	\]
	\item For all $s$ and $r$ we have $\partial_s f_s(r)\geq 0$.
	\item If $s\geq 1$, then
	\[
		f_s(r) =%
		\begin{cases}
			f_s(0) & \text{if\: $0\leq r\leq 1-\frac{3}{8s}$}, \\
			0 & \text{if\: $r\geq 1-\frac{1}{8s}$},
		\end{cases}
	\]
	$f'_s(r)\leq 0$ for all $r\geq 0$, and
	\[
		\begin{cases}
			f''_s(r)<0 & \text{if\: $1-\frac{3}{8s}<r<1-\frac{2}{8s}$}, \\
			f''_s(r)>0 & \text{if\: $1-\frac{2}{8s}<r<1-\frac{1}{8s}$}.
		\end{cases}
	\]
	\item If $s\leq -1$, then
	\[
		f_s(r) =%
		\begin{cases}
			f_s(0) & \text{if\: $0\leq r\leq\frac{1}{8\lvert s\rvert}$}, \\
			s & \text{if\: $\frac{3}{8\lvert s\rvert}\leq r\leq 1-\frac{3}{8\lvert s\rvert}$}, \\
			0 & \text{if\: $r\geq 1-\frac{1}{8\lvert s\rvert}$},
		\end{cases}
	\]
	$f'_s(r)\leq 0$ for $r\leq 1/2$, $f'_s(r)\geq 0$ for $r\geq 1/2$, and
	\[
		\begin{cases}
			f''_s(r)<0 & \text{if\: $\frac{1}{8\lvert s\rvert}<r<\frac{2}{8\lvert s\rvert}$}, \\
			f''_s(r)>0 & \text{if\: $\frac{2}{8\lvert s\rvert}<r<\frac{3}{8\lvert s\rvert}$}.
		\end{cases}
	\]
	\item For any $s\geq 1$ such that $f_s(0)>R\ell$ there exist real numbers $r'_s<r_s<0$ such that
	\[
		f'_s(r_s)=f'_s(r'_s)=R\ell,\quad f''_s(r_s)<0,\quad f''_s(r'_s)>0,
	\]
	and $f'_s(r)\neq R\ell$ for any $r\in \mathbb{R}\setminus \{r_s,r'_s\}$.
	\item For any $s\leq -1$
	there exist real numbers $r'_s<r_s<0$ such that
	\[
		f'_s(r_s)=f'_s(r'_s)=m_u\ell,\quad f''_s(r_s)<0,\quad f''_s(r'_s)>0,
	\]
	and $f'_s(r)\neq m_u\ell$ for any $r\in \mathbb{R}\setminus \{r_s,r'_s\}$.
	\item For any $s\leq -1$ the only possible points $r>0$ with $f'_s(r)=(R-\lvert u\rvert)\ell$ must satisfy $f_s(r)<0$
	and the number of such $r$ is finite.
\end{enumerate}
\begin{center}
	\begin{overpic}[width=8truecm,clip]{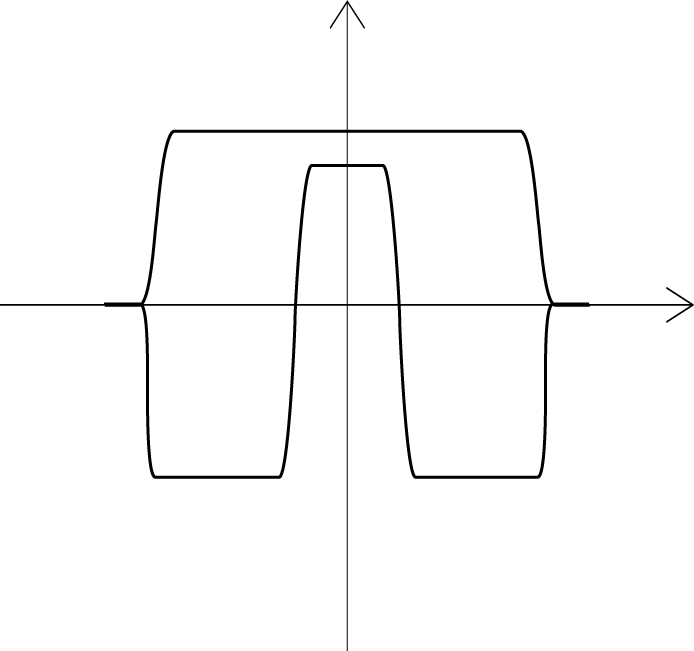}
	\put(115,155){\line(-1,0){2}}
	\put(116,104){$0$}
	\put(116,150){$c$}
	\put(191,104){$1$}
	\put(25,104){$-1$}
	\put(230,111){$r$}
	\put(180,170){$s\geq 1$}
	\put(16,46){$s\leq -1$}
	\end{overpic}
\captionof{figure}{Outline of the graphs of $f_s$ (for $s\geq 1$ and $s\leq -1$).}\label{figure:1}
\end{center}

Now choose a family of Hamiltonians $H_s\colon A_R\times \mathbb{T}^{2n}\to\mathbb{R}$ with compact support so that
for $s\geq 1$
\[
	H_s(p,q)=f_s\left(\frac{p_0}{R}\right),
\]
and for $s\leq -1$
\[
	H_s(p,q) =%
	\begin{cases}
		f_s\left(\frac{\left\| p-\overline{u}\right\|}{m_u}\right) & \text{if\: $\left\| p-\overline{u}\right\|\leq \frac{m_u}{2}$}, \\
		f_s\left(\frac{p_0-\lvert u\rvert}{R-\lvert u\rvert}\right) & \text{if\: $p_0\geq \frac{R+\lvert u\rvert}{2}$}, \\
		f_s\left(\frac{p_0+\lvert u\rvert}{R-\lvert u\rvert}\right) & \text{if\: $p_0\leq -\frac{R+\lvert u\rvert}{2}$}, \\
		s & \text{otherwise}.
	\end{cases}
\]
Here we note that the condition $m_u\leq 1$ ensures that the Hamiltonians $H_s$, $s\leq -1$ are well-defined.
\begin{center}
	\begin{overpic}[width=6truecm,clip]{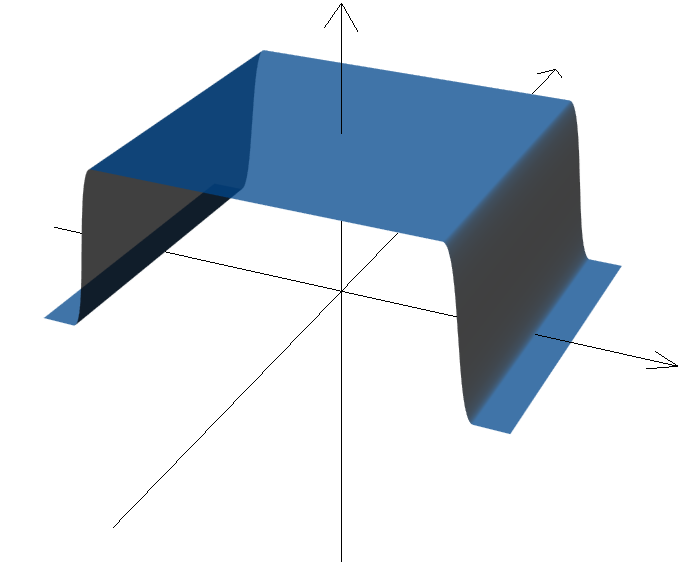}
	\put(88,63){$0$}
	\put(162,45){$p_0$}
	\put(140,133){$p_1$}
	\end{overpic}
	\begin{overpic}[width=6truecm,clip]{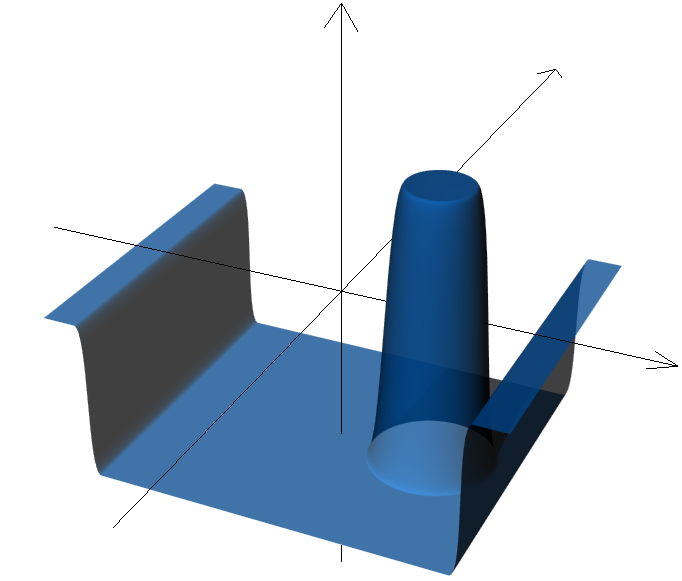}
	\put(78,76){$0$}
	\put(164,45){$p_0$}
	\put(140,133){$p_1$}
	\end{overpic}
\captionof{figure}{Outline of the graphs of $H_s$ (for $s\geq 1$ and $s\leq -1$) in the case $n=1$.}\label{figure:2}
\end{center}
Then for $s\geq 1$ the corresponding Hamiltonian vector field $X_{H_s}$ is of the form
\[
	X_{H_s}(p,q)=%
	\displaystyle\frac{1}{R}f'_s\left(\frac{p_0}{R}\right)\frac{\partial}{\partial q_0},
\]
and for $s\leq -1$
\[
	X_{H_s}(p,q) =%
	\begin{cases}
		\sum\limits_{i=0}^n\frac{1}{m_u}f'_s\left(\frac{\left\| p-\overline{u}\right\|}{m_u}\right)\frac{p_i-\overline{u}_i}{\left\| p-\overline{u}\right\|}\frac{\partial}{\partial q_i} & \text{if\: $\left\| p-\overline{u}\right\|\leq \frac{m_u}{2}$}, \\
		\frac{1}{R-\lvert u\rvert}f'_s\left(\frac{p_0-\lvert u\rvert}{R-\lvert u\rvert}\right)\frac{\partial}{\partial q_0} & \text{if\: $p_0\geq \frac{R+\lvert u\rvert}{2}$}, \\
		\frac{1}{R-\lvert u\rvert}f'_s\left(\frac{p_0+\lvert u\rvert}{R-\lvert u\rvert}\right)\frac{\partial}{\partial q_0} & \text{if\: $p_0\leq -\frac{R+\lvert u\rvert}{2}$}, \\
		0 & \text{otherwise}.
	\end{cases}
\]
For $\ell\in\mathbb{Z}_{>0}$ let us now consider the nontrivial free homotopy class
\[
	\hat{\alpha}_{\ell}=(\alpha_{\ell},0_{\mathbb{T}^{2n}})\in [S^1,A_R\times \mathbb{T}^{2n}].
\]
Then for $s\geq 1$ the set $\mathcal{P}(H_s;\hat{\alpha}_{\ell})$ of periodic trajectories in $\hat{\alpha}_{\ell}$ is denoted by
\[
	\mathcal{P}(H_s;\hat{\alpha}_{\ell})=\left\{\,t\mapsto \bigl(p(t),q(t)\bigr)\relmiddle|p(t), q(t)\ \text{satisfy}\ \eqref{eq:hamsys+}\,\right\},
\]
where
\begin{equation}\label{eq:hamsys+}
	\begin{cases}
		q_0(t)=c_0+\ell t\text{, where}\ c_0\in\mathbb{R}/\mathbb{Z}, \\
		q_1\equiv c_1,\ q_2\equiv c_2,\ \cdots,q_n\equiv c_n\in \mathbb{R}/\mathbb{Z}, \\
		p_0(t)=Rr\text{, where}\ r<0\ \text{is such that}\ f'_s(r)=R\ell, \\
		p_1\equiv d_1,\ p_2\equiv d_2,\ \cdots,p_n\equiv d_n\in \mathbb{R}/2\mathbb{Z}
	\end{cases}
\end{equation}
and for $s\leq -1$
\[
	\mathcal{P}(H_s;\hat{\alpha}_{\ell})=\left\{\,t\mapsto \bigl(p(t),q(t)\bigr)\relmiddle|%
	\begin{aligned}%
		p(t), q(t)\ \text{satisfy}\
		&\eqref{eq:hamsys-a}\ \text{if\: $\left\| p(t)-\overline{u}\right\|\leq \frac{m_u}{2}$}\ (\forall t),\\%
		\text{and}\ &\eqref{eq:hamsys-b}\ \text{if\: $p_0(t)\geq \frac{R+\lvert u\rvert}{2}$}\ (\forall t)%
	\end{aligned}\,\right\},
\]
where
\begin{equation}\label{eq:hamsys-a}
	\begin{cases}
		q_0(t)=c_0+\ell t\text{, where}\ c_0\in\mathbb{R}/\mathbb{Z}, \\
		q_1\equiv c_1,\ q_2\equiv c_2,\ \cdots,q_n\equiv c_n\in \mathbb{R}/\mathbb{Z}, \\
		p_0(t)=u+m_ur\text{, where}\ r<0\ \text{is such that}\ f'_s(r)=m_u\ell, \\
		p_1=p_2=\cdots =p_n=0,
	\end{cases}
\end{equation}
and
\begin{equation}\label{eq:hamsys-b}
	\begin{cases}
		q_0(t)=c_0+\ell t\text{, where}\ c_0\in\mathbb{R}/\mathbb{Z}, \\
		q_1\equiv c_1,\ q_2\equiv c_2,\ \cdots,q_n\equiv c_n\in \mathbb{R}/\mathbb{Z}, \\
		p_0(t)=\lvert u\rvert+(R-\lvert u\rvert)r\text{, where}\ r>0\ \text{is such that}\ f'_s(r)=(R-\lvert u\rvert)\ell, \\
		p_1\equiv d_1,\ p_2\equiv d_2,\ \cdots,p_n\equiv d_n\in \mathbb{R}/2\mathbb{Z}.
	\end{cases}
\end{equation}
Note that there are no periodic trajectories representing $\hat{\alpha}_{\ell}$ if $p_0\leq -(R+\lvert u\rvert)/2$ and $s\leq -1$.
Given $r<0$ with $f_s'(r)=R\ell$, we denote
\[
	\mathcal{P}(r,\hat{\alpha}_{\ell})=\left\{\,x\colon S^1\to A_R\times \mathbb{T}^{2n} \relmiddle| p(t), q(t)\ \text{satisfy}\ \eqref{eq:hamsys+}\,\right\}.
\]
Given $r<0$ with $f'_s(r)=m_u\ell$, we denote
\[
	\mathcal{Q}(r,\hat{\alpha}_{\ell})=\left\{\,x\colon S^1\to A_R\times \mathbb{T}^{2n}\relmiddle| p(t), q(t)\ \text{satisfy}\ \eqref{eq:hamsys-a}\,\right\}.
\]
Given $r>0$ with $f'_s(r)=(R-\lvert u\rvert)\ell$, we denote
\[
	\mathcal{R}(r,\hat{\alpha}_{\ell})=\left\{\,x\colon S^1\to A_R\times \mathbb{T}^{2n}\relmiddle| p(t), q(t)\ \text{satisfy}\ \eqref{eq:hamsys-b}\,\right\}.
\]
Then $\mathcal{P}(r,\hat{\alpha}_{\ell})$ and $\mathcal{R}(r,\hat{\alpha}_{\ell})$ are diffeomorphic to $\mathbb{T}^{2n+1}$,
and $\mathcal{Q}(r,\hat{\alpha}_{\ell})$ is diffeomorphic to $\mathbb{T}^{n+1}$.
In summary, we have
\[
	\mathcal{P}(H_s;\hat{\alpha}_{\ell})=%
	\begin{cases}
		\mathcal{P}(r_s,\hat{\alpha}_{\ell})\sqcup\mathcal{P}(r'_s,\hat{\alpha}_{\ell}) & \text{if\: $s\geq 1$}, \\
		\mathcal{Q}(r_s,\hat{\alpha}_{\ell})\sqcup\mathcal{Q}(r'_s,\hat{\alpha}_{\ell})\sqcup\bigsqcup_r\mathcal{R}(r,\hat{\alpha}_{\ell}) & \text{if\: $s\leq -1$},
	\end{cases}
\]
where the union $\bigsqcup_r\mathcal{R}(r,\hat{\alpha}_{\ell})$ runs over $r>0$ such that $f_s'(r)=(R-\lvert u\rvert)\ell$.

\begin{lemma}[{\cite[Lemma 5.3.2]{BPS}}]\label{lemma:MB}
The sets $\mathcal{P}(r,\hat{\alpha}_{\ell})$, $\mathcal{Q}(r,\hat{\alpha}_{\ell})$ and $\mathcal{R}(r,\hat{\alpha}_{\ell})$ are Morse--Bott manifolds of periodic trajectories for $H_s$ if and only if $f''_s(r)\neq 0$.
\end{lemma}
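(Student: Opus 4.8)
The plan is to verify, one region at a time, the two defining conditions for $P\in\{\mathcal{P}(r,\hat{\alpha}_{\ell}),\ \mathcal{Q}(r,\hat{\alpha}_{\ell}),\ \mathcal{R}(r,\hat{\alpha}_{\ell})\}$ to be a Morse--Bott manifold of periodic trajectories for $H_s$: that $C_0=\{\,x(0)\mid x\in P\,\}$ is a compact submanifold of $A_R\times\mathbb{T}^{2n}$, and that $T_{x_0}C_0=\Ker\bigl(d\varphi_{H_s}^1(x(0))-\mathrm{id}\bigr)$ for every $x_0\in C_0$. The first condition is immediate from the explicit descriptions \eqref{eq:hamsys+}, \eqref{eq:hamsys-a} and \eqref{eq:hamsys-b}: in each case $C_0$ is an embedded coordinate subtorus, obtained by freezing $p_0$ (and, for $\mathcal{Q}$, also $p_1,\ldots,p_n$) while the remaining coordinates run over their tori, as already observed above. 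So the entire content of the lemma lies in the second condition, and the hypothesis $f''_s(r)\neq0$ is exactly what governs it.

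First I would treat $\mathcal{P}(r,\hat{\alpha}_{\ell})$ (with $s\geq1$) and $\mathcal{R}(r,\hat{\alpha}_{\ell})$ (with $s\leq-1$), where $H_s$ depends on $p_0$ alone. The time-one map is then completely explicit: it fixes $p$ and $q_1,\ldots,q_n$ and advances $q_0$ by $\tfrac{1}{R}f'_s(p_0/R)$, respectively by $\tfrac{1}{R-|u|}f'_s\!\bigl(\tfrac{p_0-|u|}{R-|u|}\bigr)$. Differentiating at a point of $C_0$, the operator $d\varphi_{H_s}^1-\mathrm{id}$ annihilates every coordinate vector except $\partial_{p_0}$, which it carries to $\tfrac{1}{R^2}f''_s(r)\,\partial_{q_0}$, respectively to $\tfrac{1}{(R-|u|)^2}f''_s(r)\,\partial_{q_0}$. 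Hence $\Ker\bigl(d\varphi_{H_s}^1-\mathrm{id}\bigr)$ equals the span of $\partial_{q_0},\partial_{p_1},\partial_{q_1},\ldots,\partial_{p_n},\partial_{q_n}$ when $f''_s(r)\neq0$ and equals all of $T_{x_0}(A_R\times\mathbb{T}^{2n})$ when $f''_s(r)=0$; since $T_{x_0}C_0$ is precisely that span, the equivalence follows in these two cases.

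The case $\mathcal{Q}(r,\hat{\alpha}_{\ell})$ (with $s\leq-1$) is the one requiring an honest computation, and I expect it to be the main point. Along such an orbit one has $p_1=\cdots=p_n=0$ and $\|p-\overline{u}\|=|p_0-u|=-m_u r>0$, so $H_s=f_s(\|p-\overline{u}\|/m_u)$ is smooth near it; writing $\rho=\|p-\overline{u}\|$, the time-one map fixes $p$ and sends $q_i\mapsto q_i+\tfrac{1}{m_u\rho}f'_s(\rho/m_u)\,(p_i-\overline{u}_i)$ for $i=0,1,\ldots,n$. Linearizing at $x_0\in C_0$ and using that $p_i=0$ and $\partial\rho/\partial p_i=0$ there for $i\geq1$, one finds that $d\varphi_{H_s}^1-\mathrm{id}$ is a shear which kills all the $\partial_{q_i}$, sends $\partial_{p_i}\mapsto\tfrac{1}{m_u\rho}f'_s(\rho/m_u)\,\partial_{q_i}$ for $i=1,\ldots,n$, and sends $\partial_{p_0}\mapsto\tfrac{1}{m_u^2}f''_s(r)\,\partial_{q_0}$ (the evenness of $f_s$ is used to rewrite $f''_s(\rho/m_u)=f''_s(r)$, and the oddness of $f'_s$ together with $\ell\neq0$ to see that $f'_s(\rho/m_u)=-m_u\ell\neq0$). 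In particular the coefficients in front of $\partial_{q_1},\ldots,\partial_{q_n}$ never vanish, so $\Ker\bigl(d\varphi_{H_s}^1-\mathrm{id}\bigr)=\operatorname{span}\{\partial_{q_0},\ldots,\partial_{q_n}\}=T_{x_0}C_0$ exactly when $f''_s(r)\neq0$, whereas if $f''_s(r)=0$ the vector $\partial_{p_0}$ is an extra element of the kernel. Combining the three cases proves the lemma. The one delicate point is the non-vanishing of those angular coefficients, which is where the non-contractibility $\ell\neq0$ genuinely enters and which is what keeps the directions $\partial_{p_1},\ldots,\partial_{p_n}$ out of the kernel.
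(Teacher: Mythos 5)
Your verification is correct; the paper gives no proof of this lemma, simply citing \cite[Lemma 5.3.2]{BPS}, and your direct linearization of the time-one map in each of the three regions (including the radial computation for $\mathcal{Q}(r,\hat{\alpha}_{\ell})$, where the evenness of $f_s$ and the hypothesis $\ell\neq 0$ enter exactly as you indicate to keep the $\partial_{p_1},\ldots,\partial_{p_n}$ directions out of the kernel) is precisely the standard argument behind that citation.
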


We claim that the value of the action functional on $\mathcal{R}(r,\hat{\alpha}_{\ell})$ is negative.
In fact, since we have $f'_s(r)=(R-\lvert u\rvert)\ell$ for any periodic trajectory in $\mathcal{R}(r,\hat{\alpha}_{\ell})$,
the action of such a periodic trajectory is $f_s(r)-\lvert u\rvert\ell-(R-\lvert u\rvert)r\ell$,
and this is negative by (viii).
On the other hand, by (vi), (vii) and Lemma \ref{lemma:MB}, $\mathcal{P}(r,\hat{\alpha}_{\ell})$ and $\mathcal{Q}(r,\hat{\alpha}_{\ell})$ are Morse--Bott manifolds of periodic trajectories for $H_s$
and the values of the action functional on these critical manifolds are
\[
	\mathcal{A}_{H_s}\bigl(\mathcal{P}(r,\hat{\alpha}_{\ell})\bigr)=f_s(r)-Rr\ell,
\]
and
\[
	\mathcal{A}_{H_s}\bigl(\mathcal{Q}(r,\hat{\alpha}_{\ell})\bigr)=f_s(r)-(u+m_ur)\ell,
\]
respectively.
Fix a real number $a$.
We prove Theorem \ref{theorem:main} in four steps.


\begin{step}\label{step:1}
If $a<R\ell$, then ${\SH}^{[a,\infty)}(A_R\times \mathbb{T}^{2n};\hat{\alpha}_{\ell})=0$.
\end{step}

\begin{center}
	\begin{overpic}[width=8truecm,clip]{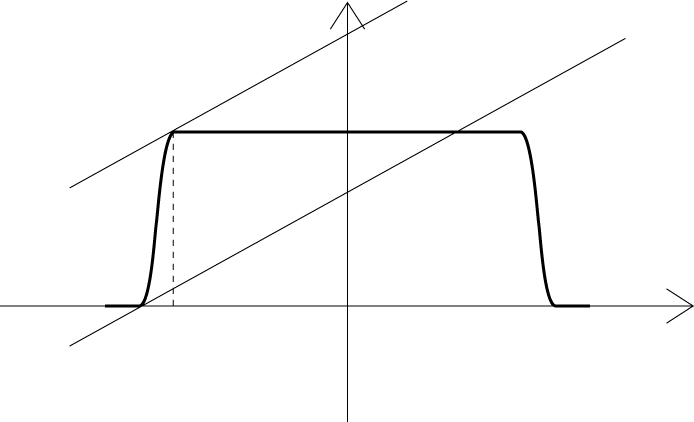}
	\put(116,29){$0$}
	\put(191,29){$R$}
	\put(24,29){$-R$}
	\put(230,36){$p_0$}
	\put(50,29){$Rr_s$}
	\put(43,29){$\uparrow$}
	\put(38,19){$Rr_s'$}
	\put(8,120){tangent at $p_0=Rr_s$}
	\put(190,110){tangent at $p_0=Rr_s'$}
	\end{overpic}
\captionof{figure}{Outline of the graph of $H_s$ (for $s\geq 1$) in the direction of $p_0$.}\label{figure:3}
\end{center}

For $s\geq 1$ the only families of periodic trajectories are $\mathcal{P}(r_s,\hat{\alpha}_{\ell})$ and $\mathcal{P}(r'_s,\hat{\alpha}_{\ell})$.
Since both $r_s$ and $r'_s$ converge to $-1$ as $s\to\infty$,
the values of the action functional on $\mathcal{P}(r_s,\hat{\alpha}_{\ell})$ and $\mathcal{P}(r'_s,\hat{\alpha}_{\ell})$ are both bigger than $a$ for $s$ sufficiently large.
Hence
\[
	\mathrm{HF}^{[a,\infty)}(H_s;\hat{\alpha}_{\ell})\cong \mathrm{HF}^{[-\infty,\infty)}(H_s;\hat{\alpha}_{\ell})=0
\]
for $s$ sufficiently large.
Here since $\mathrm{HF}^{[-\infty,\infty)}(H_s;\hat{\alpha}_{\ell})$ is independent of $H_s$ (see \cite[Proposition 4.5.1]{BPS})
and every $C^2$-small Hamiltonian has only contractible periodic trajectories,
the last equation holds.
Now Step 1 follows from \cite[Lemma 4.7.1 (ii)]{BPS}.
 

\begin{step}\label{step:2}
If $a\geq R\ell$, then ${\SH}^{[a,\infty)}(A_R\times \mathbb{T}^{2n};\hat{\alpha}_{\ell})\cong H_{\ast}(\mathbb{T}^{2n+1};\mathbb{Z}/2\mathbb{Z})$.
Moreover, the homomorphism
\[
	\pi_{H_s}\colon {\SH}^{[a,\infty)}(A_R\times \mathbb{T}^{2n};\hat{\alpha}_{\ell})\to \mathrm{HF}^{[a,\infty)}(H_s;\hat{\alpha}_{\ell})
\]
is an isomorphism whenever $f_s(0)>a$.
\end{step}

By (vi), for any $s\geq 1$
the number $r_s$ (resp.\ $r'_s$) is the maximum point (resp.\ the minimum point)
of the function $f_{s,\ell}\colon [-1,0]\to\mathbb{R}$ defined by
\[
	f_{s,\ell}(r)=f_s(r)-Rr\ell.
\]
If $s$ is sufficiently large so that $f_s(0)>a$, then $f_{s,\ell}(0)=f_s(0)>a$ and hence
\[
	\mathcal{A}_{H_s}\bigl(\mathcal{P}(r_s,\hat{\alpha}_{\ell})\bigr)=f_{s,\ell}(r_s)>f_{s,\ell}(0)>a.
\]
Since $f_{s,\ell}(-1)=R\ell\leq a$, we have
\[
	\mathcal{A}_{H_s}\bigl(\mathcal{P}(r'_s,\hat{\alpha}_{\ell})\bigr)=f_{s,\ell}(r'_s)<f_{s,\ell}(-1)\leq a.
\]
Hence, by Theorem \ref{theorem:Pozniak},
$\mathrm{HF}^{[a,\infty)}(H_s;\hat{\alpha}_{\ell})\cong H_{\ast}(\mathbb{T}^{n+1};\mathbb{Z}/2\mathbb{Z})$
and, by \cite[Proposition 4.5.1]{BPS}, the monotone homomorphism
\[
	\sigma_{H_{s_1}H_{s_0}}\colon \mathrm{HF}^{[a,\infty)}(H_{s_0};\hat{\alpha}_{\ell})\to\mathrm{HF}^{[a,\infty)}(H_{s_1};\hat{\alpha}_{\ell})
\]
is an isomorphism whenever $f_{s_i}(0)>a$ for $i=0,1$ and $s_1\leq s_0$ and Step 2 follows from \cite[Lemma 4.7.1 (ii)]{BPS}.


\begin{step}\label{step:3}
If $a>c-u\ell>0$, then ${\RSH}^{[a,\infty);c}(A_R\times \mathbb{T}^{2n},L_u\times \mathbb{T}^n;\hat{\alpha}_{\ell})=0$.
\end{step}

\begin{center}
	\begin{overpic}[width=8truecm,clip]{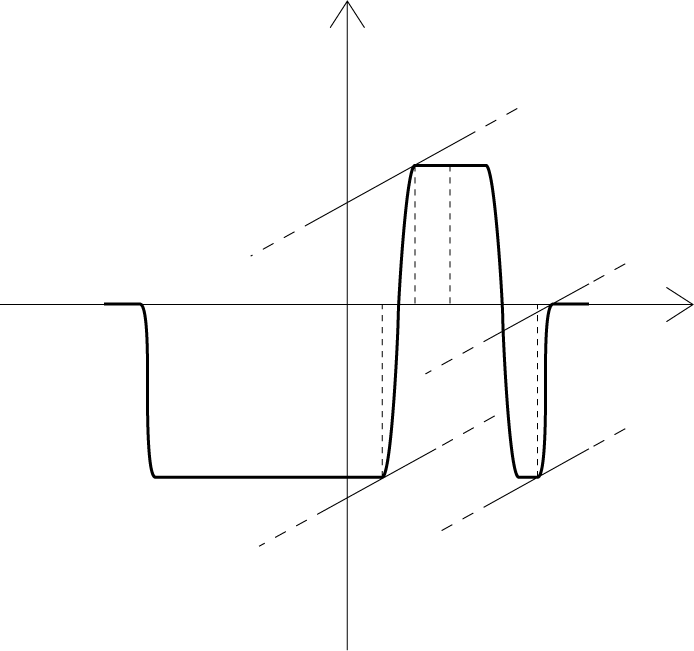}
	\put(113,155){\line(1,0){2}}
	\put(116,104){$0$}
	\put(116,150){$c$}
	\put(191,104){$R$}
	\put(25,104){$-R$}
	\put(230,111){$p_0$}
	\put(145,104){$u$}
	\put(140,180){tangent at $p_0=u+m_ur_s$}
	\put(-5,26){tangent at $p_0=u+m_ur_s'$}
	\put(208,125){$L_1$}
	\put(208,70){$L_2$}
	\end{overpic}
\captionof{figure}{Outline of the graph of $H_s$ (for $s\leq -1$) in the direction of $p_0$.}\label{figure:4}
\end{center}

In Figure \ref{figure:4}, the lines $L_1,L_2$ are tangent to $H_s$ at $p_0=\lvert u\rvert+(R-\lvert u\rvert)r$,
where $r>0$ is such that $f_s'(r)=(R-\lvert u\rvert)\ell$ (if such $r>0$ exists).
Since $a>0$, we can ignore all trajectories which belong to $\mathcal{R}(r,\hat{\alpha}_{\ell})$.
Since both $r_s$ and $r'_s$ converge to 0 as $s\to -\infty$,
the values of the action functional on $\mathcal{Q}(r_s,\hat{\alpha}_{\ell})$ and $\mathcal{Q}(r'_s,\hat{\alpha}_{\ell})$ are both less than $a$ for $-s$ sufficiently large.
Hence $\mathrm{HF}^{[a,\infty)}(H_s;\hat{\alpha}_{\ell})=0$ for $-s$ sufficiently large.
Now Step 3 follows from \cite[Lemma 4.7.1 (i)]{BPS}.


\begin{step}\label{step:4}
If $0<a\leq c-u\ell$, then ${\RSH}^{[a,\infty);c}(A_R\times \mathbb{T}^{2n},L_u\times \mathbb{T}^n;\hat{\alpha}_{\ell})\cong H_{\ast}(\mathbb{T}^{n+1};\mathbb{Z}/2\mathbb{Z})$.
Moreover, the homomorphism
\[
	\iota_{H_s}\colon\mathrm{HF}^{[a,\infty)}(H_s;\hat{\alpha}_{\ell})\to{\RSH}^{[a,\infty);c}(A_R\times \mathbb{T}^{2n},L_u\times \mathbb{T}^n;\hat{\alpha}_{\ell})
\]
is an isomorphism for $s\ll -1$.
\end{step}

As $a>0$ we may ignore trajectories which belong to $\mathcal{R}(r,\hat{\alpha}_{\ell})$.
By (vii), for any $s\leq -1$
the number $r_s$ (resp.\ $r'_s$) is the maximum point (resp.\ the minimum point)
of the function $g_{s,\ell}\colon [-1,0]\to\mathbb{R}$ defined by
\[
	g_{s,\ell}(r)=f_s(r)-(u+m_ur)\ell.
\]
Then, by (ii), $g_{s,\ell}(0)=f_s(0)-u\ell>c-u\ell\geq a$ and hence
\[
	\mathcal{A}_{H_s}\bigl(\mathcal{Q}(r_s,\hat{\alpha}_{\ell})\bigr)=g_{s,\ell}(r_s)>g_{s,\ell}(0)>a.
\]
If $s<\min\{-1,a+(u-m_u/2)\ell\}$, then $g_{s,\ell}(-1/2)=s-(u-m_u/2)\ell<a$ and hence
\[
	\mathcal{A}_{H_s}\bigl(\mathcal{Q}(r'_s,\hat{\alpha}_{\ell})\bigr)=g_{s,\ell}(r'_s)<g_{s,\ell}(-1/2)<a.
\]
Applying Theorem \ref{theorem:Pozniak}, $\mathrm{HF}^{[a,\infty)}(H_s;\hat{\alpha}_{\ell})\cong H_{\ast}(\mathbb{T}^{n+1};\mathbb{Z}/2\mathbb{Z})$
for $s<\min\{-1,a+(u-m_u/2)\ell\}$.
By \cite[Proposition 4.5.1]{BPS}, the monotone homomorphism
\[
	\sigma_{H_{s_1}H_{s_0}}\colon \mathrm{HF}^{[a,\infty)}(H_{s_0};\hat{\alpha}_{\ell})\to\mathrm{HF}^{[a,\infty)}(H_{s_1};\hat{\alpha}_{\ell})
\]
is an isomorphism for $s_1<s_0<\min\{-1,a+(u-m_u/2)\ell\}$.
Thus Step 4 follows from \cite[Lemma 4.7.1 (i)]{BPS}.
The proof of Theorem \ref{theorem:main} is complete.
\end{proof}


\subsection{Proof of Theorem \ref{theorem:main2}}

\begin{proof}
If the homomorphism
\[
	\left(T_{\hat{\alpha}_{\ell}}^{[a,\infty);c}\right)_k\colon {\SH}{}_k^{[a,\infty)}(A_R\times \mathbb{T}^{2n};\hat{\alpha}_{\ell})%
	\to {\RSH}{}_k^{[a,\infty);c}(A_R\times \mathbb{T}^{2n},L_u\times \mathbb{T}^n;\hat{\alpha}_{\ell})
\]
is non-zero,
then Theorem \ref{theorem:main0}, Step \ref{step:1} and Step \ref{step:3} in the proof of Theorem \ref{theorem:main} imply that $R\lvert\ell\rvert<a\leq c-u\ell$.

Fix $R\lvert\ell\rvert<a\leq c-u\ell$.
For simplicity, we assume that $\ell\neq0$.
The proof for the case $\ell=0$ follows the same path as in the case $\ell\neq0$.
We choose a sufficiently large $T\gg 1$ so that for all $k$ the homomorphisms
\[
	\left(\pi_{H_T}\right)_k\colon {\SH}{}_k^{[a,\infty)}(A_R\times \mathbb{T}^{2n};\hat{\alpha}_{\ell})\xrightarrow{\cong} \mathrm{HF}_k^{[a,\infty)}(H_T;\hat{\alpha}_{\ell})
\]
and
\[
	\left(\iota_{H_{-T}}\right)_k\colon\mathrm{HF}_k^{[a,\infty)}(H_{-T};\hat{\alpha}_{\ell})\xrightarrow{\cong} {\RSH}{}_k^{[a,\infty);c}(A_R\times \mathbb{T}^{2n},L_u\times \mathbb{T}^n;\hat{\alpha}_{\ell})
\]
are isomorphisms.
By Proposition \ref{proposition:diagram}, we have the following commutative diagram.
\[
	\xymatrix{
	{\SH}{}_k^{[a,\infty)}(A_R\times \mathbb{T}^{2n};\hat{\alpha}_{\ell}) \ar[d]_{\left(\pi_{H_T}\right)_k}^{\cong} \ar[rr]^(0.44){\left(T_{\hat{\alpha}_{\ell}}^{[a,\infty);c}\right)_k} & & {\RSH}{}_k^{[a,\infty);c}(A_R\times \mathbb{T}^{2n},L_u\times \mathbb{T}^n;\hat{\alpha}_{\ell}) \\
	\mathrm{HF}_k^{[a,\infty)}(H_T;\hat{\alpha}_{\ell}) \ar[rr]^{\left(\sigma_{H_{-T}H_T}\right)_k} & & \mathrm{HF}_k^{[a,\infty)}(H_{-T};\hat{\alpha}_{\ell}) \ar[u]_{\left(\iota_{H_{-T}}\right)_k}^{\cong} \\
	}
\]
Hence it is enough to show that the homomorphism $\left(\sigma_{H_{-T}H_T}\right)_k$ is non-zero and surjective.

We put $C_T=\mathrm{ev}(\mathcal{P}(r_T,\hat{\alpha}_{\ell}))\cong\mathbb{T}^{2n+1}$
and $C_{-T}=\mathrm{ev}(\mathcal{Q}(r_T,\hat{\alpha}_{\ell}))\cong \mathbb{T}^{n+1}$,
where $\mathrm{ev}$ is the evaluation map given by $\mathrm{ev}(x)=x(0)$.
We define a Morse function $F_{-T}\colon C_{-T}\to\mathbb{R}$ by
\[
	F_{-T}(q_0,q_1,\ldots,q_n)=-n(n+2)+\frac{1}{2}\left(n+1+\sum_{i=0}^n\cos(2\pi q_i)\right)
\]
for $(q_0,q_1,\ldots,q_n)\in(\mathbb{R}/\mathbb{Z})^{n+1}$ and a Morse function $F_T\colon C_T\to\mathbb{R}$ by
\begin{align*}
	F_T(p_1,\ldots,p_n,q_0,q_1,\ldots,q_n)&=-n(n+2)+\frac{n+2}{2}\left(n+\sum_{i=1}^n\cos(\pi p_i)\right)\\
	&+\frac{1}{2}\left(n+1+\sum_{i=0}^n\cos(2\pi q_i)\right)
\end{align*}
for $(p_1,\ldots,p_n,q_0,q_1,\ldots,q_n)\in(\mathbb{R}/2\mathbb{Z})^n\times(\mathbb{R}/\mathbb{Z})^{n+1}$.
Let $\gamma\in C_T$ be a maximal point of $F_T$ with respect to the coordinate $(p_1,\ldots,p_n)$ and
a minimal point with respect to $(q_0,q_1,\ldots,q_n)$.
Then we have $F_T(\gamma)=0$.

Let $N_{\pm T}\subset A_R\times\mathbb{T}^{2n}$ be tubular neighborhoods of $C_{\pm T}$ of radii $\delta_{\pm T}>0$, respectively.
We extend the functions $F_{\pm T}$ to functions on $N_{\pm T}$
by making constant in the direction normal to $C_{\pm T}$.
Let $\tilde{N}_{\pm T}$ be smaller tubular neighborhoods of $C_{\pm T}$
of radii $\tilde{\delta}_{\pm T}(<\delta_{\pm T})$.
Moreover, let $\hat{\rho}_{\pm T}\colon [0,\infty )\to [0,1]$ be $C^{\infty}$-functions satisfying
\begin{align*}
\hat{\rho}_{\pm T} &=%
\begin{cases}
	1 & \text{on\: $[0,\tilde{\delta}_{\pm T})$},\\
	0 & \text{outside\: $[0,\delta_{\pm T})$},
\end{cases}\\
\hat{\rho}_{\pm T}&>0\quad \text{on\: $[0,\delta_{\pm T})$}
\end{align*}
and we define bump functions $\rho_{\pm T}\colon A_R\times\mathbb{T}^{2n}\to\mathbb{R}$ by $\rho_{\pm T}(x)=\hat{\rho}_{\pm T}\bigl(d(x,C_{\pm T})\bigr)$
where $d$ is the distance induced from a metric on $A_R\times\mathbb{T}^{2n}$.

Let $(M,\omega)$ be a symplectic manifold.
For two Hamiltonians $H,K\colon S^1\times M\to\mathbb{R}$ with compact support, we define a juxtaposition $H\natural K\colon S^1\times M\to\mathbb{R}$ of $H$ and $K$ by
\[
	(H\natural K)(t,x)=%
	\begin{cases}%
		\chi'(t)H(\chi(t),x)\ & \text{if\: $0\leq t\leq 1/2$},\\
		\chi'(t-1/2)K(\chi(t-1/2),x)\ & \text{if\: $1/2\leq t\leq 1$}
	\end{cases}
\]
where $\chi\colon [0,1/2]\to [0,1]$ is a smooth function with the properties that
$\chi'(t)\geq 0$ for all $t$, $\chi(0)=0$, $\chi(1/2)=1$,
and $\chi'$ vanishes to infinite order at $0$ and at $1/2$ (see \cite[Proof of Proposition 3.1]{Us} for details).
Since $\chi'$ vanishes to infinite order at $0$ and $1/2$, $H\natural K$ is one-periodic in time, that is,
that $H\natural K\in C_0^{\infty}(S^1\times M)$.

For a real number $\varepsilon>0$ we consider the two juxtapositions $H_T\natural(\varepsilon \rho_TF_T)$ and $H_{-T}\natural(\varepsilon \rho_{-T}F_{-T})$.
By \cite[Proof of Theorem B]{BPS}, we may assume, without loss of generality,
that $H_T\natural(\varepsilon \rho_TF_T)$ and $H_{-T}\natural(\varepsilon \rho_{-T}F_{-T})$ are one-periodic in time.
Now we choose neighborhoods $\mathcal{U}_{H_T}$ and $\mathcal{U}_{H_{-T}}$ as in Proposition \ref{proposition:nbd}.
We choose a sufficiently small $\varepsilon>0$ so that $\varepsilon<1$, $H_T\natural(\varepsilon \rho_TF_T)\in\mathcal{U}_{H_T}$ and $H_{-T}\natural(\varepsilon \rho_{-T}F_{-T})\in\mathcal{U}_{H_{-T}}$.
We then have
\[
	\mathrm{HF}_k^{[a,\infty)}(H_{\pm T};\hat{\alpha}_{\ell})=\mathrm{HF}_k^{[a,\infty)}(H_{\pm T}\natural(\varepsilon\rho_{\pm T} F_{\pm T});\hat{\alpha}_{\ell}).
\]
Moreover, we note that $\mathcal{A}_{H_T}(\gamma)=\mathcal{A}_{H_T\natural(\varepsilon \rho_TF_T)}(\gamma)$.
These perturbations enable us to compute the Floer homology groups via Po\'{z}niak's theorem (Theorem \ref{theorem:Pozniak})
and reveal their behavior when the action interval $[a,\infty)$ varies.


\begin{claim}\label{claim:1}
For all $k=0,1,\ldots,2n+1$ we have
\[
	\mathrm{HF}_k^{[a,\infty)}(H_T\natural(\varepsilon \rho_TF_T);\hat{\alpha}_{\ell})\cong%
	H_k(\mathbb{T}^{2n+1};\mathbb{Z}/2\mathbb{Z}).
\]
\end{claim}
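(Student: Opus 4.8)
The plan is to reduce the claim to Po\'{z}niak's theorem applied to the unperturbed Hamiltonian $H_T$, using the equality $\mathrm{HF}_k^{[a,\infty)}(H_T;\hat{\alpha}_\ell)=\mathrm{HF}_k^{[a,\infty)}(H_T\natural(\varepsilon\rho_TF_T);\hat{\alpha}_\ell)$ recorded just above. First one isolates the periodic trajectories of $H_T$ in the class $\hat{\alpha}_\ell$ lying in the window $(a,\infty)$. By the description of $\mathcal{P}(H_T;\hat{\alpha}_\ell)$ in the proof of Theorem \ref{theorem:main}, this set is $\mathcal{P}(r_T,\hat{\alpha}_\ell)\sqcup\mathcal{P}(r'_T,\hat{\alpha}_\ell)$, and the action formulas there give $\mathcal{A}_{H_T}\bigl(\mathcal{P}(r_T,\hat{\alpha}_\ell)\bigr)=f_{T,\ell}(r_T)>f_{T,\ell}(0)=f_T(0)>a$ (since $T$ is chosen with $f_T(0)>a$ and $r_T$ maximizes $f_{T,\ell}$ on $[-1,0]$ by property (vi) there), while $\mathcal{A}_{H_T}\bigl(\mathcal{P}(r'_T,\hat{\alpha}_\ell)\bigr)=f_{T,\ell}(r'_T)\le f_{T,\ell}(-1)=R\ell\le R\lvert\ell\rvert<a$ (using the standing hypothesis $R\lvert\ell\rvert<a$). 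Hence $P:=\{\,x\in\mathcal{P}(H_T;\hat{\alpha}_\ell)\mid a<\mathcal{A}_{H_T}(x)<\infty\,\}=\mathcal{P}(r_T,\hat{\alpha}_\ell)$, which by Lemma \ref{lemma:MB} together with $f''_T(r_T)<0$ (property (vi)) is a connected Morse--Bott manifold of periodic trajectories for $H_T$, with evaluation image $C_0=C_T\cong\mathbb{T}^{2n+1}$; moreover $a\notin\mathrm{Spec}(H_T;\hat{\alpha}_\ell)$ by the two estimates, so $H_T\in\mathcal{H}^{a,\infty}(M;\hat{\alpha}_\ell)$.

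The second step is to feed the data $(H_T,P,C_T)$ into Theorem \ref{theorem:Pozniak}. For a Morse--Smale pair on $C_T$ one takes the flat metric $g$ and the function $F_T$: since $F_T$ differs by a constant from $\tfrac{n+2}{2}\sum_{i=1}^n\cos(\pi p_i)+\tfrac12\sum_{i=0}^n\cos(2\pi q_i)$, its negative $g$-gradient flow splits as a product of flows on circles, so $(F_T,g)$ is Morse--Smale and $\mathrm{HM}_\ast(C_T,F_T,g)\cong H_\ast(\mathbb{T}^{2n+1};\mathbb{Z}/2\mathbb{Z})$. Theorem \ref{theorem:Pozniak}, with the grading convention fixed in the remark following it (applicable since $\mathbb{T}^{2n+1}$ is orientable), then yields $\mathrm{HF}_k^{[a,\infty)}(H_T;\hat{\alpha}_\ell)\cong H_k(\mathbb{T}^{2n+1};\mathbb{Z}/2\mathbb{Z})$ for all $k$, and the displayed equality gives the claim. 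Along the way one should also record, following \cite[Proof of Theorem B]{BPS}, that $H_T\natural(\varepsilon\rho_TF_T)$ is non-degenerate, that its one-periodic trajectories in $\hat{\alpha}_\ell$ with action greater than $a$ are precisely the critical points of $F_T$ on $C_T$, and that the associated filtered Floer chain complex coincides with the Morse complex $\mathrm{CM}_\ast(C_T,F_T,g)$; in particular $\gamma$ corresponds to the critical point of $F_T$ at which $F_T=0$. This chain-level description is exactly what the subsequent claims on $\sigma_{H_{-T}H_T}$ will need.

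The step demanding the most care --- a verification rather than a genuine obstacle --- is that the juxtaposition perturbation, supported only in the tubular neighborhood $N_T$ of $C_T$, creates no new $\hat{\alpha}_\ell$-trajectories of action above $a$. Inside $N_T$ this is the standard local model: for $\varepsilon$ small the orbits in $N_T$ biject with $\Cr(F_T)$ and carry actions $f_{T,\ell}(r_T)+O(\varepsilon)>a$. Outside $N_T$ the trajectories of $H_T\natural(\varepsilon\rho_TF_T)$ agree with those of $H_T$ up to the time reparametrization $\chi$, which changes neither the orbit set nor the action spectrum, and the only such family $\mathcal{P}(r'_T,\hat{\alpha}_\ell)$ has action below $a$. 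Beyond this, the remaining work is bookkeeping: the two action estimates, the Morse--Smale property of $(F_T,g)$, and the degree conventions that place the isomorphism in degree $k$.
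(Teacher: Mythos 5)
Your proposal is correct and follows essentially the same route as the paper: identify $\mathcal{P}^{[a,\infty)}(H_T;\hat{\alpha}_{\ell})=\mathcal{P}(r_T,\hat{\alpha}_{\ell})$ via the action estimates of Step \ref{step:2}, verify the Morse--Bott condition with Lemma \ref{lemma:MB}, and apply Theorem \ref{theorem:Pozniak} with the Morse function $F_T$ on $C_T\cong\mathbb{T}^{2n+1}$, transferring the result to the perturbed Hamiltonian by the equality recorded just before the claim. The additional remarks on why the juxtaposition creates no new high-action trajectories and on the chain-level identification with $\mathrm{CM}_{\ast}(C_T,F_T,g)$ are consistent with (and slightly more explicit than) what the paper does.
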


\begin{proof}
By Step \ref{step:2}, we have
\[
	\mathcal{P}^{[a,\infty)}(H_T;\hat{\alpha}_{\ell})=\mathcal{P}(r_T,\hat{\alpha}_{\ell}).
\]
By Lemma \ref{lemma:MB}, $\mathcal{P}^{[a,\infty)}(H_T;\hat{\alpha}_{\ell})$ is a Morse--Bott manifold of periodic trajectories for $H_T$.
By Theorem \ref{theorem:Pozniak}, the Floer homology $\mathrm{HF}_k^{[a,\infty)}(H_T;\hat{\alpha}_{\ell})$ coincides with
the Morse homology $\mathrm{HM}_{2n+1-k}\bigl(\mathcal{P}(r_T;\hat{\alpha}_{\ell}),F_T\bigr)$.
Therefore we have
\[
	\mathrm{HF}_k^{[a,\infty)}(H_T\natural(\varepsilon \rho_TF_T);\hat{\alpha}_{\ell})%
	\cong \mathrm{HM}_{2n+1-k}\bigl(\mathcal{P}(r_T,\hat{\alpha}_{\ell}),F_T\bigr)%
	\cong H_k(\mathbb{T}^{2n+1};\mathbb{Z}/2\mathbb{Z}).\qedhere
\]
\end{proof}

Thus for any $x\in\mathcal{P}(r_T,\hat{\alpha}_{\ell})$,
\[
	a<\mathcal{A}_{H_T\natural(\varepsilon \rho_TF_T)}(x)%
	=\mathcal{A}_{H_T}\bigl(\mathcal{P}(r_T,\hat{\alpha}_{\ell})\bigr)+\varepsilon F_T(x).
\]
We choose a positive real number $b$ such that
\[
	\mathcal{A}_{H_T}\bigl(\mathcal{P}(r_T,\hat{\alpha}_{\ell})\bigr)+\varepsilon\cdot(-1)<b<\mathcal{A}_{H_T}\bigl(\mathcal{P}(r_T,\hat{\alpha}_{\ell})\bigr)+\varepsilon F_T(\gamma)%
	=\mathcal{A}_{H_T}\bigl(\mathcal{P}(r_T,\hat{\alpha}_{\ell})\bigr).
\]
Since the minimum value of $F_T$ is $-n(n+2)$,
we obtain
\[
	a<\mathcal{A}_{H_T}\bigl(\mathcal{P}(r_T,\hat{\alpha}_{\ell})\bigr)-\varepsilon n(n+2)<b.
\]
\begin{center}
	\begin{overpic}[width=8truecm,clip]{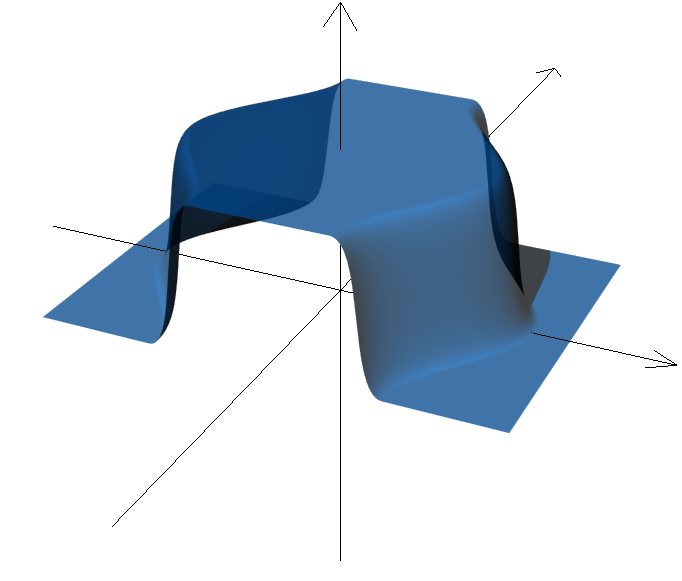}
	\put(107,84){$0$}
	\put(229,69){$p_0$}
	\put(185,176){$p_1$}
	\end{overpic}
\captionof{figure}{Outline of the graph of $H_T\natural(\varepsilon \rho_TF_T)$ in the case $n=1$.}\label{figure:5}
\end{center}

We deform $H_T\natural(\varepsilon \rho_TF_T)|_{A_R\times \mathbb{T}^{2n}\setminus U}$
small enough so that every periodic trajectory $x$ lying in $A_R\times \mathbb{T}^{2n}\setminus U$ has action less than $a$
(i.e., the tangent line at $x$ of slope $\ell$ does not take values greater than $a$ at $p_0=0$),
where $U$ is a small open neighborhood of $\{\,p_1=\cdots=p_n=0\,\}$.
We then obtain a Hamiltonian $\widetilde{H}\colon A_R\times \mathbb{T}^{2n}\to\mathbb{R}$ with compact support so that
\[
	\widetilde{H}=H_T\natural(\varepsilon \rho_TF_T)\quad \text{near}\ p_1=\cdots=p_n=0,
\]
\[
	H_T\natural(\varepsilon \rho_TF_T)\preceq\widetilde{H}\preceq H_{-T}\natural(\varepsilon \rho_{-T}F_{-T})
\]
and the Hamiltonian isotopy associated to $\widetilde{H}$ does not admit periodic trajectories of action greater than $a$ and less than $b$.
\begin{center}
	\begin{overpic}[width=8truecm,clip]{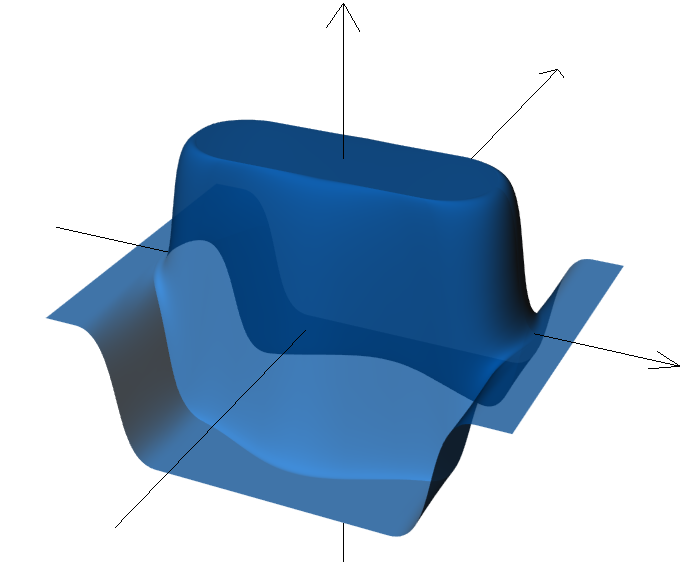}
	\put(229,69){$p_0$}
	\put(185,176){$p_1$}
	\end{overpic}
\captionof{figure}{Outline of the graph of $\widetilde{H}$ in the case $n=1$.}\label{figure:6}
\end{center}
We may assume, without loss of generality, that our monotone homotopy $\{H_s\}_s$ contains $H_T\natural(\varepsilon \rho_TF_T)$, $\widetilde{H}$ and $H_{-T}\natural(\varepsilon \rho_{-T}F_{-T})$.
By \eqref{eq:longexactseq} in Subsection \ref{subsection:monotone},
we have the following commutative diagram,
whose rows are the short exact sequences for $\widetilde{H}_T=H_T\natural(\varepsilon \rho_TF_T)$, for $\widetilde{H}$ and for $\widetilde{H}_{-T}=H_{-T}\natural(\varepsilon \rho_{-T}F_{-T})$.
\begin{equation}\label{eq:diagram}
\xymatrix{
\cdots\ar[r] & \mathrm{HF}_{\ast}^{[a,b)}(\widetilde{H}_T) \ar[d]^{\sigma_{\widetilde{H}\widetilde{H}_T}} \ar[r]^{[\iota^F]}  & \mathrm{HF}_{\ast}^{[a,\infty)}(\widetilde{H}_T) \ar[d]^{\sigma_{\widetilde{H}\widetilde{H}_T}} \ar[r]^{[\pi^F]} & \mathrm{HF}_{\ast}^{[b,\infty)}(\widetilde{H}_T) \ar[r] \ar[d]^{\sigma_{\widetilde{H}\widetilde{H}_T}}_{\cong} & \cdots \\
\cdots\ar[r] & \mathrm{HF}_{\ast}^{[a,b)}(\widetilde{H}) \ar[d]^{\sigma_{\widetilde{H}_{-T}\widetilde{H}}} \ar[r]^{[\iota^F]}  & \mathrm{HF}_{\ast}^{[a,\infty)}(\widetilde{H}) \ar[d]^{\sigma_{\widetilde{H}_{-T}\widetilde{H}}}_{\cong} \ar[r]^{[\pi^F]}_{\cong} & \mathrm{HF}_{\ast}^{[b,\infty)}(\widetilde{H}) \ar[r] \ar[d]^{\sigma_{\widetilde{H}_{-T}\widetilde{H}}} & \cdots \\
\cdots\ar[r] & \mathrm{HF}_{\ast}^{[a,b)}(\widetilde{H}_{-T}) \ar[r]^{[\iota^F]} & \mathrm{HF}_{\ast}^{[a,\infty)}(\widetilde{H}_{-T}) \ar[r]^{[\pi^F]} & \mathrm{HF}_{\ast}^{[b,\infty)}(\widetilde{H}_{-T}) \ar[r] & \cdots \\
}
\end{equation}
Here we temporarily omitted $\hat{\alpha}_{\ell}$ in the notation.
By Definition \ref{definition:degenerate} and Lemma \ref{lemma:functoriality}, note that
\begin{equation}\label{eq:composition}
	\sigma_{\widetilde{H}_{-T}\widetilde{H}}\circ\sigma_{\widetilde{H}\widetilde{H}_T}=\sigma_{H_{-T}H_T}.
\end{equation}


\begin{claim}\label{claim:2}
For all $k=0,1,\ldots,n+1$ we have
\[
	\mathrm{HF}_k^{[b,\infty)}(\widetilde{H}_T;\hat{\alpha}_{\ell})\cong%
	H_k(\mathbb{T}^{n+1};\mathbb{Z}/2\mathbb{Z}).
\]
\end{claim}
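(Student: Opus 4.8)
The plan is to compute $\mathrm{CF}_\ast^{[b,\infty)}(\widetilde{H}_T;\hat{\alpha}_{\ell})$ directly: realize it as the quotient of the Morse complex of $F_T$ by its low-action part, and then identify the surviving generators with those of a perfect Morse function on $\mathbb{T}^{n+1}$. Exactly as in the proof of Claim~\ref{claim:1}, the perturbed Hamiltonian $\widetilde{H}_T=H_T\natural(\varepsilon\rho_T F_T)$ has, via the perturbation argument underlying Theorem~\ref{theorem:Pozniak}, a Floer complex $\mathrm{CF}_\ast^{[a,\infty)}(\widetilde{H}_T;\hat{\alpha}_{\ell})$ identified with the Morse complex $\mathrm{CM}_{2n+1-\ast}(C_T,F_T,g)$ of $F_T$ on $C_T\cong\mathbb{T}^{2n+1}$ for a suitable metric $g$, the identification respecting the action filtration in the sense that the Floer generator attached to a critical point $x$ of $F_T$ carries action $A_0+\varepsilon F_T(x)$, where $A_0:=\mathcal{A}_{H_T}\bigl(\mathcal{P}(r_T,\hat{\alpha}_{\ell})\bigr)$.

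First I would enumerate the critical points of $F_T$ on $C_T\cong(\mathbb{R}/2\mathbb{Z})^n\times(\mathbb{R}/\mathbb{Z})^{n+1}$: each has every $p_i\in\{0,1\}$ and every $q_i\in\{0,1/2\}$, and if $j$ of the $p_i$ equal $1$ and $m$ of the $q_i$ equal $0$, then a short computation gives $F_T(x)=m-j(n+2)$ and Morse index $\ind(x)=(n-j)+m$. Since $b$ satisfies $A_0-\varepsilon<b<A_0$ and $F_T$ takes integer values on its critical set, a generator $x$ survives into the action window $[b,\infty)$, i.e.\ $A_0+\varepsilon F_T(x)\geq b$, precisely when $F_T(x)\geq 0$; because $0\leq m\leq n+1<n+2$ this happens exactly for the critical points with $j=0$. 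Hence $\mathrm{CF}_\ast^{[b,\infty)}(\widetilde{H}_T;\hat{\alpha}_{\ell})$ is spanned, with the differential induced from the full complex, by the critical points of $F_T$ lying on the subtorus $\Sigma=\{p_1=\cdots=p_n=0\}\cap C_T\cong\mathbb{T}^{n+1}$.

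Next I would pin down this induced differential. Taking $g$ to be a product of flat metrics and using that $F_T$ is a sum of a function of the $p_i$'s and a function of the $q_i$'s, the negative gradient flow of $F_T$ splits as a product, so any flow line joining two $j=0$ critical points has constant (critical) $p$-component and therefore remains on $\Sigma$. Consequently the complex above equals the Morse complex of $F_T|_{\Sigma}=\tfrac12\bigl(n+1+\sum_{i=0}^n\cos(2\pi q_i)\bigr)$ on $\mathbb{T}^{n+1}$, which is $F_{-T}$ up to an additive constant. The latter is a perfect Morse function: it has exactly $\binom{n+1}{m}=b_m(\mathbb{T}^{n+1})$ critical points of Morse index $m$, so its differential vanishes. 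Finally, a $j=0$ critical point with parameter $m$ has $\ind(x)=n+m$ and therefore sits in Floer degree $k=(2n+1)-(n+m)=n+1-m$; counting these by degree gives $\dim_{\mathbb{Z}/2\mathbb{Z}}\mathrm{HF}_k^{[b,\infty)}(\widetilde{H}_T;\hat{\alpha}_{\ell})=\binom{n+1}{n+1-k}=b_k(\mathbb{T}^{n+1})$ for $k=0,1,\ldots,n+1$, that is $\mathrm{HF}_k^{[b,\infty)}(\widetilde{H}_T;\hat{\alpha}_{\ell})\cong H_k(\mathbb{T}^{n+1};\mathbb{Z}/2\mathbb{Z})$, as asserted.

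The main obstacle I anticipate lies in the first step together with the flow-line argument of the third: one must verify that Poźniak's Morse--Bott reduction is genuinely compatible with the action filtration, so that $\mathrm{CF}_\ast^{[b,\infty)}(\widetilde{H}_T;\hat{\alpha}_{\ell})$ really is the superlevel quotient of the full Morse complex, and that for the product metric no gradient trajectory between two critical points of $F_T$ with $p_1=\cdots=p_n=0$ escapes $\Sigma$. Once this is secured, all that remains is the elementary observation that $\sum\cos$ is a perfect Morse function on a torus.
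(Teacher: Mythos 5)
Your proposal is correct, and it rests on the same key input as the paper's proof: the filtered version of Po\'{z}niak's theorem, i.e.\ the fact that after the juxtaposition perturbation the generator attached to a critical point $x$ of $F_T$ has action $\mathcal{A}_{H_T}\bigl(\mathcal{P}(r_T,\hat{\alpha}_{\ell})\bigr)+\varepsilon F_T(x)$, so that the window $[b,\infty)$ with $A_0-\varepsilon<b<A_0$ picks out exactly the critical points with $F_T\geq 0$, equivalently $j=0$, equivalently $p_1=\cdots=p_n=0$. Where you diverge is in how the resulting local Morse homology is evaluated. The paper identifies the superlevel set $\{F_T>-1\}$ topologically as $e^n\times\mathbb{T}^{n+1}$ and computes $\mathrm{HM}_{2n+1-k}$ by the K\"unneth formula together with $H_i(e^n,\partial e^n)\cong H_n(S^n)$ and Poincar\'e duality on $\mathbb{T}^{n+1}$; this is metric-independent and needs no control of gradient trajectories. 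You instead work at the chain level: you enumerate the critical points, compute $F_T(x)=m-j(n+2)$ and $\ind(x)=(n-j)+m$, and show that for a product flat metric the induced differential on the quotient complex is the Morse differential of the perfect function $\tfrac12\bigl(n+1+\sum\cos(2\pi q_i)\bigr)$ on $\mathbb{T}^{n+1}$, hence zero mod $2$. This is more elementary and makes the degree bookkeeping completely explicit (your $k=n+1-m$, $\dim=\binom{n+1}{k}$ matches the paper's answer), at the cost of having to verify that the product metric is Morse--Smale for $F_T$ and that no connecting trajectory between two $p=0$ critical points leaves $\Sigma$ --- both of which do hold here because the negative gradient flow splits coordinatewise and a flow line whose $p$-component joins a critical point to itself is constant in $p$. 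Either route gives the claim; yours trades the duality/K\"unneth computation for a transversality check.
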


\begin{proof}
By the choice of $b$, we obtain
\[
	\mathcal{P}^{[b,\infty)}(H_T;\hat{\alpha}_{\ell})\cong S:=\{\,F_T(p_1,\ldots,p_n,q_0,q_1,\ldots,q_n)>-1\,\}\subset\mathbb{T}^{2n}.
\]
Let $c=(p_1,\ldots,p_n,q_0,q_1,\ldots,q_n)\in S$ be a critical point of the restriction $F_T|_S$.
Then each $\cos{(\pi p_i)}$ and $\cos{(2\pi q_i)}$ take values in $\{1,-1\}$.
We claim that for any $i=1,\dots,n$ we have $\cos{(\pi p_i)}=1$.
Actually, if $\cos{(\pi p_{i_0})}=-1$ for some $i_0$ (we may assume, without loss of generality, that $i_0=n$),
then we have
\begin{align*}
	F_T(c)%
	&=-n(n+2)+\frac{n+2}{2}\left(n+\sum_{i=1}^{n-1}\cos(\pi p_i)-1\right)+\frac{1}{2}\left(n+1+\sum_{i=0}^n\cos(2\pi q_i)\right)\\
	&=-\frac{(n+1)^2}{2}+\frac{n+2}{2}\sum_{i=1}^{n-1}\cos(\pi p_i)+\frac{1}{2}\sum_{i=0}^n\cos(2\pi q_i)\\
	&\leq-\frac{(n+1)^2}{2}+\frac{n+2}{2}(n-1)+\frac{1}{2}(n+1)%
	=-1.
\end{align*}
This contradicts the fact that $F_T(c)>-1$.
Hence $S$ is homeomorphic to the product of a small open $n$-cell $e^n$ and the $(n+1)$-torus $\mathbb{T}^{n+1}$.
Theorem \ref{theorem:Pozniak} and Claim \ref{claim:1} show that the Floer homology $\mathrm{HF}_k^{[b,\infty)}(H_T;\hat{\alpha}_{\ell})$
coincides with the Morse homology $\mathrm{HM}_{2n+1-k}\bigl(\mathcal{P}^{[b,\infty)}(H_T;\hat{\alpha}_{\ell}),F_T\bigr)$
even if the action interval varies.
Hence we have
\begin{align*}
	\mathrm{HF}_k^{[b,\infty)}(\widetilde{H}_T;\hat{\alpha}_{\ell})%
	&\cong \mathrm{HM}_{2n+1-k}(e^n\times \mathbb{T}^{n+1},F_T) \\
	&\cong \bigoplus_{i+j=2n+1-k} \left(H_i(e^n,\partial e^n)\otimes_{\mathbb{Z}/2\mathbb{Z}} H_j(\mathbb{T}^{n+1})\right) \\
	&\cong H_n(S^n)\otimes_{\mathbb{Z}/2\mathbb{Z}} H_{n+1-k}(\mathbb{T}^{n+1}) \\
	&\cong H_k(\mathbb{T}^{n+1};\mathbb{Z}/2\mathbb{Z}).\qedhere
\end{align*}
\end{proof}


\begin{claim}\label{claim:3}
The homomorphism
\[
	\sigma_{\widetilde{H}\widetilde{H}_T}\colon \mathrm{HF}^{[b,\infty)}(\widetilde{H}_T;\hat{\alpha}_{\ell})\to%
	\mathrm{HF}^{[b,\infty)}(\widetilde{H};\hat{\alpha}_{\ell})
\]
is an isomorphism.
\end{claim}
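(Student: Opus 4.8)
The plan is to produce a single \emph{action-regular} monotone homotopy from $\widetilde{H}_T$ to $\widetilde{H}$ with respect to the action window $[b,\infty)$ and then invoke Lemma~\ref{lemma:actionregular}. By Lemma~\ref{lemma:functoriality} the monotone homomorphism $\sigma_{\widetilde{H}\widetilde{H}_T}$ does not depend on the monotone homotopy used to define it, so we are free to pick a convenient one. I would take the affine homotopy
\[
	K_s=\widetilde{H}_T+\beta(s)\bigl(\widetilde{H}-\widetilde{H}_T\bigr),
\]
where $\beta\colon\mathbb{R}\to[0,1]$ is smooth and non-decreasing with $\beta\equiv0$ for $s\le0$ and $\beta\equiv1$ for $s\ge1$. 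Since $\widetilde{H}_T\preceq\widetilde{H}$ means $\widetilde{H}_T\ge\widetilde{H}$ pointwise and $\beta'\ge0$, we get $\partial_sK_s=\beta'(s)\bigl(\widetilde{H}-\widetilde{H}_T\bigr)\le0$, so $\{K_s\}_s$ is a monotone homotopy from $\widetilde{H}_T$ to $\widetilde{H}$ by compactly supported Hamiltonians.

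Next I would record two properties of $\{K_s\}_s$. First, by the construction of $\widetilde{H}$ the difference $\widetilde{H}-\widetilde{H}_T$ is supported in $A_R\times\mathbb{T}^{2n}\setminus U$, where $U$ is the chosen neighbourhood of $\{p_1=\cdots=p_n=0\}$; hence $K_s\equiv\widetilde{H}_T$ on $U$ for every $s$. Second, taking $\widetilde{H}-\widetilde{H}_T$ sufficiently $C^2$-small---which is compatible with the construction of $\widetilde{H}$ and with the assumption that the monotone homotopy $\{H_s\}_s$ passes through $\widetilde{H}_T$ and $\widetilde{H}$---every periodic trajectory of $K_s$ in the class $\hat{\alpha}_{\ell}$ that is not contained in $U$ has action strictly less than $b$, uniformly in $s$. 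Indeed this is arranged by construction for $\widetilde{H}$; the periodic trajectories of $\widetilde{H}_T$ lying outside $U$ all have action at most $\max\{a,\mathcal{A}_{H_T}(\mathcal{P}(r_T,\hat{\alpha}_{\ell}))-\varepsilon\}$, which is strictly less than $b$ by the choice of $b$; and $K_s$ differs from $\widetilde{H}_T$ only on $A_R\times\mathbb{T}^{2n}\setminus U$ by an arbitrarily small amount, so a compactness argument shows that the action-$\ge b$ periodic trajectories remain inside $U$ for the whole family.

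Combining these, $\mathcal{P}^{[b,\infty)}(K_s;\hat{\alpha}_{\ell})$ equals the set of periodic trajectories of $\widetilde{H}_T$ lying in $U$ with action $\ge b$; by Claim~\ref{claim:2} and the choice of $b$ this is the $s$-independent set $\mathcal{P}^{[b,\infty)}(\widetilde{H}_T;\hat{\alpha}_{\ell})\subset U$, whose action values $\mathcal{A}_{H_T}\bigl(\mathcal{P}(r_T,\hat{\alpha}_{\ell})\bigr)+\varepsilon F_T(\cdot)$ all lie in $(b,\infty)$. In particular $b\notin\mathrm{Spec}(K_s;\hat{\alpha}_{\ell})$ for all $s$, so the path $\{K_s\}_s$ stays inside a single connected component of $\mathcal{H}^{b,\infty}(A_R\times\mathbb{T}^{2n};\hat{\alpha}_{\ell})$, i.e.\ it is action-regular. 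Lemma~\ref{lemma:actionregular} then yields that $\sigma_{\widetilde{H}\widetilde{H}_T}\colon\mathrm{HF}^{[b,\infty)}(\widetilde{H}_T;\hat{\alpha}_{\ell})\to\mathrm{HF}^{[b,\infty)}(\widetilde{H};\hat{\alpha}_{\ell})$ is an isomorphism.

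The step I expect to be the main obstacle is precisely the uniform control in the second property above: one must ensure that no interpolating Hamiltonian $K_s$ has a periodic trajectory of action $\ge b$ outside $U$ and that trajectories do not migrate across $\partial U$. This is handled by choosing $U$ with boundary transverse to the dynamics of the Hamiltonians in question---away from the thin slab supporting $\rho_T$ all of them are functions of $p$ alone, so the $p$-coordinates are preserved there---together with the $C^2$-smallness of the deformation; everything else is a routine application of Po\'{z}niak's theorem (Theorem~\ref{theorem:Pozniak}) and the functoriality of the monotone homomorphisms.
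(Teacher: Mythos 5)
Your proposal follows the same route as the paper: both prove the claim by exhibiting an action-regular monotone homotopy from $\widetilde{H}_T$ to $\widetilde{H}$ for the window $[b,\infty)$ and invoking Lemma \ref{lemma:actionregular}; the paper merely asserts such a homotopy exists, while you construct the affine one and check that $b$ stays out of the action spectrum. One minor caveat: the deformation $\widetilde{H}-\widetilde{H}_T$ need not be $C^2$-small (it has to push the actions of trajectories outside $U$ below $a$, which can require a large change), but your argument really only needs that all trajectories outside $U$ keep action strictly below $b$ throughout the homotopy, which is precisely what the construction of $\widetilde{H}$ is designed to guarantee.
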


\begin{proof}
We can choose a monotone homotopy $\{H_s\}_s$ defining the map $\sigma_{\widetilde{H}\widetilde{H}_T}$ so that
$H_s$ does not allow new periodic trajectories of action greater than $b$ for all $s$,
i.e., $\{H_s\}_s$ is action-regular.
Hence Lemma \ref{lemma:actionregular} shows that $\sigma_{\widetilde{H}\widetilde{H}_T}$ is an isomorphism.
\end{proof}


\begin{claim}\label{claim:4}
The homomorphism
\[
	[\pi^F]\colon \mathrm{HF}^{[a,\infty)}(\widetilde{H};\hat{\alpha}_{\ell})\to%
	\mathrm{HF}^{[b,\infty)}(\widetilde{H};\hat{\alpha}_{\ell})
\]
is an isomorphism.
\end{claim}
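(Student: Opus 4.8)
The plan is to deduce Claim~\ref{claim:4} from the vanishing of the sandwiched group $\mathrm{HF}^{[a,b)}(\widetilde{H};\hat{\alpha}_{\ell})$ together with the long exact sequence forming the middle row of \eqref{eq:diagram}. Recall that $\widetilde{H}$ was constructed so that its Hamiltonian isotopy admits no one-periodic trajectory in the class $\hat{\alpha}_{\ell}$ whose action lies strictly between $a$ and $b$; in particular $a,b\notin\mathrm{Spec}(\widetilde{H};\hat{\alpha}_{\ell})$, so $\widetilde{H}$ belongs to $\mathcal{H}^{a,b}(M;\hat{\alpha}_{\ell})$ and the filtered homology $\mathrm{HF}^{[a,b)}(\widetilde{H};\hat{\alpha}_{\ell})$ is defined.

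First I would verify that $\mathrm{HF}_k^{[a,b)}(\widetilde{H};\hat{\alpha}_{\ell})=0$ for every $k$. If $\widetilde{H}$ is already regular this is immediate: $\mathcal{P}^{[a,b)}(\widetilde{H};\hat{\alpha}_{\ell})=\emptyset$, so the chain group $\mathrm{CF}^{[a,b)}(\widetilde{H};\hat{\alpha}_{\ell})$ itself vanishes. In general one invokes Definition~\ref{definition:degenerate}: since $[a,b]$ avoids $\mathrm{Spec}(\widetilde{H};\hat{\alpha}_{\ell})$, a sufficiently small regular perturbation $\widetilde{H}'$ of $\widetilde{H}$ still has no trajectory in $\hat{\alpha}_{\ell}$ with action in $(a,b)$, so $\mathrm{CF}^{[a,b)}(\widetilde{H}';\hat{\alpha}_{\ell})=0$ and hence $\mathrm{HF}^{[a,b)}(\widetilde{H};\hat{\alpha}_{\ell})=0$.

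Then I would feed this into the long exact sequence
\[
	\cdots\to\mathrm{HF}_k^{[a,b)}(\widetilde{H})\xrightarrow{[\iota^F]}\mathrm{HF}_k^{[a,\infty)}(\widetilde{H})\xrightarrow{[\pi^F]}\mathrm{HF}_k^{[b,\infty)}(\widetilde{H})\to\mathrm{HF}_{k-1}^{[a,b)}(\widetilde{H})\to\cdots
\]
associated to the short exact sequence $0\to\mathrm{CF}^{[a,b)}(\widetilde{H})\to\mathrm{CF}^{[a,\infty)}(\widetilde{H})\to\mathrm{CF}^{[b,\infty)}(\widetilde{H})\to0$ (the middle row of \eqref{eq:diagram}, with $\hat{\alpha}_{\ell}$ suppressed). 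Since the terms immediately to the left and to the right of $[\pi^F]$ both vanish, exactness forces $[\pi^F]$ to be an isomorphism in every degree, which is the assertion of Claim~\ref{claim:4}.

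The only genuinely delicate point is the perturbation step: one must be sure that the construction of $\widetilde{H}$ in the previous paragraph left a genuine action gap around the whole closed interval $[a,b]$, not merely that no orbit lands in the open interval, so that passing to a nearby regular Hamiltonian cannot create trajectories slipping into that window. This is exactly what the freedom in choosing how small the deformation of $H_T\natural(\varepsilon\rho_T F_T)$ is buys us; once it is in place, the remainder is a formal consequence of the long exact sequence.
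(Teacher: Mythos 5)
Your proposal is correct and follows exactly the paper's own argument: the absence of periodic trajectories of $\widetilde{H}$ with action in $(a,b)$ gives $\mathrm{HF}^{[a,b)}(\widetilde{H};\hat{\alpha}_{\ell})=0$, and the long exact sequence in the middle row of \eqref{eq:diagram} then forces $[\pi^F]$ to be an isomorphism. The extra care you take about the regular perturbation and about both flanking terms of the exact sequence vanishing is a correct elaboration of what the paper leaves implicit.
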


\begin{proof}
Since the Hamiltonian isotopy associated to $\widetilde{H}$ does not admit periodic trajectories of action greater than $a$ and less than $b$,
we have $\mathrm{HF}^{[a,b)}(\widetilde{H};\hat{\alpha}_{\ell})=0$.
The exactness of the second row in \eqref{eq:diagram} shows Claim \ref{claim:4}.
\end{proof}


\begin{claim}\label{claim:5}
The homomorphism
\[
	\sigma_{\widetilde{H}_{-T}\widetilde{H}}\colon \mathrm{HF}^{[a,\infty)}(\widetilde{H};\hat{\alpha}_{\ell})\to%
	\mathrm{HF}^{[a,\infty)}(\widetilde{H}_{-T};\hat{\alpha}_{\ell})
\]
is an isomorphism.
\end{claim}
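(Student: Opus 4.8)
The plan is to produce a monotone homotopy from $\widetilde{H}$ to $\widetilde{H}_{-T}$ that is action-regular for the level $a$ --- that is, along which no Hamiltonian has a periodic trajectory in the class $\hat{\alpha}_{\ell}$ of action exactly $a$ --- and then to apply Lemma \ref{lemma:actionregular}. The first step is to pin down the trajectories of action $\geq a$ at the two endpoints. By Step \ref{step:2}, Claim \ref{claim:1} and the choice of $b$, those of $\widetilde{H}$ are the finitely many nondegenerate trajectories lying over the critical points of $F_T$ contained in the small neighborhood of $\{p_1=\cdots=p_n=0\}$ on which $\widetilde{H}$ still coincides with $\widetilde{H}_T$; each has action at least $\mathcal{A}_{H_T}\bigl(\mathcal{P}(r_T,\hat{\alpha}_{\ell})\bigr)>b>a$. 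By Step \ref{step:4} and the perturbation $\varepsilon\rho_{-T}F_{-T}$, those of $\widetilde{H}_{-T}$ are the trajectories lying over the critical points of $F_{-T}$ on $C_{-T}$, of action at least $\mathcal{A}_{H_{-T}}\bigl(\mathcal{Q}(r_{-T},\hat{\alpha}_{\ell})\bigr)-\varepsilon n(n+2)$; since $\mathcal{A}_{H_{-T}}\bigl(\mathcal{Q}(r_{-T},\hat{\alpha}_{\ell})\bigr)>f_{-T}(0)-u\ell>c-u\ell\geq a$ and $\varepsilon$ is chosen after $T$, after possibly shrinking $\varepsilon$ these are all $>a$. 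Every other $\hat{\alpha}_{\ell}$-trajectory of $\widetilde{H}$ or of $\widetilde{H}_{-T}$ has action $<a$: the $\mathcal{Q}(r'_{-T},\hat{\alpha}_{\ell})$-trajectories by Step \ref{step:4}, the $\mathcal{R}(r,\hat{\alpha}_{\ell})$-trajectories because their action is negative, and the trajectories of $\widetilde{H}$ supported away from $\{p_1=\cdots=p_n=0\}$ by the construction of $\widetilde{H}$.

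Next I would choose the monotone (hence pointwise non-increasing) homotopy $\{H_s\}_s$ defining $\sigma_{\widetilde{H}_{-T}\widetilde{H}}$ so that, near $\{p_1=\cdots=p_n=0\}$, the relevant data is carried by the annulus factor, in which $\{H_s\}_s$ interpolates from the bump $f_T(p_0/R)$ (carrying its Morse perturbation near $p_0=Rr_T$) to the bump of $H_{-T}$ centered at $p_0=u$ (carrying its Morse perturbation near $p_0=u+m_ur_{-T}$), through profiles each having a single local maximum and exactly two points at which the $p_0$-derivative equals $\ell$; away from $\{p_1=\cdots=p_n=0\}$ I would keep the homotopy small enough that every $\hat{\alpha}_{\ell}$-trajectory there retains action $<a$, just as for $\widetilde{H}$ itself. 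For such a family the $\hat{\alpha}_{\ell}$-trajectories near $\{p_1=\cdots=p_n=0\}$ split into a \emph{lower} family, sitting at the outer slope-$\ell$ point and hence of action $<R\lvert\ell\rvert<a$, and a \emph{higher} family, whose action lies within $\varepsilon n(n+2)$ of $\max_{p_0}\bigl(H_s-\ell p_0\bigr)$. Since $\max_{p_0}\bigl(H_s-\ell p_0\bigr)$ is non-increasing along the homotopy and equals $\mathcal{A}_{H_{-T}}\bigl(\mathcal{Q}(r_{-T},\hat{\alpha}_{\ell})\bigr)$ at the end, it never drops below $\mathcal{A}_{H_{-T}}\bigl(\mathcal{Q}(r_{-T},\hat{\alpha}_{\ell})\bigr)$, so the higher trajectories keep action $>\mathcal{A}_{H_{-T}}\bigl(\mathcal{Q}(r_{-T},\hat{\alpha}_{\ell})\bigr)-\varepsilon n(n+2)>a$ throughout. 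Hence $a\notin\mathrm{Spec}(H_s;\hat{\alpha}_{\ell})$ for every $s$, the homotopy is action-regular for the interval $[a,\infty)$, and Lemma \ref{lemma:actionregular} shows that $\sigma_{\widetilde{H}_{-T}\widetilde{H}}$ is an isomorphism.

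The main obstacle will be the construction of the homotopy on the annulus factor itself: deforming, pointwise non-increasingly, the tall bump centered at $p_0=0$ into the moderate bump centered at $p_0=u$ while maintaining at each stage a single local maximum, exactly two slope-$\ell$ points, and no spurious Morse--Bott families, and transporting the two small Morse perturbations along without creating new critical manifolds of action in $[a,\infty)$ outside a neighborhood of $\{p_1=\cdots=p_n=0\}$. I expect this to require a fairly explicit recipe --- for instance, first narrowing the central bump and only afterwards sliding its support toward $p_0=u$ while remaining everywhere above the target profile --- but it uses no input beyond the action computations already carried out in the proof of Theorem \ref{theorem:main}; once the homotopy is in place, action-regularity is exactly the monotonicity of $\max_{p_0}(H_s-\ell p_0)$ invoked above.
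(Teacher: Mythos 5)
Your proof takes exactly the route the paper does: the paper's entire argument for Claim \ref{claim:5} is ``a similar observation to Claim \ref{claim:3}'', i.e.\ choose a monotone homotopy from $\widetilde{H}$ to $\widetilde{H}_{-T}$ that is action-regular for the level $a$ and invoke Lemma \ref{lemma:actionregular}. Your action bookkeeping at the two endpoints and along the homotopy is consistent with the estimates already established in Steps \ref{step:2}--\ref{step:4}, and the construction you defer is likewise left implicit in the paper, so the proposal is correct and essentially identical in approach (just more explicit).
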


\begin{proof}
A similar observation in the proof of Claim \ref{claim:3} shows Claim \ref{claim:5}.
\end{proof}

Actually, Step \ref{step:4} directly shows that
$\mathrm{HF}^{[a,\infty)}(\widetilde{H}_{-T};\hat{\alpha}_{\ell})\cong H_{\ast}(\mathbb{T}^{n+1};\mathbb{Z}/2\mathbb{Z})$.
By Claim \ref{claim:3}--\ref{claim:5} and \eqref{eq:composition}, we deduce that the homomorphism
\[
	\left(\sigma_{H_{-T}H_T}\right)_k\colon \mathrm{HF}_k^{[a,\infty)}(H_T;\hat{\alpha}_{\ell})\to \mathrm{HF}_k^{[a,\infty)}(H_{-T};\hat{\alpha}_{\ell})
\]
is non-zero and surjective if and only if so is the homomorphism
\[
	[\pi^F]_k\colon \mathrm{HF}_k^{[a,\infty)}(\widetilde{H}_T;\hat{\alpha}_{\ell})\to%
	\mathrm{HF}_k^{[b,\infty)}(\widetilde{H}_T;\hat{\alpha}_{\ell}).
\]
Therefore,
the exactness of the first row in \eqref{eq:diagram} implies that
it is suffices, for proving the non-zeroness of $[\pi^F]_k$, to show that the homomorphism
\[
	[\iota^F]_k\colon \mathrm{HF}_k^{[a,b)}(\widetilde{H}_T;\hat{\alpha}_{\ell})\to \mathrm{HF}_k^{[a,\infty)}(\widetilde{H}_T;\hat{\alpha}_{\ell})
\]
is not surjective if $k=0,1,\ldots,n+1$.


\begin{claim}\label{claim:6}
For all $k=0,1,\ldots,2n+1$ we have
\[
	\mathrm{HF}_k^{[a,b)}(\widetilde{H}_T;\hat{\alpha}_{\ell})\cong%
	\begin{cases}
		H_k(\mathbb{T}^{2n+1};\mathbb{Z}/2\mathbb{Z}) & \text{if\: $k=n+2,\ldots,2n+1$}, \\
		\left(\mathbb{Z}/2\mathbb{Z}\right)^{\oplus\left(b_k(\mathbb{T}^{2n+1})-b_k(\mathbb{T}^{n+1})\right)} & \text{if\: $k=1,\ldots,n+1$}, \\
		0 & \text{if\: $k=0$}.
	\end{cases}
\]

\end{claim}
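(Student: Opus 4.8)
The plan is to identify the filtered Floer complex of $\widetilde{H}_T=H_T\natural(\varepsilon\rho_TF_T)$ with a truncation of the Morse complex of $F_T$ on $C_T\cong\mathbb{T}^{2n+1}$, and then to observe that this Morse complex is \emph{perfect}, so that all of its homology is obtained by counting critical points. First I would record, exactly as in the proofs of Claims \ref{claim:1} and \ref{claim:2} and using Step \ref{step:2} to see that $C_T$ carries the only critical manifold relevant in $[a,\infty)$, that the perturbation by $\varepsilon\rho_TF_T$ replaces the Morse--Bott family $\mathcal{P}(r_T,\hat{\alpha}_{\ell})\cong C_T\cong\mathbb{T}^{2n+1}$ by the finitely many one-periodic trajectories of $\widetilde{H}_T$ near $C_T$ corresponding to the critical points of $F_T\colon C_T\to\mathbb{R}$; that by Po\'{z}niak's theorem (Theorem \ref{theorem:Pozniak}) the Floer differential among these agrees, for $\varepsilon$ small, with the Morse differential of $F_T$; and that the orbit near a critical point $x$ has action $\mathcal{A}_{H_T}(\mathcal{P}(r_T,\hat{\alpha}_{\ell}))+\varepsilon F_T(x)$. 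Hence $\mathrm{CF}_k^{[a,\infty)}(\widetilde{H}_T;\hat{\alpha}_{\ell})\cong\mathrm{CM}_{2n+1-k}(C_T,F_T)$ as complexes, with the action filtration on the left corresponding to the $F_T$-filtration on the right.

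Next I would analyze $F_T$ directly. Its critical points are the tuples with each $p_i\in\{0,1\}\subset\mathbb{R}/2\mathbb{Z}$ and each $q_i\in\{0,1/2\}\subset\mathbb{R}/\mathbb{Z}$; writing $j=\#\{i:p_i=0\}$ and $m=\#\{i:q_i=0\}$, a diagonal Hessian computation shows that such a point has Morse index $j+m$ and critical value $-n(n+2)+(n+2)j+m$. Since there are $\binom{n}{j}\binom{n+1}{m}$ critical points of index $j+m$, there are $\binom{2n+1}{\mu}=b_{\mu}(\mathbb{T}^{2n+1})$ of index $\mu$, so $F_T$ is a perfect Morse function, and over $\mathbb{Z}/2\mathbb{Z}$ its Morse differential vanishes for any Morse--Smale metric (alternatively one writes $F_T=A(p)+B(q)+\mathrm{const}$ with $A,B$ perfect on $\mathbb{T}^n,\mathbb{T}^{n+1}$ and uses a product metric). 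In particular $\mathrm{CF}_{\ast}^{[a,\infty)}(\widetilde{H}_T;\hat{\alpha}_{\ell})$ has zero differential, which recovers Claim \ref{claim:1}, and so does every subquotient cut out by an action window.

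Then I would translate $[a,b)$ into a condition on $F_T$. By the choice of $b$, the lower bound $a\le\mathcal{A}_{H_T}(\mathcal{P}(r_T,\hat{\alpha}_{\ell}))+\varepsilon F_T(x)$ holds for every critical point $x$ (because $F_T\ge-n(n+2)$ and $a<\mathcal{A}_{H_T}(\mathcal{P}(r_T,\hat{\alpha}_{\ell}))-\varepsilon n(n+2)$), while the upper bound is equivalent to $F_T(x)<\bigl(b-\mathcal{A}_{H_T}(\mathcal{P}(r_T,\hat{\alpha}_{\ell}))\bigr)/\varepsilon\in(-1,0)$, hence—since $F_T$ takes integer values at its critical points—to $F_T(x)\le-1$, which a one-line check shows is equivalent to $j\le n-1$ (for $j=n$ one has $(n+2)n+m>n(n+2)-1$, and for $j\le n-1$ one has $(n+2)j+m\le(n+2)(n-1)+(n+1)=n(n+2)-1$). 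Therefore $\mathrm{CF}_{\ast}^{[a,b)}(\widetilde{H}_T;\hat{\alpha}_{\ell})$ is spanned by the critical points with $j\le n-1$, and since the differential vanishes,
\[
	\dim_{\mathbb{Z}/2\mathbb{Z}}\mathrm{HF}_k^{[a,b)}(\widetilde{H}_T;\hat{\alpha}_{\ell})=\#\{\,x:\ j+m=2n+1-k,\ j\le n-1\,\}=\binom{2n+1}{k}-\binom{n+1}{k},
\]
the subtracted term being the count $\binom{n}{n}\binom{n+1}{n+1-k}=\binom{n+1}{k}$ of the excluded critical points with $j=n$, present precisely when $0\le k\le n+1$. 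Since $b_k(\mathbb{T}^{2n+1})=\binom{2n+1}{k}$ and $b_k(\mathbb{T}^{n+1})=\binom{n+1}{k}$ (the latter vanishing for $k>n+1$), this yields the first case for $k\ge n+2$, the middle case for $1\le k\le n+1$, and the value $1-1=0$ for $k=0$.

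I expect the only delicate point to be the first step—making precise that Po\'{z}niak's theorem, in the perturbed form already used for Claims \ref{claim:1} and \ref{claim:2}, gives a \emph{filtered} identification of the Floer complex of $\widetilde{H}_T$ with the Morse complex of $F_T$ that remains valid as the action interval is moved. Once that is granted, the rest is the bookkeeping above, whose whole content is that $F_T$ is a perfect Morse function and that the window $[a,b)$ discards exactly its critical points that are maximal in the $p$-directions.
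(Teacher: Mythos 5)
Your computation is correct and lands on the same answer, but the homological core of your argument is genuinely different from the paper's. Both proofs share the setup: the perturbation $\widetilde{H}_T=H_T\natural(\varepsilon\rho_TF_T)$, Po\'{z}niak's theorem, and the choice of $b$ so that the threshold $(b-\mathcal{A}_{H_T}(\mathcal{P}(r_T,\hat{\alpha}_{\ell})))/\varepsilon$ lies in $(-1,0)$. From there the paper identifies the portion of the critical manifold seen by the window $[a,b)$ with the subset $(\mathbb{T}^n\setminus e^n)\times\mathbb{T}^{n+1}$ of $C_T$ and computes its Morse homology via the K\"unneth formula together with Poincar\'e--Lefschetz duality ($H_i(\mathbb{T}^n\setminus e^n)\cong\widetilde{H}_{n-i}(\mathbb{T}^n)$), whereas you stay at the chain level: you observe that $F_T$ is a perfect Morse function on $\mathbb{T}^{2n+1}$ (so the mod-$2$ Morse differential vanishes), that its critical values are the integers $-n(n+2)+(n+2)j+m$, and that the window $[a,b)$ retains exactly the critical points with $F_T\le -1$, i.e.\ $j\le n-1$; the claim then reduces to the binomial identity $\binom{2n+1}{k}-\binom{n+1}{k}=b_k(\mathbb{T}^{2n+1})-b_k(\mathbb{T}^{n+1})$. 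Your route is more elementary and makes the case split in the statement transparent (the excluded points with $j=n$ contribute $\binom{n+1}{k}$ precisely for $0\le k\le n+1$), at the cost of leaning somewhat harder on the \emph{filtered, chain-level} form of the Po\'{z}niak/perturbation correspondence --- namely that the action filtration on $\mathrm{CF}_{\ast}(\widetilde{H}_T;\hat{\alpha}_{\ell})$ matches the $F_T$-filtration on $\mathrm{CM}_{\ast}(C_T,F_T)$. You rightly flag this as the delicate point; note that the paper relies on essentially the same filtered statement (``even if the action interval varies''), only at the level of homology of sublevel pieces rather than of the full chain complex, so you are not assuming more than the paper does in substance.
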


\begin{proof}
By the choice of $b$, the set $\mathcal{P}^{[a,b)}(H_T;\hat{\alpha}_{\ell})$ is the product
$(\mathbb{T}^n\setminus e^n)\times\mathbb{T}^{n+1}$.
Theorem \ref{theorem:Pozniak} and Claim \ref{claim:1} show that the Floer homology $\mathrm{HF}_k^{[a,b)}(H_T;\hat{\alpha}_{\ell})$
coincides with the Morse homology
$\mathrm{HM}_{2n+1-k}\bigl(\mathcal{P}^{[a,b)}(H_T;\hat{\alpha}_{\ell}),F_T\bigr)$.
Hence we have
\begin{align*}
	\mathrm{HF}_k^{[a,b)}(\widetilde{H}_T;\hat{\alpha}_{\ell})%
	&\cong \mathrm{HM}_{2n+1-k}\left((\mathbb{T}^n\setminus e^n)\times\mathbb{T}^{n+1},F_T\right)\\
	&\cong \bigoplus_{i+j=2n+1-k} \left(H_i(\mathbb{T}^n\setminus e^n)\otimes_{\mathbb{Z}/2\mathbb{Z}} H_j(\mathbb{T}^{n+1})\right).
\end{align*}
By Poincar\'e(--Lefschetz) duality, we have $H_j(\mathbb{T}^{n+1})\cong H_{n+1-j}(\mathbb{T}^{n+1})$ and
\[
	H_i(\mathbb{T}^n\setminus e^n)%
	\cong H_{n-i}(\mathbb{T}^n\setminus e^n,\partial e^n)%
	\cong\widetilde{H}_{n-i}(\mathbb{T}^n).
\]
Therefore,
\begin{align*}
	\bigoplus_{i+j=2n+1-k} \left(H_i(\mathbb{T}^n\setminus e^n)\otimes_{\mathbb{Z}/2\mathbb{Z}} H_j(\mathbb{T}^{n+1})\right)%
	&\cong\bigoplus_{i+j=k} \left(\widetilde{H}_i(\mathbb{T}^n)\otimes_{\mathbb{Z}/2\mathbb{Z}} H_j(\mathbb{T}^{n+1})\right).
\end{align*}
Since
\begin{align*}
	H_k(\mathbb{T}^{2n+1})
	&\cong\bigoplus_{i+j=k} \left(H_i(\mathbb{T}^n)\otimes_{\mathbb{Z}/2\mathbb{Z}} H_j(\mathbb{T}^{n+1})\right)\\
	&\cong\bigoplus_{\substack{i+j=k\\ i\neq0}} \left(H_i(\mathbb{T}^n)\otimes_{\mathbb{Z}/2\mathbb{Z}} H_j(\mathbb{T}^{n+1})\right)%
	\oplus\bigoplus_{j=k} \left(H_0(\mathbb{T}^n)\otimes_{\mathbb{Z}/2\mathbb{Z}} H_j(\mathbb{T}^{n+1})\right)\\
	&\cong\bigoplus_{i+j=k} \left(\widetilde{H}_i(\mathbb{T}^n)\otimes_{\mathbb{Z}/2\mathbb{Z}} H_j(\mathbb{T}^{n+1})\right)%
	\oplus H_k(\mathbb{T}^{n+1}),
\end{align*}
we conclude that
\[
	\mathrm{HF}_k^{[a,b)}(\widetilde{H}_T;\hat{\alpha}_{\ell})%
	\cong\left(\mathbb{Z}/2\mathbb{Z}\right)^{\oplus\left(b_k(\mathbb{T}^{2n+1})-b_k(\mathbb{T}^{n+1})\right)}.
\]
We note that $\left(\mathbb{Z}/2\mathbb{Z}\right)^{\oplus\left(b_0(\mathbb{T}^{2n+1})-b_0(\mathbb{T}^{n+1})\right)}=0$
and $b_k(\mathbb{T}^{n+1})=0$ for any $k\geq n+2$.
\end{proof}

Thus Claim \ref{claim:6} shows that the homomorphism
\[
	[\iota^F]_k\colon \mathrm{HF}_k^{[a,b)}(\widetilde{H}_T;\hat{\alpha}_{\ell})\to \mathrm{HF}_k^{[a,\infty)}(\widetilde{H}_T;\hat{\alpha}_{\ell})
\]
is not surjective if $R\lvert\ell\rvert<a\leq c-u\ell$ and $k=0,1,\ldots,n+1$.

On the other hand, if $R\lvert\ell\rvert<a\leq c-u\ell$ and $k=0,1,\ldots,n+1$,
by the fundamental homomorphism theorem and the exactness of the first row in \eqref{eq:diagram},
then we have the isomorphism
\[
	\overline{[\pi^F]}_k\colon \mathrm{HF}_k^{[a,\infty)}(\widetilde{H}_T;\hat{\alpha}_{\ell})/\Image [\iota^F]_k\xrightarrow{\cong}%
	\Image [\pi^F]_k.
\]
Since $[\iota^F]_k$ is induced by the inclusion, we obtain
\begin{align*}
	\dim_{\mathbb{Z}/2\mathbb{Z}}\left(\Image [\pi^F]_k\right)%
	&=\dim_{\mathbb{Z}/2\mathbb{Z}}\left(\mathrm{HF}_k^{[a,\infty)}(\widetilde{H}_T;\hat{\alpha}_{\ell})/\Image [\iota^F]_k\right)\\
	&=b_k(\mathbb{T}^{n+1})%
	=\dim_{\mathbb{Z}/2\mathbb{Z}}\left(\mathrm{HF}_k^{[b,\infty)}(\widetilde{H}_T;\hat{\alpha}_{\ell})\right).
\end{align*}
Hence the homomorphism
\[
	[\pi^F]_k\colon \mathrm{HF}_k^{[a,\infty)}(\widetilde{H}_T;\hat{\alpha}_{\ell})\to%
	\mathrm{HF}_k^{[b,\infty)}(\widetilde{H}_T;\hat{\alpha}_{\ell})
\]
is surjective if $R\lvert\ell\rvert<a\leq c-u\ell$ and $k=0,1,\ldots,n+1$.
Thus the proof of Theorem \ref{theorem:main2} is complete.
\end{proof}


\section{Annular capacity}\label{section:4}

To prove Theorem \ref{theorem}, we introduce a homological ``annular'' capacity.
This capacity is defined in terms of the homological Biran--Polterovich--Salamon capacity \cite{BPS}.

\subsection{Homological relative capacity}

Let $(M,\omega)$ be an open symplectic manifold and $A\subset M$ a compact subset.
For any nontrivial free homotopy class $\alpha\in [S^1,M]$ and $c>0$ we define the set
\[
	\boldsymbol{\mathscr{A}}_c(M,A;\alpha)=\left\{\,s\in\mathbb{R}\ (s>0\ \text{if}\ \alpha=0)\relmiddle| T_{\alpha}^{[s,\infty);c}\neq 0\,\right\},
\]
where $T_{\alpha}^{[s,\infty);c}\colon {\SH}^{[s,\infty)}(M;\alpha)\to {\RSH}^{[s,\infty);c}(M,A;\alpha)$ is a homomorphism defined in Subsection \ref{subsection:symp}.

\begin{definition}[{\cite[Subsection 4.9]{BPS}}]
For $\alpha\in [S^1,M]$ and $a\geq -\infty$
we define the \textit{homological Biran--Polterovich--Salamon capacity} $\widehat{C}_{\mathrm{BPS}}(M,A;\alpha,a)\geq 0$ by
\[
	\widehat{C}_{\mathrm{BPS}}(M,A;\alpha,a)=%
	\inf\left\{\, c>0\relmiddle| \sup\boldsymbol{\mathscr{A}}_c(M,A;\alpha)>a\,\right\}.
\]
\end{definition}

Here we use the convention that $\inf\emptyset =\infty$ and $\sup\emptyset =-\infty$.
Let $(N,\omega_N)$ be a closed connected symplectic manifold and $X\subset N$ a compact subset.
Let $A_R$ denote the annulus $(-R,R)\times\mathbb{R}/\mathbb{Z}$,
and for $u\in(-R,R)$ define its subset $L_u=\{u\}\times\mathbb{R}/\mathbb{Z}\subset A_R$.
For $\ell\in\mathbb{Z}$ put
\[
	\alpha_{\ell}=[t\mapsto (0,\ell t)]\in [S^1,A_R].
\]

\begin{definition}
For $R>0$, $u\in(-R,R)$, $\ell\in\mathbb{Z}$ and $a\geq -\infty$,
we define a homological relative capacity $\widehat{C}(N,X;R,u,\ell,a)$ by
\[
	\widehat{C}(N,X;R,u,\ell,a)=\widehat{C}_{\mathrm{BPS}}(A_R\times N,L_u\times X;(\alpha_{\ell},0_N),a).
\]
\end{definition}

Here $A_R\times N$ is considered as the symplectic manifold equipped
with the product symplectic form $(dp_0\wedge dq_0)\oplus\omega_N$ where $(p_0,q_0)\in A_R=(-R,R)\times\mathbb{R}/\mathbb{Z}$.


\subsection{Proof of Theorem \ref{theorem}}\label{subsection:proofofmain}

In the case that $N=\mathbb{T}^{2n}$ and $X=\mathbb{T}^n$,
we can compute the capacity $\widehat{C}$ directly due to Theorem \ref{theorem:main2}.

\begin{proposition}\label{proposition:capacity}
For any $R>0$, $u\in(-R,R)$, $\ell\in\mathbb{Z}$ and $a\geq -\infty$,
\[
	\widehat{C}(\mathbb{T}^{2n},\mathbb{T}^n;R,u,\ell,a)=\max\{R\lvert\ell\rvert+u\ell,a+u\ell\}.
\]
\end{proposition}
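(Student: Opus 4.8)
The plan is to unwind the definition of $\widehat{C}_{\mathrm{BPS}}$ using the precise non-vanishing criterion for $T_{\hat{\alpha}_{\ell}}^{[a,\infty);c}$ supplied by Theorem~\ref{theorem:main2}. Fix $R>0$, $u\in(-R,R)$, $\ell\in\mathbb{Z}$ and $a\geq-\infty$. By Theorem~\ref{theorem:main2} (together with its $\ell=0$ case, for which $R\lvert\ell\rvert=u\ell=0$), the homomorphism $T_{\hat{\alpha}_{\ell}}^{[s,\infty);c}$ is non-zero precisely when $R\lvert\ell\rvert<s\leq c-u\ell$, since then $(T_{\hat{\alpha}_{\ell}}^{[s,\infty);c})_0$ already does not vanish. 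Hence, for each $c>0$,
\[
	\boldsymbol{\mathscr{A}}_c(A_R\times\mathbb{T}^{2n},L_u\times\mathbb{T}^n;\hat{\alpha}_{\ell})=\{\,s\in\mathbb{R}\mid R\lvert\ell\rvert<s\leq c-u\ell\,\},
\]
the half-open interval $(R\lvert\ell\rvert,\,c-u\ell]$, which is empty exactly when $c-u\ell\leq R\lvert\ell\rvert$; when $\ell=0$ the extra requirement ``$s>0$'' in the definition of $\boldsymbol{\mathscr{A}}_c$ is automatic because $R\lvert\ell\rvert=0$. One should observe here that $R\lvert\ell\rvert<s\leq c-u\ell$ forces $c>\max\{u\ell,0\}$, so the hypotheses of Theorem~\ref{theorem:main} (hence of Theorem~\ref{theorem:main2}) are indeed met throughout; both this and the inequality $R\lvert\ell\rvert+u\ell\geq0$ used below are consequences of $\lvert u\rvert<R$.

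Next I would compute the supremum: $\sup\boldsymbol{\mathscr{A}}_c$ equals $c-u\ell$ when $c-u\ell>R\lvert\ell\rvert$ and equals $-\infty$ otherwise (by the convention $\sup\emptyset=-\infty$). Consequently, for $a\geq-\infty$, the inequality $\sup\boldsymbol{\mathscr{A}}_c>a$ holds if and only if $c-u\ell>R\lvert\ell\rvert$ and $c-u\ell>a$, that is, if and only if
\[
	c>\max\{R\lvert\ell\rvert+u\ell,a+u\ell\},
\]
where $a+u\ell=-\infty$ when $a=-\infty$ and the bound then reads $c>R\lvert\ell\rvert+u\ell$.

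It remains to take the infimum over $c>0$:
\[
	\widehat{C}(\mathbb{T}^{2n},\mathbb{T}^n;R,u,\ell,a)=\inf\bigl\{\,c>0\mid c>\max\{R\lvert\ell\rvert+u\ell,a+u\ell\}\,\bigr\}.
\]
Since $R\lvert\ell\rvert+u\ell\geq0$, the number $\max\{R\lvert\ell\rvert+u\ell,a+u\ell\}$ is always non-negative, so this infimum equals $\max\{R\lvert\ell\rvert+u\ell,a+u\ell\}$ --- including the degenerate case where the maximum is $0$, which forces $\ell=0$ and $a\leq0$ and in which $\inf\{c>0\mid c>0\}=0$. This is the claimed formula. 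The computation is pure bookkeeping once Theorem~\ref{theorem:main2} is in hand; the only genuinely delicate points are the edge cases $\ell=0$, $a=-\infty$ and $\max\{R\lvert\ell\rvert+u\ell,a+u\ell\}=0$, and the recurring use of $u\in(-R,R)$ to guarantee both the applicability of Theorem~\ref{theorem:main2} and the non-negativity of $R\lvert\ell\rvert+u\ell$.
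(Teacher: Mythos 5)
Your proposal is correct and follows essentially the same route as the paper: apply Theorem \ref{theorem:main2} to identify $\boldsymbol{\mathscr{A}}_c(A_R\times\mathbb{T}^{2n},L_u\times\mathbb{T}^n;\hat{\alpha}_{\ell})$ with the interval $(R\lvert\ell\rvert,c-u\ell]$, compute its supremum, and take the infimum over $c>0$. Your treatment is in fact slightly more careful than the paper's about the edge cases ($\ell=0$, the empty interval, and the verification that the non-vanishing condition forces $c>\max\{u\ell,0\}$ so that Theorem \ref{theorem:main2} applies), but the underlying argument is identical.
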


\begin{proof}
Let $\hat{\alpha}_{\ell}=(\alpha_{\ell},0_{\mathbb{T}^{2n}})\in [S^1,A_R\times\mathbb{T}^{2n}]$.
By Theorem \ref{theorem:main2}, the homomorphism
\[
	T_{\hat{\alpha}_{\ell}}^{[s,\infty);c}\colon {\SH}^{[s,\infty)}(A_R\times \mathbb{T}^{2n};\hat{\alpha}_{\ell})%
	\to {\RSH}^{[s,\infty);c}(A_R\times \mathbb{T}^{2n},L_u\times \mathbb{T}^n;\hat{\alpha}_{\ell})
\]
is non-zero if and only if $R\lvert\ell\rvert<s\leq c-u\ell$.
Hence we have
\[
	\boldsymbol{\mathscr{A}}_c(A_R\times \mathbb{T}^{2n},L_u\times \mathbb{T}^n;\hat{\alpha}_{\ell})=(R\lvert\ell\rvert,c-u\ell]
\]
for any $c>0$ such that $R\lvert\ell\rvert\leq c-u\ell$.
Therefore, for any $a\geq -\infty$ we have
\begin{align*}
	\widehat{C}(\mathbb{T}^{2n},\mathbb{T}^n;R,u,\ell,a)&=\inf\left\{\,c>0\relmiddle|%
	\sup\boldsymbol{\mathscr{A}}_c(A_R\times \mathbb{T}^{2n},L_u\times \mathbb{T}^n;\hat{\alpha}_{\ell})>a\,\right\}\\%
	&=\max\{R\lvert\ell\rvert+u\ell,a+u\ell\}.\qedhere
\end{align*}
\end{proof}

Proposition \ref{proposition:hat} relates the capacities $\widehat{C}(N,X;R,u,\ell,a)$ and $C(N,X;R,u,\ell,a)$.

\begin{proposition}[{\cite[Proposition 4.9.1]{BPS}}]\label{proposition:hat}
Let $R>0$ and $u$ be real numbers such that $u\in(-R,R)$.
Let $\ell$ be an integer and $a$ a real number.
If $\widehat{C}(N,X;R,u,\ell,a)<\infty$
then every Hamiltonian $H$ with compact support on $S^1\times A_R\times N$
with $H|_{S^1\times L_u\times X}\geq \widehat{C}(N,X;R,u,\ell,a)$ has
a one-periodic trajectory $x$ in the homotopy class $(\alpha_{\ell},0_N)$ with the action $\mathcal{A}_H(x)\geq a$.
In particular,
\[
	\widehat{C}(N,X;R,u,\ell,a)\geq C(N,X;R,u,\ell,a).
\]
\end{proposition}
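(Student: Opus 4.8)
The plan is to argue by contradiction, exploiting the factorization $T_\alpha^{[s,\infty);c}=\iota_H\circ\pi_H$ provided by Proposition~\ref{proposition:diagram}. Write $M=A_R\times N$, $A=L_u\times X$, $\alpha=(\alpha_\ell,0_N)$, and $c_0=\widehat{C}_{\mathrm{BPS}}(M,A;\alpha,a)=\widehat{C}(N,X;R,u,\ell,a)$, which we assume is finite. I would first establish the \emph{strict} version: every $H\in\mathcal{H}=C_0^\infty(S^1\times M)$ with $\inf_{S^1\times A}H>c_0$ has a trajectory $x\in\mathcal{P}(H;\alpha)$ with $\mathcal{A}_H(x)\ge a$. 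Since $c_0$ is the infimum of the set $\{c>0\mid\sup\boldsymbol{\mathscr{A}}_c(M,A;\alpha)>a\}$ and $\inf_{S^1\times A}H>c_0$, this set contains some $c$ with $c<\inf_{S^1\times A}H$; hence there is a real number $s>a$ (and $s>0$ if $\alpha=0_M$, consistently with Remark~\ref{remark:0}) with $T_\alpha^{[s,\infty);c}\neq0$.

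Now suppose, for contradiction, that $\mathcal{A}_H(x)<a$ for every $x\in\mathcal{P}(H;\alpha)$. Then $H$ has no $\alpha$-trajectory of action $\ge s$; in particular $s\notin\mathrm{Spec}(H;\alpha)$, so $H\in\mathcal{H}^{s,\infty}(M;\alpha)$, and since $\inf_{S^1\times A}H>c$ we get $H\in\mathcal{H}_c^{s,\infty}(M,A;\alpha)$. Because $s\notin\mathrm{Spec}(H;\alpha)$ and all actions of trajectories in $\mathcal{P}(H;\alpha)$ are $<a<s$, a regular Hamiltonian $\widetilde H$ sufficiently $C^\infty$-close to $H$ still has no $\alpha$-trajectory of action $\ge s$, whence $\mathrm{CF}^{[s,\infty)}(\widetilde H;\alpha)=0$ and, by Definition~\ref{definition:degenerate}, $\mathrm{HF}^{[s,\infty)}(H;\alpha)=\mathrm{HF}^{[s,\infty)}(\widetilde H;\alpha)=0$. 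But Proposition~\ref{proposition:diagram}, applied with $H_0=H_1=H$, shows that $T_\alpha^{[s,\infty);c}$ factors as $\iota_H\circ\pi_H$ through $\mathrm{HF}^{[s,\infty)}(H;\alpha)=0$, so $T_\alpha^{[s,\infty);c}=0$ — a contradiction. This proves the strict case.

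To reach the hypothesis as stated, namely $\inf_{S^1\times A}H=c_0$, I would perturb. Fix $\beta\in C_0^\infty(M)$ with $0\le\beta\le1$ and $\beta\equiv1$ near $A$, and set $H_k=H+\tfrac1k\beta\in\mathcal{H}$, so that $\inf_{S^1\times A}H_k=c_0+\tfrac1k>c_0$. The strict case yields $x_k\in\mathcal{P}(H_k;\alpha)$ with $\mathcal{A}_{H_k}(x_k)\ge a$. All $x_k$ lie in the fixed compact set $\mathrm{supp}\,H\cup\mathrm{supp}\,\beta$ (a trajectory representing $\alpha\neq0$ cannot be constant and hence cannot exit the support; when $\ell=0$ one uses instead $a>0$, so a constant trajectory of action $\ge a$ also lies in the support), and $\dot x_k=X_{H_k}(x_k)$ is uniformly bounded because $X_{H_k}\to X_H$ in $C^\infty$ on compact sets. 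By Arzel\`{a}--Ascoli and bootstrapping the Hamilton ODE, a subsequence converges in $C^\infty$ to a one-periodic trajectory $x$ of $X_H$; for $k$ large $x_k$ is $C^0$-close to $x$ and hence freely homotopic to it, so $x$ represents $\alpha$. Choosing the capping annuli $\bar x_k$ compatibly with $\bar x$ — the short connecting annuli have $\omega$-area tending to $0$, and the $\alpha$-atoroidal hypothesis makes the choice immaterial — one obtains $\mathcal{A}_{H_k}(x_k)\to\mathcal{A}_H(x)$, so $\mathcal{A}_H(x)\ge a$, as required. The ``in particular'' follows at once: the above shows that whenever $\widehat{C}_{\mathrm{BPS}}(M,A;\alpha,a)=c_0<\infty$, every $H$ with $\inf_{S^1\times A}H\ge c_0$ admits an $\alpha$-trajectory of action $\ge a$, so $c_0$ lies in (or, if $c_0=0$, is the infimum of) the defining set of $C_{\mathrm{BPS}}(M,A;\alpha,a)$; hence $C_{\mathrm{BPS}}\le\widehat{C}_{\mathrm{BPS}}$, and specializing to $M=A_R\times N$, $A=L_u\times X$, $\alpha=(\alpha_\ell,0_N)$ gives $\widehat{C}(N,X;R,u,\ell,a)\ge C(N,X;R,u,\ell,a)$.

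The strict case is a routine consequence of the functoriality in Proposition~\ref{proposition:diagram}; I expect the real work to be the boundary case, where the Arzel\`{a}--Ascoli / Gromov-type compactness must simultaneously guarantee that the limiting trajectory represents the \emph{correct} free homotopy class and that $\mathcal{A}_{H_k}(x_k)\to\mathcal{A}_H(x)$. The bookkeeping of reference loops and capping surfaces (harmless thanks to the $\alpha$-atoroidal assumption) is the only delicate point.
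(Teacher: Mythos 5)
Your proof is correct. The paper does not actually prove this proposition---it is imported from \cite[Proposition 4.9.1]{BPS}---and your argument is precisely the standard one behind that result: for $c$ in the defining set with $c<\inf_{S^1\times A}H$ and $s>a$ with $T_{\alpha}^{[s,\infty);c}\neq 0$, the factorization $T_{\alpha}^{[s,\infty);c}=\iota_H\circ\pi_H$ from Proposition \ref{proposition:diagram} forces $\mathrm{HF}^{[s,\infty)}(H;\alpha)\neq 0$ and hence an orbit of action $\geq s>a$, while the boundary case $\inf_{S^1\times A}H=\widehat{C}$ is handled, as you do, by perturbing to the strict case and passing to a limit via Arzel\`{a}--Ascoli (the $\alpha$-atoroidal hypothesis making the action limit well defined).
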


\begin{proof}[Proof of Theorem \ref{theorem:main3}]
By Proposition \ref{proposition:capacity} and Proposition \ref{proposition:hat}, it is enough to show that
\[
	C(\mathbb{T}^{2n},\mathbb{T}^n;R,u,\ell,a)\geq \max\{R\lvert\ell\rvert+u\ell,a+u\ell\}.
\]
If $\ell=0$ and $a\leq 0$,
then every Hamiltonian $H$ with compact support has a contractible periodic trajectory whose action is zero
and hence $C(\mathbb{T}^{2n},\mathbb{T}^n;R,u,0,a)=0$.
Thus we assume that either $\ell=0$ and $a>0$, or $\ell\neq 0$.
We set $m=\max\{R\lvert\ell\rvert+u\ell,a+u\ell\}$.
For any $\delta>0$ choose a smooth function $f\colon (-R,R)\to\mathbb{R}$ with compact support satisfying
\[
	f(r)=%
	\begin{cases}
		m-\delta & \text{for}\ r\ \text{near}\ u, \\
		0 & \text{for}\ r\ \text{near}\ \pm R,
	\end{cases}
\]
\[
	f(r)<%
	\begin{cases}
		\frac{R+r}{R+u}m & \text{if\ $-R<r\leq u$},\\
		\frac{R-r}{R-u}m & \text{if\ $u\leq r<R$},
	\end{cases}
\]
and
\[
	f'(r)
	\begin{cases}
	<\frac{m}{R+u} & \text{if\ $-R<r\leq u$},\\
	>-\frac{m}{R-u} & \text{if\ $u\leq r<R$}.
	\end{cases}
\]
Now consider the Hamiltonian $H\colon A_R\times \mathbb{T}^{2n}\to\mathbb{R}$ with compact support given by
$H(p,q)=f(p_0)$.
Then every periodic trajectory $x\colon t\mapsto x(t)=\bigl(p(t),q(t)\bigr)$
of $H$ in the homotopy class $(\alpha_{\ell},0_{\mathbb{T}^{2n}})\in [S^1,A_R\times\mathbb{T}^{2n}]$ satisfies
\[
	\dot{q}_0\equiv\ell,\ \dot{q}_1\equiv\cdots\equiv \dot{q}_n\equiv 0,\ %
	p_0(t)=r,\ \dot{p}_1\equiv\cdots\equiv\dot{p}_n\equiv 0,
\]
where $r\in(-R,R)$ is such that $f'(r)=\ell$.
Moreover, the action of $x$ is $\mathcal{A}_H(x)=f(r)-r\ell$.
If $R\lvert\ell\rvert\geq a$,
then we have $m=R\lvert\ell\rvert+u\ell$ and hence
\[
	f'(r)
	\begin{cases}
	<\frac{R\lvert\ell\rvert+u\ell}{R+u}=\ell & \text{if\ $-R<r\leq u$},\\
	>-\frac{R\lvert\ell\rvert+u\ell}{R-u}=\ell & \text{if\ $u\leq r<R$}.
	\end{cases}
\]
Here we note that $\ell\geq 0$ (resp.\ $\ell<0$) if and only if $r\leq u$ (resp.\ $r>u$).
The above observation contradicts the fact that $f'(r)=\ell$.
Hence there is no one-periodic trajectory of length $\lvert\ell\rvert$.
We assume that $R\lvert\ell\rvert< a$.
Then $m=a+u\ell$.
If $\ell\geq 0$, then
\begin{align*}
	\mathcal{A}_H(x)&=f(r)-r\ell<\frac{R+r}{R+u}m-r\ell=\frac{(R+r)a+(u-r)R\ell}{R+u}\\
	&<\frac{(R+r)a+(u-r)a}{R+u}=a.
\end{align*}
If $\ell<0$, then
\begin{align*}
	\mathcal{A}_H(x)&=f(r)-r\ell<\frac{R-r}{R-u}m-r\ell=\frac{(R-r)a-(r-u)R\ell}{R-u}\\
	&=\frac{(R-r)a+(r-u)R\lvert\ell\rvert}{R-u}<\frac{(R-r)a+(r-u)a}{R-u}=a.
\end{align*}
Thus there is no periodic trajectory in $(\alpha_{\ell},0_{\mathbb{T}^{2n}})$ whose action is at least $a$.

As a conclusion, we obtain that
\[
	C(\mathbb{T}^{2n},\mathbb{T}^n;R,u,\ell,a)=C_{\mathrm{BPS}}(A_R\times \mathbb{T}^{2n},L_u\times \mathbb{T}^n;(\alpha_\ell,0_{\mathbb{T}^{2n}}),a)%
	\geq m-\delta
\]
for any $\delta>0$.
It means that
\[
	C(\mathbb{T}^{2n},\mathbb{T}^n;R,u,\ell,a)\geq m.\qedhere
\]
\end{proof}

Finally we prove Theorem \ref{theorem}.

\begin{proof}[Proof of Theorem \ref{theorem}]
By \cite[Proof of Theorem B]{BPS}, we may assume, without loss of generality,
that $H$ is one-periodic in time.
If $\ell=0$ and $a\leq 0$,
then every Hamiltonian $H$ with compact support has infinitely many contractible periodic trajectories whose actions are zero,
and hence Theorem \ref{theorem} is proved.
Thus we assume that either $\ell=0$ and $a>0$, or $\ell\neq 0$.
According to Theorem \ref{theorem:main3}, we have
\[
	C(\mathbb{T}^{2n},\mathbb{T}^n;R,u,\ell,c-u\ell)=\max\{R\lvert\ell\rvert+u\ell,c\}=c.
\]
It implies that for all $H\in\mathcal{H}_c(A_R\times \mathbb{T}^{2n},L_u\times \mathbb{T}^n)$
there exists $x\in\mathcal{P}(H;(\alpha_{\ell},0_{\mathbb{T}^{2n}}))$ such that $\mathcal{A}_H(x)\geq c-u\ell$.
Moreover, by Theorem \ref{theorem:main2},
if $H$ is non-degenerate, then the number of such $x$'s is at least
\[
	\sum_{k=0}^{n+1}b_k(\mathbb{T}^{n+1})=b_{\mathbb{T}^{n+1}}.\qedhere
\]
\end{proof}


\subsection*{Acknowledgement}

The authors would like to express their sincere gratitude to their advisor Professor Takashi Tsuboi for his practical advice
and to the referee for very important remarks.
They also thank Satoshi Sugiyama for giving them the trigger to study the present topic.
They thank Hiroyuki Ishiguro for pointing out many careless mistakes in a previous draft, too.
The second author thanks Natsumi Magome for drawing beautiful 3D graphics.
This work was supported by IBS-R003-D1 (the first author), JSPS KAKENHI Grant Numbers 25-6631 (the first author), 26-7057 (the second author)
and the Program for Leading Graduate Schools, MEXT, Japan.
The authors were supported by the Grant-in-Aid for JSPS fellows.


\bibliographystyle{amsart}

\end{document}